\newcommand{\Dchaintwo}[4]{
\rule[-3\unitlength]{0pt}{8\unitlength}
\begin{picture}(14,5)(0,3)
\put(1,2){\ifthenelse{\equal{#1}{l}}{\circle*{2}}{\circle{2}}}
\put(2,2){\line(1,0){10}}
\put(13,2){\ifthenelse{\equal{#1}{r}}{\circle*{2}}{\circle{2}}}
\put(1,5){\makebox[0pt]{\scriptsize #2}}
\put(7,4){\makebox[0pt]{\scriptsize #3}}
\put(13,5){\makebox[0pt]{\scriptsize #4}}
\end{picture}}
\newcommand{\Hexagon}[7]{
\rule[-3\unitlength]{0pt}{8\unitlength}
\begin{picture}(30,20)(0,3)
\put(1,13){\ifthenelse{\equal{#1}{l}}{\circle*{2}}{\circle{2}}}
\put(2,14){\line(1,1){6}}
\put(9,21){\ifthenelse{\equal{#1}{l}}{\circle*{2}}{\circle{2}}}
\put(10,21){\line(1,0){8}}
\put(19,21){\ifthenelse{\equal{#1}{l}}{\circle*{2}}{\circle{2}}}
\put(20,20){\line(1,-1){6}}
\put(27,13){\ifthenelse{\equal{#1}{l}}{\circle*{2}}{\circle{2}}}
\put(26,12){\line(-1,-1){6}}
\put(19,5){\ifthenelse{\equal{#1}{l}}{\circle*{2}}{\circle{2}}}
\put(18,5){\line(-1,0){8}}
\put(9,5){\ifthenelse{\equal{#1}{l}}{\circle*{2}}{\circle{2}}}
\put(8,6){\line(-1,1){6}}
\put(-3,12){\makebox[0pt]{\scriptsize #2}}
\put(9,23){\makebox[0pt]{\scriptsize #3}}
\put(19,23){\makebox[0pt]{\scriptsize #4}}
\put(31,12){\makebox[0pt]{\scriptsize #5}}
\put(19,1){\makebox[0pt]{\scriptsize #6}}
\put(9,1){\makebox[0pt]{\scriptsize #7}}
\end{picture}}
\newcommand{\THexagon}[7]{
\rule[-3\unitlength]{0pt}{8\unitlength}
\begin{picture}(30,20)(0,3)
\put(1,13){\ifthenelse{\equal{#1}{l}}{\circle*{2}}{\circle{2}}}
\put(2,14){\line(1,1){6}}
\put(9,21){\ifthenelse{\equal{#1}{l}}{\circle*{2}}{\circle{2}}}
\put(10,21){\line(1,0){8}}
\put(19,21){\ifthenelse{\equal{#1}{l}}{\circle*{2}}{\circle{2}}}
\put(20,20){\line(1,-1){6}}
\put(27,13){\ifthenelse{\equal{#1}{l}}{\circle*{2}}{\circle{2}}}
\put(26,12){\line(-1,-1){6}}
\put(19,5){\ifthenelse{\equal{#1}{l}}{\circle*{2}}{\circle{2}}}
\put(18,5){\line(-1,0){8}}
\put(9,5){\ifthenelse{\equal{#1}{l}}{\circle*{2}}{\circle{2}}}
\put(8,6){\line(-1,1){6}}
\put(2,13){\line(1,0){24}}
\put(10,21){\line(1,-2){8}}
\put(18,21){\line(-1,-2){8}}
\put(-3,12){\makebox[0pt]{\scriptsize #2}}
\put(9,23){\makebox[0pt]{\scriptsize #3}}
\put(19,23){\makebox[0pt]{\scriptsize #4}}
\put(31,12){\makebox[0pt]{\scriptsize #5}}
\put(19,1){\makebox[0pt]{\scriptsize #6}}
\put(9,1){\makebox[0pt]{\scriptsize #7}}
\end{picture}}
\newcommand{\Tchainfour}[5]{
\rule[-4\unitlength]{0pt}{5\unitlength}
\begin{picture}(20,20)(0,3)
\put(2,4){\circle{2}}
\put(3,4){\line(1,0){10}}
\put(3,5){\line(1,1){10.2}}
\put(13,15){\line(-1,-1){10.2}}
\put(2,5){\line(0,1){10}}
\put(14,4){\circle{2}}
\put(14,5){\line(0,1){10}}
\put(13,5){\line(-1,1){10.2}}
\put(3,15){\line(1,-1){10.2}}
\put(14,16){\circle{2}}
\put(2,16){\circle{2}}
\put(2,3){\makebox[0pt]{\scriptsize #1}}
\put(2,1){\makebox[0pt]{\scriptsize #2}}
\put(14,1){\makebox[0pt]{\scriptsize #3}}
\put(2,18){\makebox[0pt]{\scriptsize #4}}
\put(14,18){\makebox[0pt]{\scriptsize #5}}
\end{picture}}
\newcommand{\Dchainfour}[8]{
\rule[-3\unitlength]{0pt}{5\unitlength}
\begin{picture}(38,5)(0,3)
\put(1,2){\ifthenelse{\equal{#1}{1}}{\circle*{2}}{\circle{2}}}
\put(2,2){\line(1,0){10}}
\put(13,2){\ifthenelse{\equal{#1}{2}}{\circle*{2}}{\circle{2}}}
\put(14,2){\line(1,0){10}}
\put(25,2){\ifthenelse{\equal{#1}{3}}{\circle*{2}}{\circle{2}}}
\put(26,2){\line(1,0){10}}
\put(37,2){\ifthenelse{\equal{#1}{4}}{\circle*{2}}{\circle{2}}}
\put(1,5){\makebox[0pt]{\scriptsize #2}}
\put(7,4){\makebox[0pt]{\scriptsize #3}}
\put(13,5){\makebox[0pt]{\scriptsize #4}}
\put(19,4){\makebox[0pt]{\scriptsize #5}}
\put(25,5){\makebox[0pt]{\scriptsize #6}}
\put(31,4){\makebox[0pt]{\scriptsize #7}}
\put(37,5){\makebox[0pt]{\scriptsize #8}}
\end{picture}}
\newcommand{\Fchainfour}[5]{
\rule[-4\unitlength]{0pt}{5\unitlength}
\begin{picture}(20,20)(0,3)
\put(2,4){\circle{2}}
\put(3,4){\line(1,0){10}}
\put(3,5){\line(1,1){10.2}}
\put(13,15){\line(-1,-1){10.2}}
\put(2,5){\line(0,1){10}}
\put(14,4){\circle{2}}
\put(14,5){\line(0,1){10}}
\put(13,5){\line(-1,1){10.2}}
\put(3,15){\line(1,-1){10.2}}
\put(14,16){\circle{2}}
\put(13,16){\line(-1,0){10}}
\put(2,16){\circle{2}}
\put(2,3){\makebox[0pt]{\scriptsize #1}}
\put(2,1){\makebox[0pt]{\scriptsize #2}}
\put(14,1){\makebox[0pt]{\scriptsize #3}}
\put(2,18){\makebox[0pt]{\scriptsize #4}}
\put(14,18){\makebox[0pt]{\scriptsize #5}}
\end{picture}}
\newcommand{\Echainfour}[5]{
\rule[-4\unitlength]{0pt}{5\unitlength}
\begin{picture}(20,20)(0,3)
\put(2,4){\circle{2}}
\put(3,4){\line(1,0){10}}
\put(2,5){\line(0,1){10}}
\put(14,4){\circle{2}}
\put(14,5){\line(0,1){10}}
\put(14,16){\circle{2}}
\put(13,16){\line(-1,0){10}}
\put(2,16){\circle{2}}
\put(2,3){\makebox[0pt]{\scriptsize #1}}
\put(2,1){\makebox[0pt]{\scriptsize #2}}
\put(14,1){\makebox[0pt]{\scriptsize #3}}
\put(2,18){\makebox[0pt]{\scriptsize #4}}
\put(14,18){\makebox[0pt]{\scriptsize #5}}
\end{picture}}
\def \To{\longrightarrow}
\def \dim{\operatorname{dim}}
\def \deg{\operatorname{deg}}
\def \D{\operatorname{d}}
\def \gr{\operatorname{gr}}
\def \ad{\operatorname{ad}}
\def \Vec{\operatorname{Vec}}
\def \comod{\operatorname{comod}}
\def \N{\mathbb{N}}
\def \D{\Delta}
\def \e{\varepsilon}
\def \Z{\mathbb{Z}}
\def \k{\mathbbm{k}}
\def \1{\mathbf{1}}
\def \id{\operatorname{id}}
\numberwithin{equation}{section}
\newtheorem{theorem}{Theorem}[section]
\newtheorem{lemma}[theorem]{Lemma}
\newtheorem{proposition}[theorem]{Proposition}
\newtheorem{corollary}[theorem]{Corollary}
\newtheorem{definition}[theorem]{Definition}
\newtheorem{example}[theorem]{Example}
\newtheorem{remark}[theorem]{Remark}
\newtheorem{conjecture}[theorem]{Conjeture}
\begin{document}

\title[Finite Quasi-Quantum Groups over Abelian Groups]{On the Classification of Finite Quasi-Quantum Groups over Abelian Groups}

\author{Hua-Lin Huang, Gongxiang Liu, Yuping Yang*, and Yu Ye}
\thanks{*Corresponding author}

\address{School of Mathematical Sciences, Huaqiao University, Quanzhou 362021, China} \email{hualin.huang@hqu.edu.cn}

\address{Department of Mathematics, Nanjing University, Nanjing 210093, China}
\email{gxliu@nju.edu.cn}

\address{School of Mathematics and statistics, Southwest University, Chongqing 400715, China}
\email{yupingyang@swu.edu.cn}

\address{\begin{minipage}{14.5cm}{School of Mathematical Sciences, Wu Wen-Tsun Key Laboratory of Mathematics, University of Science and Technology of China, Hefei, Anhui 230026, CHINA 
			\\
			and 
			\\
			Hefei National Laboratory, University of Science and Technology of China, Hefei 230088, CHINA}\end{minipage}}
\email{yeyu@ustc.edu.cn}

\subjclass[2010]{19A22, 18D10, 16G20}
\keywords{quasi-Hopf algebra,  quantum group, pointed tensor category}
\date{}
\maketitle

\begin{abstract} 
Using a variety of methods developed in the theory of finite-dimensional quasi-Hopf algebras, we classify all finite-dimensional coradically graded  pointed coquasi-Hopf algebras over abelian groups.  As a consequence, we partially confirm the generation conjecture of pointed finite tensor categories due to Etingof, Gelaki, Nikshych and Ostrik.
\end{abstract}

\section{Introduction}
As a continuation to a series of previous works \cite{HLYY, HLYY2, HYZ}, this paper completes the classification problem of finite-dimensional coradically graded  pointed coquasi-Hopf algebras over abelian groups. Throughout, we work over an algebraically closed field $\k$ of characteristic zero. Unless stated otherwise, in this paper all spaces, maps, (co)algebras, (co)modules, and categories, etc., are over $\k.$

The classification of finite-dimensional pointed Hopf algebras over finite abelian groups was completed over the last two decades, and a systematic approach (in particular Weyl groupoids and arithmetic root systems) was established, see \cite{AS2,A2,A3, H0,H4}.  Meanwhile, Etingof and Gelaki proposed to classify pointed finite tensor categories. By the Tannakian formalism \cite{EGNO}, this amounts to a classification of certain finite quasi-quantum groups, namely finite-dimensional elementary quasi-Hopf algebras, or dually finite-dimensional pointed coquasi-Hopf algebras. In the pioneering works \cite{EG1,EG2, EG3,G, A}, a few examples and classification results of such algebras, and consequently the associated pointed finite tensor categories, are thus obtained.
In \cite{HLYY, HLYY2, HYZ}, the authors of the present paper continue the study of the classification problem of finite quasi-quantum groups and several interesting classification results are also obtained. To explain this and our main result of this paper, we need some concrete notations. 

Once and for all, let $G$ be a finite abelian group and $\Phi$ be a $3$-cocycle on $G.$ Based on an analog of the lifting method in the theory of finite-dimensional pointed Hopf algebras, a complete understanding of the Nichols algebras in the twisted Yetter-Drinfeld module category $_{\k G}^{\k G}\mathcal{Y}\mathcal{D}^{\Phi}$ is the crux for the classification of finite-dimensional pointed coquasi-Hopf algebras. A twisted Yetter-Drinfeld module $V \in {_{\k G}^{\k G}\mathcal{Y}\mathcal{D}^{\Phi}}$ is said to be of diagonal type, if it is a direct sum of $1$-dimensional twisted Yetter-Drinfeld modules. The associated Nichols algebra $B(V)$ is called diagonal if $V$ is so. Let $H$ be a pointed coquasi-Hopf algebra over $G$,  and $\gr(H)$ the coradically graded coquasi-Hopf algebra associated to $H$. The coinvariant subalgebra $R$ of $\gr(H)$ will be a twisted Yetter-Drinfeld module in $_{\k G}^{\k G}\mathcal{Y}\mathcal{D}^{\Phi}$ for certain $\Phi$, and $H$ is called diagonal if $R$ is diagonal as a twisted Yetter-Drinfeld module.
 In  \cite{HLYY, HLYY2}, we classified all finite-dimensional Nichols algebras of diagonal type in  ${_{\k G}^{\k G}\mathcal{Y}\mathcal{D}^{\Phi}}$ and proved that every finite-dimensional pointed coquasi-Hopf algebra of diagonal type must be the form of $B(V)\#\k G$. 
 
The aim of this paper is to study Nichols algebras of nondiagonal type and the main result is the following (see Theorem \ref{t3.1} for an equivalent form) .

{\bf Theorem 0.1}.
Let $B(V)\in\ _{\k G}^{\k G} \mathcal{YD}^\Phi$ be a Nichols algebra of nondiagonal type with  $G_V=G$. Then $B(V)$ is infinite dimensional.

Here $G_V$ is the support group of $V$ (see the paragraph after Definition \ref{d2.9}). Under assumption that $G_V=G$, which is natural for us since the braided Hopf algebra structure of $B(V)$ is determined by $G_V$ rather than $G$ (see the paragraph after Proposition \ref{p2.6}), our result tells us that every finite-dimensional Nichols algebra $B(V)\in\ _{\k G}^{\k G} \mathcal{YD}^\Phi$ must be of diagonal type finally, which already was classified in our previous works. As consequences, we can get the structure of general finite-dimensional pointed coquasi-Hopf algebras now (see Theorems \ref{t9.1} and \ref{t5.2}).

{\bf Theorem 0.2}. If $M$ is a  finite-dimensional  pointed coquasi-Hopf algebra over finite abelian group $G$, then $\gr(M)\cong B(V)\# \k G$ for a Yetter-Drinfeld module of finite type $V\in\ {_{\k G}^{\k G} \mathcal{YD}^\Phi}$.

In \cite{EGNO}, Etingof, Gelaki, Nikshych and Ostrik conjecture that every pointed finite tensor category over a field of characteristic zero is tensor generated by objects of length $2$. Let $G(\mathcal{C})$ be the set of isomorphism classes of simple objects in a pointed tensor category $\mathcal{C}$. Then $G(\mathcal{C})$ is naturally a group under tensor product. Using our classification result and some useful result in \cite{HLYY2}, we can partially prove the conjecture (see Theorem \ref{t5.5}).

{\bf Theorem 0.3}. Let $\mathcal{C}$ be a pointed finite tensor category over a field of characteristic zero. If the group $G(\mathcal{C})$ is abelian, then $\mathcal{C}$ is tensor generated by objects of length $2$. 

The paper is organized as follows. In Section 2, we recall some necessary notions and particularly introduce a method to study Nichols algebras in  ${_{\k G}^{\k G} \mathcal{YD}^\Phi}$ called change of based groups. Sections 3 and 4 are designed to give a proof of the above Theorem 0.1. The last section is devoted to the classification of  finite-dimensional pointed coquasi-Hopf algebras and the generation problem of pointed finite tensor categories.

\section{Preliminaries}
In this section, we recall some necessary notions and basic facts about pointed coquasi-Hopf algebras, twisted Yetter-Drinfeld modules and Nichols algebras. The reader is referred to \cite{EGNO, HLYY, HLYY2} for related concepts and notations.

\subsection{Pointed coquasi-Hopf algebras} 
A coquasi-Hopf algebra is a coalgebra $(H,\D,\e)$ equipped with a
compatible quasi-algebra structure and a quasi-antipode. Namely,
there exist two coalgebra homomorphisms $$m\colon H \otimes H \To H, \ a
\otimes b \mapsto ab \quad \text{and} \quad \mu\colon \k \To H,\ \lambda \mapsto \lambda
1_H,$$ a convolution-invertible map $\Phi\colon H^{\otimes 3} \To \k$
called the \emph{associator}, a coalgebra antimorphism $S\colon H \To H$ and two
functions $\alpha,\beta\colon H \To \k$ such that for all $a,b,c,d \in
H$ the following equalities hold:
\begin{eqnarray*}
&a_1(b_1c_1)\Phi(a_2,b_2,c_2)=\Phi(a_1,b_1,c_1)(a_2b_2)c_2,\\
&1_H a=a=a1_H, \\
&\Phi(a_1,b_1,c_1d_1)\Phi(a_2b_2,c_2,d_2) =\Phi(b_1,c_1,d_1)\Phi(a_1,b_2c_2,d_2)\Phi(a_2,b_3,c_3),\\
&\Phi(a,1_H,b)=\e(a)\e(b). \\
&S(a_1)\alpha(a_2)a_3=\alpha(a)1_H, \quad a_1\beta(a_2)S(a_3)=\beta(a)1_H, \label{2.11} \\
&\Phi(a_1,S(a_3),a_5)\beta(a_2)\alpha(a_4)
=\Phi^{-1}(S(a_1),a_3, S(a_5)) \alpha(a_2)\beta(a_4)=\e(a).
 \end{eqnarray*}
The triple $(S,\alpha,\beta)$ is called a quasi-antipode. $H$ is called a {\bf pointed coquasi-Hopf algebra} if $(H,\D,\e)$ is a pointed coalgebra, i.e., every simple comodule of $H$ is $1$-dimensional.

Let $C$ be a coalgebra, the coradical $C_0$ of $C$ is the sum of all simple subcoalgebras of $C$. Fix a coalgebra $C$ with coradical $C_0$, define $C_n$ inductively as follows: for each $n\geq 1$, define $$C_n=\D^{-1}(C\otimes C_{n-1}+C_0\otimes C).$$
Then we get a filtration $C_0\subset C_1\subset \cdots C_n\subset \cdots, $ which is called the \emph{coradical filtration} of $C$. A coquasi-Hopf algebra has a coradical filtration since it is a coalgebra.

Given a coquasi-Hopf algebra $(H,\D, \e,m, \mu, \Phi,S,\alpha,\beta),$ let $\{H_n\}_{n \ge 0}$ be its coradical filtration, and let $$\gr H = H_0 \oplus H_1/H_0 \oplus H_2/H_1
\oplus \cdots,$$ the corresponding coradically graded coalgebra. Then naturally
$\gr H$ inherits from $H$ a graded coquasi-Hopf algebra structure. The
corresponding graded associator $\gr\Phi$ satisfies
$\gr\Phi(\bar{a},\bar{b},\bar{c})=0$ for all homogeneous elements
$\bar{a},\bar{b},\bar{c} \in \gr H$ unless they all lie in $H_0.$
Similar conditions hold for $\gr\alpha$ and $\gr\beta.$ A coquasi-Hopf algebra $H$ is called {\bf coradically graded} if $H\cong \gr(H)$ as coquasi-Hopf algebras.

If $H$ is a pointed coquasi-Hopf algebra, then $H_0$ is a pointed cosemisimple coquasi-Hopf algebra, which is determined by a group $G$ together with a $3$-cocycle on $G$ as follows. 

\begin{example}
Let $G$ be a group. Clearly the group algebra $\k G$ is a Hopf algebra with $\D(g)=g\otimes g,\; S(g)=g^{-1}$ and
$\e(g)=1$ for any $g\in G$. Let $\omega$ be a normalized $3$-cocycle on $G$, i.e.
\begin{eqnarray}
&\omega(ef,g,h)\omega(e,f,gh)=\omega(e,f,g)\omega(e,fg,h)\omega(f,g,h),\\
&\omega(f,1,g)=1
\end{eqnarray}
for all $e,f,g,h \in G$. By linearly extending, $\omega \colon (\k G)^{\otimes 3} \to \k$ becomes a
 convolution-invertible map. Define two linear functions
 $\alpha,\beta \colon \k G \to \k$ by \[ \alpha(g):=\e(g) \quad \text{and} \quad \beta(g):=\frac{1}{\omega(g,g^{-1},g)} \]
 for any $g\in G$. Then $kG$ together with these $\omega, \ \alpha$ and $\beta$
makes a coquasi-Hopf algebra, which will be written as $(\k G,\omega)$ in the following. The comodule category of $(\k G,\omega)$ forms a tensor category, which is called a Gr-category and denoted by $\Vec_G^\omega$.
\end{example}

Let us now consider the construction of Gr-categories which will be especially important in this paper. The crux to determine all the Gr-categories is to give a complete list of the representatives of the $3$-cohomology classes in $\operatorname{H}^3(G,\k^*)$ for all groups $G.$ However, when $G$ is a finite abelian group, the problem was solved in \cite{HLYY2}, and a list of the representatives of $\operatorname{H}^3(G,\k^*)$ can be given as follows.

Let $\N$ denote the set of nonnegative integers, $\Z$ the ring of integers, and $\Z_m$ the cyclic group of order $m.$ Any finite abelian group $G$ is of the form $\mathbb{Z}_{m_{1}}\times\cdots \times\mathbb{Z}_{m_{n}}$ with $m_j\in \mathbb{N}$
for $1\leq j\leq n.$  Denote by $\mathcal{A}$ the set of all $\N$-sequences
\begin{equation}\label{cs}(c_{1},\ldots,c_{l},\ldots,c_{n},c_{12},\ldots,c_{ij},\ldots,c_{n-1,n},c_{123},
\ldots,c_{rst},\ldots,c_{n-2,n-1,n})\end{equation}
such that $ 0\leq c_{l}<m_{l}, \ 0\leq c_{ij}<(m_{i},m_{j}), \ 0\leq c_{rst}<(m_{r},m_{s},m_{t})$ for $1\leq l\leq n, \ 1\leq i<j\leq n, \ 1\leq r<s<t\leq n$, where $c_{ij}$ and $c_{rst}$ are ordered in the lexicographic order of their indices. We denote by $\underline{\mathbf{c}}$ the sequence \eqref{cs}  in the following. Let $g_i$ be a generator of $\mathbb{Z}_{m_{i}}, 1\leq i\leq n$. For any $\underline{\mathbf{c}}\in \mathcal{A}$, define
\begin{eqnarray}\label{eq2.4}
&& \omega_{\underline{\mathbf{c}}}\colon G\times G\times G\To \k^{\ast} \notag \\
&&[g_{1}^{i_{1}}\cdots g_{n}^{i_{n}},g_{1}^{j_{1}}\cdots g_{n}^{j_{n}},g_{1}^{k_{1}}\cdots g_{n}^{k_{n}}] \mapsto \\ && \prod_{l=1}^{n}\zeta_{m_l}^{c_{l}i_{l}[\frac{j_{l}+k_{l}}{m_{l}}]}
\prod_{1\leq s<t\leq n}\zeta_{m_{t}}^{c_{st}i_{t}[\frac{j_{s}+k_{s}}{m_{s}}]}
\prod_{1\leq r<s<t\leq n}\zeta_{(m_{r},m_{s},m_{t})}^{c_{rst}i_{r}j_{s}k_{t}}. \notag
\end{eqnarray}
Here and below $\zeta_m$ stands for an $m$-th primitive root of unity.
According to \cite[Proposition 3.8]{HLYY2},   $\{\omega_{\underline{\mathbf{c}}} \mid \underline{\mathbf{c}}\in \mathcal{A}\}$ forms a complete set of representatives of the normalized $3$-cocycles on $G$ up to $3$-cohomology.

\subsection{Nichols algebras of Yetter-Drinfeld modules}
Nichols algebras are very important for the construction of pointed coquasi-Hopf algebras. For our purpose, we are mainly concerned with the Nichols algebras in the Yetter-Drinfeld module category of the coquasi-Hopf algebra $(\k G,\Phi)$, where $G$ is a finite abelian group and $\Phi$ is a normalized 3-cocycle on $G$. To emphasize $\Phi,$ we denote the Yetter-Drinfeld category of $(\k G,\Phi)$ as $_{\k G}^{\k G}\mathcal{Y}\mathcal{D}^{\Phi}$. For convenience, we call an object in  $_{\k G}^{\k G}\mathcal{Y}\mathcal{D}^{\Phi}$ a twisted Yetter-Drinfeld module.  Define
\begin{equation}\label{e2.5}
\widetilde{\Phi}_g(x,y)=\frac{\Phi(g,x,y)\Phi(x,y,g)}{\Phi(x,g,y)}
\end{equation}
 for all $g,x,y\in G$. By direct computation one can show that $\widetilde{\Phi}_g$ is a 2-cocycle on $G$. The construction of category of twisted Yetter-Drinfeld modules can be summarized as follows, the detailed computations can be found in \cite{HLYY2,HY}.

\begin{definition}
%A vector space $V$ is an object in $_{\k G}^{\k G}\mathcal{Y}\mathcal{D}^{\Phi}$ if $V=\oplus_{g\in G}V_g$\emph{ (}$V_g=\{v\in V|\rho(v)=g\otimes v\}$\emph{)} with each $V_g$ a projective
%$G$-representation with respect to the 2-cocycle $\widetilde{\Phi}_g,$ namely for any $e,f\in G, v\in V_g$ we have 
%\begin{equation}\label{eq2.6}
%	e\triangleright(f\triangleright v)=\widetilde{\Phi}_g(e,f) (ef)\triangleright v.
%\end{equation}
%The tensor product $V_g\otimes V_h$ is determined by
%\begin{equation}\label{n3.8}
%	e\triangleright (X\otimes Y)=(\widetilde{\Phi}_e(g,h)e\triangleright X)\otimes (e\triangleright Y), \ X\in V_g, \ Y\in V_h.
%\end{equation}
%The associativity and the braiding constraints of $_{\k G}^{\k G}\mathcal{Y}\mathcal{D}^{\Phi}$ are given respectively by
%\begin{eqnarray}
%	&\ a_{V_e,V_f,V_g}((X\otimes Y)\otimes Z) =\Phi(e,f,g)^{-1} X\otimes (Y\otimes Z )\\
%	&R(X\otimes Y)=e \triangleright Y\otimes X
%\end{eqnarray}
%for all $X\in V_e,\  Y\in V_f,\  Z \in V_g.$	
An object in $_{\k G}^{\k G}\mathcal{Y}\mathcal{D}^{\Phi}$ is a  $G$-graded vector space $V=\oplus_{g\in G}V_g$ \emph{ (}$V_g=\{v\in V|\rho(v)=g\otimes v\}$ as a $kG$-comodule\emph{)} with each $V_g$ a projective
$G$-representation with respect to the 2-cocycle $\widetilde{\Phi}_g,$ namely for any $e,f\in G, v\in V_g$ we have 
\begin{equation}\label{eq2.6}
e\triangleright(f\triangleright v)=\widetilde{\Phi}_g(e,f) (ef)\triangleright v.
\end{equation}
The module structure on the tensor product $V_g\otimes V_h$ is determined by
\begin{equation}\label{n3.8}
e\triangleright (X\otimes Y)=\widetilde{\Phi}_e(g,h)(e\triangleright X)\otimes (e\triangleright Y), \ X\in V_g, \ Y\in V_h.
\end{equation}
The associativity and the braiding constraints of $_{\k G}^{\k G}\mathcal{Y}\mathcal{D}^{\Phi}$ are given respectively by
\begin{eqnarray}
&\ a_{V_e,V_f,V_g}((X\otimes Y)\otimes Z) =\Phi(e,f,g)^{-1} X\otimes (Y\otimes Z )\\
&R(X\otimes Y)=e \triangleright Y\otimes X
\end{eqnarray}
for all $X\in V_e,\  Y\in V_f,\  Z \in V_g.$
\end{definition}

 Let $\Phi$ be a $3$-cocycle on $G$ as given in \eqref{eq2.4}.
One can verify directly that
\begin{equation}
\widetilde{\Phi}_g\widetilde{\Phi}_h=\widetilde{\Phi}_{gh},\  \forall g,h\in G.
\end{equation}
Suppose $V_g$ is $(G,\widetilde{\Phi}_g)$-representation, $V_h$ is a $(G,\widetilde{\Phi}_h)$-representation, then $V_g\otimes V_h$ is a $(G,\widetilde{\Phi}_{gh})$-representation.
In particular, the dual object $V_g^*$ of $V_g$ is a $(G,\widetilde{\Phi}_{g^{-1}})$-representation and $(V_g^*)^*=V_g$, see \cite[Proposition 2.5]{HYZ} for details. 

A Yetter-Drinfeld module $V\in {^{\k G}_{\k G}\mathcal{Y}\mathcal{D}^\Phi}$ is called diagonal if $V$ is direct sum of $1$-dimensional twisted Yetter-Drinfeld modules. For a simple Yetter-Drinfeld module $V$ in ${_{\k G}^{\k G}\mathcal{Y}\mathcal{D}^{\Phi}}$, there exists some $g \in G$ such that $V=V_g$ and we define $g_V:=g$ in this case. Recall that a $2$-cocycle $\varphi$ on $G$ is called symmetric if $\varphi(g,h)=\varphi(h,g)$ for all $h,g\in G$. By \eqref{eq2.6},  it is not hard to show that a simple Yetter-Drinfeld module $V$ with $g_V=g$ is $1$-dimensional if and only if $\widetilde{\Phi}_g$ is symmetric. 

Let $V$ be a nonzero object in $^{\k G}_{\k G}\mathcal{Y}\mathcal{D}^\Phi.$ By $T(V)$ we denote the tensor algebra in $^{\k G}_{ \k G}\mathcal{Y}\mathcal{D}^\Phi$ generated freely by $V.$ It is clear that $T(V)$ is isomorphic to $\bigoplus_{n \geq 0}V^{\otimes \overrightarrow{n}}$ as a linear space, where $V^{\otimes \overrightarrow{n}}$ means
$\underbrace{(\cdots((}_{n-1}V\otimes V)\otimes V)\cdots \otimes V).$ This induces a natural $\mathbb{N}$-graded structure on $T(V).$ Define a comultiplication on $T(V)$ by $\Delta(X)=X\otimes 1+1\otimes X, \ \forall X \in V,$ a counit by $\varepsilon(X)=0,$ and an antipode by $S(X)=-X.$ These provide a graded Hopf algebra structure on $T(V)$ in the braided tensor category $^{\k G}_{\k G}\mathcal{Y}\mathcal{D}^\Phi.$

\begin{definition}\label{d2.9}
The Nichols algebra $B(V)$ of $V$ is the quotient Hopf algebra $T(V)/I$ in $^{\k G}_{\k G}\mathcal{Y}\mathcal{D}^\Phi,$ where $I$ is the unique maximal graded Hopf ideal contained in $\bigoplus_{n \geq 2}V^{\otimes \overrightarrow{n}}$.
\end{definition}

A Nichols algebra $B(V)$ is called {\bf of diagonal type} if $V$ is diagonal. Suppose $V=\bigoplus_{i=1}^n V_i\in {^{\k G}_{\k G}\mathcal{Y}\mathcal{D}^\Phi}$ is direct sum of simple objects, then we will say that the rank of $B(V)$ is $n$. According to \cite[Proposition 3.1]{HYZ}, $B(V)$ is a $\mathbb{Z}^n$-graded algebra with  $\deg V_i=e_i$, where $\{e_i:1\leq i\leq n\}$ is a set of free generators of $\mathbb{Z}^n$. 
For $V\in {^{\k G}_{\k G}\mathcal{Y}\mathcal{D}^\Phi}$, we will call $G$ the {\bf based group} of $V$ and $B(V)$. Let $V=V_1\oplus V_2\oplus \cdots \oplus V_n$ be direct sum of simple Yetter-Drinfeld modules in ${^{\k G}_{\k G}\mathcal{Y}\mathcal{D}^\Phi}$, $g_i$ the degree of $V_i$ for $1\leq i\leq n$, the subgroup $G_V:=\langle g_1,g_2,\cdots g_n\rangle$ will be called the {\bf support group} of $V$. 

Next we will recall the definition of the twisting of a Nichols algebra through a 2-cochain of $G$. Let $(V,\triangleright, \delta_{L})\in {^{\k G}_{\k G}\mathcal{YD}^\Phi}$,  and let $J$ be a $2$-cochain of $G$. There is a new action $\triangleright_J$ of $G$ on $V$ determined by
\begin{equation}\label{3.4}
g\triangleright_J X=\frac{J(g,x)}{J(x,g)}g\triangleright X
\end{equation}
 for homogenous element $X\in V$ and $g\in G.$ Here $x=\deg(X)$ is the $G$-degree of $X$.  We denote $(V, \triangleright_{J},\delta_L)$ by $V^{J}$, and one can verify that $V^J\in {^{\k G}_{\k G}\mathcal{Y}\mathcal{D}^{\Phi\ast \partial(J)}}.$ Moreover there is a tensor equivalence $$(F_J,\varphi_0,\varphi_2)\colon\ ^{\k G}_{\k G}\mathcal{Y}\mathcal{D}^\Phi \to {^{\k G}_{\k G}\mathcal{Y}\mathcal{D}^{\Phi\ast\partial (J)}},$$ which takes $V$ to $V^J$ and $$\varphi_2(U,V)\colon\ (U\otimes V)^J\to U^J\otimes V^J,\ \ Y\otimes Z\mapsto J(y,z)^{-1}Y\otimes Z$$ for $Y\in U,\  Z\in V.$

Let $B(V)$ be a Nichols algebra in $^{\k G}_{\k G}\mathcal{Y}\mathcal{D}^\Phi.$ Then $B(V)^J$ is a Hopf algebra in $^{\k G}_{\k G}\mathcal{Y}\mathcal{D}^{\Phi\ast \partial J}$ with multiplication $\circ$ determined by
\begin{equation}\label{eq2.14}
X\circ Y=J(x,y)XY
\end{equation} for all homogenous elements $X,Y\in B(V),$  $x=\deg X, \ y=\deg Y$. Using the same terminology as for coquasi-Hopf algebras, we say that $B(V)$ and $B(V)^{J}$ are twist equivalent, or $B(V)^{J}$ is a twisting of $B(V)^{J}$.
\begin{lemma}\cite[Lemma 2.12]{HLYY2}\label{l2.6}
The twisting $B(V)^J$ of $B(V)$ is a Nichols algebra in $^{\k G}_{\k G}\mathcal{Y}\mathcal{D}^{\Phi\ast \partial J}$ and $B(V)^J\cong B(V^J)$.
\end{lemma}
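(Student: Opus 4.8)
The plan is to realize $B(V)^J$ as the image of $B(V)$ under the tensor equivalence $(F_J,\varphi_0,\varphi_2)$ and then to invoke the general principle that a braided tensor equivalence carries Nichols algebras to Nichols algebras.

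First I would transport the Hopf algebra structure of $B(V)$ along $F_J$. For a monoidal functor the image $F_J(B(V))$ acquires the multiplication
\[
F_J(B(V))\otimes F_J(B(V)) \xrightarrow{\ \varphi_2^{-1}\ } F_J(B(V)\otimes B(V)) \xrightarrow{\ F_J(m)\ } F_J(B(V)).
\]
Since $F_J$ is the identity on the underlying graded comodules and $\varphi_2^{-1}$ sends a homogeneous $Y\otimes Z$ to $J(y,z)\,Y\otimes Z$, this composite sends $X\otimes Y$ to $J(x,y)XY$, which is exactly the product $\circ$ of \eqref{eq2.14}; dually, post-composing the coproduct with $\varphi_2$ reproduces the twisted comultiplication. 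Thus, as a Hopf algebra in $^{\k G}_{\k G}\mathcal{Y}\mathcal{D}^{\Phi\ast\partial J}$, one has $B(V)^J=F_J(B(V))$.

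Next I would use that $F_J$ is an equivalence to identify $F_J(B(V))$ with the Nichols algebra of $F_J(V)=V^J$. A braided tensor functor commutes with the free tensor algebra, so $F_J(T(V))\cong T(V^J)$ compatibly with the $\N$-grading and the braided Hopf structure; being an equivalence, $F_J$ moreover gives an inclusion-preserving bijection between graded Hopf ideals and preserves each homogeneous component, in particular the subobject $\bigoplus_{n\geq 2}V^{\otimes\overrightarrow{n}}$. Hence $F_J$ carries the unique maximal graded Hopf ideal $I$ of Definition \ref{d2.9} to the corresponding maximal ideal for $T(V^J)$, whence $F_J(B(V))=F_J(T(V)/I)\cong T(V^J)/F_J(I)=B(V^J)$. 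Combined with the previous step this yields $B(V)^J\cong B(V^J)$, and in particular $B(V)^J$ is a Nichols algebra in $^{\k G}_{\k G}\mathcal{Y}\mathcal{D}^{\Phi\ast\partial J}$.

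The hard part will be justifying that $F_J$ genuinely respects the data on which the Nichols algebra depends, namely the braiding. The comultiplication on $T(V)$ and the maximality condition defining $I$ are built from the braiding $R(X\otimes Y)=e\triangleright Y\otimes X$, so one must check that $\varphi_2$ intertwines the braidings of the source and target categories --- equivalently, that replacing $\triangleright$ by the twisted action $\triangleright_J$ of \eqref{3.4} alters $R$ exactly by the coboundary $\partial J$ needed to land in $^{\k G}_{\k G}\mathcal{Y}\mathcal{D}^{\Phi\ast\partial J}$. Once this braided compatibility is verified, preservation of Nichols algebras is formal and the remaining cochain bookkeeping collapses to the one-line computation above.
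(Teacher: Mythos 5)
Your proposal is correct in outline, but be aware that the paper you were asked to match contains no proof of this statement at all: Lemma \ref{l2.6} is quoted from \cite[Lemma 2.12]{HLYY2}, and the source's argument is the more elementary one, namely a direct verification via the characterization of Nichols algebras. Since $J$ is a normalized $2$-cochain, twisting only rescales homogeneous components of the product and coproduct by nonzero scalars ($X\circ Y=J(x,y)XY$, and dually $J^{-1}$ on the coproduct), so $B(V)^J$ is still a graded braided Hopf algebra with degree-zero part $\k$, is still generated by its degree-one part $V^J$, and has unchanged space of primitives (the terms $X\otimes 1$ and $1\otimes X$ are unaffected because $J$ is normalized); these three properties force $B(V)^J\cong B(V^J)$. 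Your route instead transports $B(V)$ along the equivalence $(F_J,\varphi_0,\varphi_2)$ and invokes preservation of Nichols algebras under braided tensor equivalences; this is a genuinely different and valid packaging, and it buys generality, but it hinges on exactly the point you defer: that $F_J$ is \emph{braided}, not merely tensor (the paper only asserts a tensor equivalence). That check is not hard, and you should carry it out rather than leave it as a promissory note: for $X\in U_e$, $Y\in V_f$, one path gives $\varphi_2\bigl(F_J(R)(X\otimes Y)\bigr)=\varphi_2\bigl((e\triangleright Y)\otimes X\bigr)=J(f,e)^{-1}(e\triangleright Y)\otimes X$, while the other gives $R'\bigl(\varphi_2(X\otimes Y)\bigr)=J(e,f)^{-1}\,e\triangleright_J Y\otimes X=J(e,f)^{-1}\tfrac{J(e,f)}{J(f,e)}(e\triangleright Y)\otimes X$, and the two agree. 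With this one-line computation inserted (and the remark that $J$ is taken normalized, so $\varphi_0$ is trivial and unit and counit are preserved), your functorial argument is complete; the remaining steps (an equivalence commutes with the tensor algebra and induces a bijection on graded Hopf ideals, hence matches the maximal ones) are formal, as you say.
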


\subsection{Reduction}

The study of Nichols algebras in $^{\k G}_{\k G}\mathcal{Y}\mathcal{D}^\Phi$ is deeply related on the $3$-cocycle $\Phi$ on $G$. Recall that a $3$-cocycle $\Phi$ on $G$ is called an {\bf abelian $3$-cocycle} if $^{\k G}_{\k G}\mathcal{Y}\mathcal{D}^\Phi$ is pointed, i.e. each simple object of $^{\k G}_{\k G}\mathcal{Y}\mathcal{D}^\Phi$ is $1$-dimensional.  
A key observation in \cite{HLYY2} is that every Nichols algebra in $^{\k G}_{\k G}\mathcal{Y}\mathcal{D}^\Phi$ is twist equivalent to a Nichols algebra in a normal Yetter-Drinfeld category when $\Phi$ is an abelian $3$-cocycle.
Suppose $G=\Z_{m_1}\times \Z_{m_2}\times \cdots\times \Z_{m_n}$, $e_i$ is a generator of $\Z_{m_i}$ for all $1\leq i\leq n$, and $\Phi$ is an abelian $3$-cocycle on $G.$  Then up to cohomology $\Phi$ must be of the form
\begin{equation}\label{eq2.13}
\Phi(e_1^{i_1}\cdots e_n^{i_n},e_1^{j_1}\cdots e_n^{j_n},e_1^{k_1}\cdots e_N^{k_n})
=\prod_{l=1}^n\zeta_l^{c_li_l[\frac{j_l+k_l}{m_l}]}\prod_{1\leq s<t\leq n}\zeta_{m_t}^{c_{st}i_t[\frac{j_s+k_s}{m_s}]}.
\end{equation}
The folowing lemma follows immediately.
\begin{lemma}\label{l2.5}
\begin{itemize}
\item[(1).] Each $3$-cocycle of a finite cyclic group or direct sum of two finite cyclic groups is abelian;
\item[(2).] Let $B(V)\in {^{\k G}_{\k G}\mathcal{Y}\mathcal{D}^\Phi}$ be a Nichols algebra of rank $1$ or rank $2$, then $\Phi_{G_V}$ is an abelian $3$-cocycle on $G_V$.
\end{itemize}
\end{lemma}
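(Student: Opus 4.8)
The plan is to reduce both parts to the symmetry criterion recorded just before the lemma: a simple object $V$ with $g_V=g$ is $1$-dimensional precisely when the $2$-cocycle $\widetilde{\Phi}_g$ of \eqref{e2.5} is symmetric. Since $\Phi$ is abelian exactly when every simple object of $^{\k G}_{\k G}\mathcal{Y}\mathcal{D}^\Phi$ is $1$-dimensional, and since every simple object is supported at a single group element, part (1) amounts to proving $\widetilde{\Phi}_g(x,y)=\widetilde{\Phi}_g(y,x)$ for all $g,x,y$ when $G$ is cyclic or a product of two cyclic groups. Part (2) will then be deduced from part (1).

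First I would show that this symmetry depends only on the cohomology class of $\Phi$, so that it is enough to test it on the representatives $\omega_{\underline{\mathbf{c}}}$ of \eqref{eq2.4}. Writing $\Phi'=\Phi\ast\partial(J)$ for a $2$-cochain $J$ and using that $\widetilde{(\cdot)}_g$ is multiplicative in its cocycle argument, this reduces to evaluating $\widetilde{\partial(J)}_g$. A short calculation, in which the commutativity of $G$ cancels the mixed terms, should give
\begin{equation*}
\widetilde{\partial(J)}_g(x,y)=\frac{J(g,xy)}{J(xy,g)}\cdot\frac{J(x,g)}{J(g,x)}\cdot\frac{J(y,g)}{J(g,y)},
\end{equation*}
which is manifestly symmetric in $x$ and $y$. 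Hence $\widetilde{\Phi'}_g$ is symmetric if and only if $\widetilde{\Phi}_g$ is, and by \cite[Proposition 3.8]{HLYY2} I may assume $\Phi=\omega_{\underline{\mathbf{c}}}$.

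The core of the argument is then a direct evaluation of $\widetilde{\omega}_g$. When $G$ is cyclic ($n=1$) or a product of two cyclic groups ($n=2$), the index sequence \eqref{cs} contains no entry $c_{rst}$, so $\omega_{\underline{\mathbf{c}}}$ consists only of the first two families of factors in \eqref{eq2.4}; equivalently it already has the shape \eqref{eq2.13}. Substituting $g=g_1^{a_1}g_2^{a_2}$, $x=g_1^{i_1}g_2^{i_2}$ and $y=g_1^{j_1}g_2^{j_2}$ into \eqref{e2.5}, the contributions of $\omega(x,y,g)$ and $\omega(x,g,y)$ should cancel because the carries they involve agree, for instance $[\frac{j_l+a_l}{m_l}]=[\frac{a_l+j_l}{m_l}]$, leaving only the contribution of $\omega(g,x,y)$:
\begin{equation*}
\widetilde{\omega}_g(x,y)=\prod_{l=1}^{2}\zeta_{m_l}^{c_l a_l[\frac{i_l+j_l}{m_l}]}\cdot\zeta_{m_2}^{c_{12}a_2[\frac{i_1+j_1}{m_1}]}.
\end{equation*}
Every exponent here depends on $x$ and $y$ only through the symmetric quantities $[\frac{i_l+j_l}{m_l}]$, so $\widetilde{\omega}_g$ is symmetric and $\Phi$ is abelian, which proves (1).

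For part (2), suppose $B(V)$ has rank $n\in\{1,2\}$ with summand degrees $g_1,\dots,g_n$. Then $G_V=\langle g_1,\dots,g_n\rangle$ is a finite abelian group generated by at most two elements, hence is cyclic or a direct sum of two finite cyclic groups, and the restriction $\Phi_{G_V}$ is a $3$-cocycle on $G_V$; applying (1) to $\Phi_{G_V}$ completes the proof. I expect the only step requiring genuine care to be the cancellation of the non-symmetric carry terms coming from the second family of factors of $\omega_{\underline{\mathbf{c}}}$; once that bookkeeping is done the rest is formal.
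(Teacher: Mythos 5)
Your proposal is correct, and both of its key computations check out: for a $2$-cochain $J$ on an abelian group one indeed gets
$\widetilde{\partial J}_g(x,y)=\frac{J(g,xy)}{J(xy,g)}\cdot\frac{J(x,g)}{J(g,x)}\cdot\frac{J(y,g)}{J(g,y)}$,
which is symmetric in $x$ and $y$, and for $\omega_{\underline{\mathbf{c}}}$ of the form \eqref{eq2.13} the factors $\omega(x,y,g)$ and $\omega(x,g,y)$ in \eqref{e2.5} cancel exactly (their carries agree), leaving $\widetilde{\omega}_g(x,y)=\omega(g,x,y)$, which is symmetric because it depends on $x,y$ only through $[\frac{i_l+j_l}{m_l}]$.

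Your route does differ from the paper's in one essential respect, so a comparison is in order. The paper gives no computation at all: it quotes from \cite{HLYY2} the characterization that abelian $3$-cocycles are, up to cohomology, exactly those of the form \eqref{eq2.13}, and then the lemma ``follows immediately'' because for $n\le 2$ the representative list \eqref{eq2.4} contains no triple indices $r<s<t$, so every representative already has that form. You replace the citation of this characterization --- specifically its sufficiency direction, that cocycles of the form \eqref{eq2.13} are abelian --- by a direct verification via the symmetry criterion for one-dimensionality of simple objects, and you also prove explicitly that symmetry of $\widetilde{\Phi}_g$ is invariant under multiplying $\Phi$ by a coboundary, a step that any reduction to cohomology representatives needs and that the paper leaves tacit. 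What the paper's approach buys is brevity; what yours buys is self-containedness, since the only input imported from \cite{HLYY2} is the list of representatives \eqref{eq2.4}. Part (2) is handled identically in both: a finite abelian group generated by at most two elements is cyclic or a direct product of two cyclic groups, so part (1) applies to $\Phi|_{G_V}$.
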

Suppose $G=\mathbb{Z}_{m_1}\times\cdots\times \mathbb{Z}_{m_n} = \langle g_1 \rangle \times \cdots \langle g_n\rangle $. Associated to $G$ there is a finite group $\widehat{G}$ defined by
\begin{equation}\label{e4.2}
\widehat{G}=\mathbb{Z}_{m_1^2}\times\cdots\times \mathbb{Z}_{m_n^2} = \langle h_1 \rangle \times \cdots \langle h_n \rangle 
\end{equation}  
Let
\begin{equation}\label{e4.3}
\pi\colon \widehat{G}\to G,\;\;\;\;h_{i}\mapsto g_{i},\;\;\;\;\;1\le i\le n
\end{equation}
be the canonical epimorphism. The following proposition is very important for the study of Nichol algebras of diagonal type.

\begin{proposition}\cite[Proposition 3.15]{HLYY2}\label{p2.6}
Suppose that $\Phi$ is an abelian $3$-cocycle on $G.$ Then $\pi^*\Phi$ is a $3$-coboundary on $\widehat{G}$, namely, there is a $2$-cochain $J$ of $\widehat{G}$ such that $\partial J=\pi^*\Phi$.
\end{proposition}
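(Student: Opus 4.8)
The plan is to show that the pulled-back cochain $\pi^*\Phi$, which is automatically a $3$-cocycle on $\widehat{G}$, represents the trivial class in $\mathrm{H}^3(\widehat{G},\k^*)$. Since a $3$-cocycle is a $3$-coboundary precisely when its cohomology class vanishes, producing a $2$-cochain $J$ with $\partial J=\pi^*\Phi$ is then automatic, and everything reduces to analysing the inflation map $\pi^*\colon \mathrm{H}^3(G,\k^*)\to \mathrm{H}^3(\widehat{G},\k^*)$ and checking that it annihilates $[\Phi]$.

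First I would pass from $\k^*$- to $\Z$-coefficients. For a finite group the exponential sequence $0\to\Z\to\C\to\C^*\to 0$ together with $\mathrm{H}^{>0}(-,\C)=0$ gives an isomorphism $\mathrm{H}^3(-,\k^*)\cong \mathrm{H}^4(-,\Z)$ which is natural in the group, hence compatible with $\pi^*$. For $G=\prod_i\Z_{m_i}$ the Künneth formula describes $\mathrm{H}^4(G,\Z)$: writing $x_i\in\mathrm{H}^2(\Z_{m_i},\Z)\cong\Z_{m_i}$ for the standard generator, the abelian $3$-cocycles \eqref{eq2.13} correspond exactly to the decomposable classes, i.e.\ to the integer combinations of cup products $x_i\smile x_j$ with $i\le j$ (the diagonal squares $x_l\smile x_l$, of order $m_l$, carrying the coefficients $c_l$, and the cross products $x_s\smile x_t$ with $s<t$, of order $(m_s,m_t)$, carrying the $c_{st}$). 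The key structural point is that only these purely quadratic classes occur: the triple terms $c_{rst}$ of the general cocycle \eqref{eq2.4}, which live in the Tor/odd-degree part of the Künneth decomposition, vanish here by the hypothesis that $\Phi$ is abelian.

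Next I would compute inflation on these generators. On $\mathrm{H}^2(-,\Z)\cong\Hom(-,\mathbb{Q}/\Z)$ the map $\pi^*$ is precomposition with $\pi$, so each generator $x_i'$ of the $\Z_{m_i^2}$-summand of $\mathrm{H}^2(\widehat{G},\Z)$ satisfies $\pi^*x_i=m_i\,x_i'$. By naturality of the cup product this yields $\pi^*(x_i\smile x_j)=m_im_j\,(x_i'\smile x_j')$. Finally $x_i'\smile x_j'$ generates a summand of order $(m_i^2,m_j^2)=(m_i,m_j)^2$ in $\mathrm{H}^4(\widehat{G},\Z)$, and the elementary divisibility $(m_i,m_j)^2\mid m_im_j$ (with the convention $(m_i,m_i)=m_i$ in the diagonal case, where it reads $m_i^2\mid m_i^2$) forces $m_im_j\,(x_i'\smile x_j')=0$. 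Thus every generating class is annihilated, $\pi^*[\Phi]=0$, and hence $\pi^*\Phi=\partial J$ for some $2$-cochain $J$.

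It is also instructive, and closer to the explicit spirit of \eqref{eq2.4}, to exhibit $J$ directly. On the $l$-th factor, using representatives $a_l\in[0,m_l^2)$ on $\widehat{G}$ and reductions $b_l\bmod m_l\in[0,m_l)$, a cochain of the form $\zeta_{m_l^2}^{\pm c_l\,a_l(b_l\bmod m_l)}$ has, by a short computation based on the identity $\lambda(b)+\lambda(c)-\lambda(b+c)=m_l\lfloor\frac{\lambda(b)+\lambda(c)}{m_l}\rfloor$, coboundary equal to the pulled-back diagonal factor of $\Phi$; the two competing carry contributions collapse because the spurious one is weighted by $m_l^2\equiv 0$. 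The main obstacle, in either route, is the cross terms $c_{st}$: in the cohomological argument everything hinges on the divisibility $(m_s,m_t)^2\mid m_sm_t$ and on correctly matching the abstract cup-product generators to the explicit formula \eqref{eq2.13}, while in the explicit-cochain route the difficulty is the bookkeeping of the mixed moduli, for which the natural root of unity to use is $\zeta_{(m_s,m_t)}$ rather than $\zeta_{m_s}$ or $\zeta_{m_t}$.
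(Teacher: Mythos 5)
Your route is genuinely different from the paper's. The paper itself contains no proof of Proposition \ref{p2.6}: it is quoted from \cite[Proposition 3.15]{HLYY2}, where the result is obtained computationally, by writing down a $2$-cochain $J$ on $\widehat{G}$ explicitly and verifying $\partial J=\pi^*\Phi$ against the representatives \eqref{eq2.13}. Your argument is instead purely cohomological, and its skeleton is sound: $\pi^*\Phi$ is a coboundary iff $\pi^*[\Phi]=0$; the natural isomorphism $\mathrm{H}^3(-,\k^*)\cong \mathrm{H}^4(-,\Z)$ (standard for finite groups) transports the question to integral cohomology; $\pi^*x_i=m_i\,x_i'$ on $\mathrm{H}^2$; and the order of $x_i'\smile x_j'$ is $m_i^2$ when $i=j$ and $(m_i^2,m_j^2)=(m_i,m_j)^2$ when $i<j$, so the divisibilities $m_i^2\mid m_i^2$ and $(m_i,m_j)^2\mid m_im_j$ annihilate every decomposable class. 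This explains conceptually why squaring the orders of the generators is exactly the right thing to do, and it avoids all cochain bookkeeping; what the explicit route of \cite{HLYY2} buys in exchange is a usable formula for $J$, which that paper needs downstream, whereas for the present paper bare existence suffices.

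There is, however, one genuine gap: the step asserting that under this isomorphism the abelian classes \eqref{eq2.13} ``correspond exactly to the decomposable classes.'' You are matching two direct-sum decompositions --- $\mathrm{H}^3(G,\k^*)$ parametrized by the sequences $\underline{\mathbf{c}}$ of \eqref{eq2.4}, with abelian meaning all $c_{rst}=0$, and $\mathrm{H}^4(G,\Z)$ split by K\"unneth into decomposable part plus Tor part --- but an isomorphism of abstract finite abelian groups need not carry one chosen complement to the other, so a priori $[\Phi]$ could acquire a nonzero Tor component under the identification. This is not a pedantic worry, because $\pi^*$ is \emph{not} zero on all of $\mathrm{H}^3(G,\k^*)$: a class with some $c_{rst}\neq 0$ pulls back to a nonabelian cocycle on $\widehat{G}$ (the alternating ratios are preserved, since $\widetilde{\pi^*\Phi}_{\iota(g)}(e,f)=\widetilde{\Phi}_g(\pi(e),\pi(f))$ as in the proof of Lemma \ref{l2.8}), hence never to a coboundary; so your argument closes only if $[\Phi]$ really lands in the decomposable part. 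The gap is easy to fill: every factor of \eqref{eq2.13} is inflated along a coordinate projection $q\colon G\To Q$ with $Q$ a product of at most two cyclic groups; for such $Q$ the K\"unneth formula makes $\mathrm{H}^4(Q,\Z)$ purely decomposable, and inflation commutes with the natural isomorphism and with cup products, so $[\Phi]$ does lie in the span of the $x_i\smile x_j$. Equivalently, the commutative squares $q\circ\pi=\pi_Q\circ\widehat{q}$ reduce the whole proposition to the case $n\le 2$, where your argument applies verbatim.

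A final caveat on your supplementary sketch of an explicit $J$: the diagonal part is correct, but for a cross term no single-monomial ansatz $J_{st}(h^a,h^b)=\nu^{\pm c_{st}a_t(b_s\bmod m_s)}$ can work for any root of unity $\nu$, including $\zeta_{(m_s,m_t)}$; the coboundary identity forces $\nu^{m_s}=\zeta_{m_t}^{\mp c_{st}}$ together with $\nu^{m_t(m_s,m_t)}=1$, which is already unsolvable for $m_s=4$, $m_t=2$, $c_{st}=1$. Since that paragraph is only illustrative it does not affect your main proof, but the honest explicit construction is more involved than the sketch suggests --- which is a good reason to prefer the cohomological argument you gave.
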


Since the Yetter-Drinfeld module structure of $B(V)$ is determined by the based group $G$, while the braided 
Hopf algebra structure of $B(V)$ and the braiding are determined by the support group $G_V$. So if the braided Hopf algebra structure of $B(V)$ is the only concern, we can omit some  Yetter-Drinfeld module information of $B(V)$ and realize it in a new Yetter-Drinfeld category. Let $B(V)\in \ _{\k G}^{\k G}\mathcal{YD}^\Phi$ and $B(U)\in \ _{\k H}^{\k H}\mathcal{YD}^\Psi$, we will say that $B(V)$ is isomorphic to $B(U)$ if there is a linear isomorphism $F\colon B(V)\To B(U)$ which preserves the multiplication and comultiplication.

For a Yetter-Drinfeld module $V\in \ _{\k G}^{\k G}\mathcal{YD}^\Phi$, we use $\delta_V\colon V\To \k G\otimes V$ to denote the comodule structure map of $V$.
\begin{lemma}[\cite{HLYY2}, Lemma 4.4]\label{l2.7}
Suppose $V\in \ _{\k G}^{\k G}\mathcal{YD}^\Phi$ and $U\in \ _{\k H}^{\k H}\mathcal{YD}^\Psi$, where $H$ is a finite abelian group. Let $G_V$ and $H_U$ be the support groups of $V$ and $U$ respectively. If there is a linear isomorphism $F\colon V\To U$ and 
a group epimorphism $\pi \colon G_V\To H_U$ such that:
\begin{eqnarray*}
&&\delta_U\circ F=(\pi\times F)\circ \delta_V,\\ 
&&F(g \triangleright v)=\pi(g) \triangleright F(v),\\ 
&&\Phi|_{G_V}=(\pi^*\Psi)|_{H_U} 
\end{eqnarray*}
for any $g\in G_V, \ v\in V$. Then $B(V)$ is isomorphic to $B(U)$.
\end{lemma}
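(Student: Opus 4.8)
The plan is to lift the given linear isomorphism $F\colon V\To U$ to the tensor algebras and to verify that the lift respects exactly the data that the Nichols algebra is built from: the product, the braiding, the associativity constraint, and hence the coproduct. First I would define $\widehat{F}\colon T(V)\To T(U)$ as the graded linear map $\bigoplus_{n\ge0}F^{\otimes n}$, sending a word $X_1\otimes\cdots\otimes X_n$ in $V^{\otimes\overrightarrow{n}}$ to $F(X_1)\otimes\cdots\otimes F(X_n)$ in $U^{\otimes\overrightarrow{n}}$ with the same parenthesization. Because $F$ is a linear isomorphism, each $F^{\otimes n}$ and therefore $\widehat{F}$ is a degree-preserving linear isomorphism; and since the products on $T(V)$ and $T(U)$ are just concatenation of words, independent of the cocycles, $\widehat{F}$ trivially preserves multiplication.

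The substance of the proof is that $\widehat{F}$ also preserves comultiplication. The coproduct on $T(V)$ is determined by $\Delta(X)=X\otimes1+1\otimes X$ for $X\in V$ together with the requirement that it be an algebra morphism in the braided sense; expanding $\Delta$ on a word therefore only involves repeated application of the braiding $R(X\otimes Y)=(\deg X)\triangleright Y\otimes X$ and of the associativity isomorphisms coming from $\Phi$. Thus it suffices to check two compatibilities. For the braiding: the first hypothesis $\delta_U\circ F=(\pi\times F)\circ\delta_V$ gives $\deg F(X)=\pi(\deg X)$, and the second gives $F((\deg X)\triangleright Y)=\pi(\deg X)\triangleright F(Y)$; since every element of $V$ is supported on $G_V$, only the $G_V$-action on $V$ ever occurs, and these two identities show that $\widehat{F}$ intertwines the braiding of $V$ with that of $U$. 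For the associator: every bracketing that arises involves only triples of $G_V$-degrees, and the third hypothesis $\Phi|_{G_V}=\pi^*(\Psi|_{H_U})$ makes the corresponding values of $\Phi$ and $\Psi$ agree after applying $\pi$. Combining the two gives $(\widehat{F}\otimes\widehat{F})\circ\Delta_{T(V)}=\Delta_{T(U)}\circ\widehat{F}$.

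It then remains to pass to the quotients. By Definition \ref{d2.9}, $B(V)=T(V)/I_V$ and $B(U)=T(U)/I_U$, where $I_V,I_U$ are the unique maximal graded Hopf ideals concentrated in degrees $\ge2$. As $\widehat{F}$ is a degree-preserving linear isomorphism preserving both multiplication and comultiplication, it sends graded Hopf ideals to graded Hopf ideals and hence carries the maximal one $I_V$ onto $I_U$; consequently it descends to a linear isomorphism $B(V)\To B(U)$ respecting multiplication and comultiplication, which is precisely the notion of isomorphism of Nichols algebras fixed before the lemma. The main obstacle is the associator step of the middle paragraph: because $V$ and $U$ live in two genuinely different non-strict braided categories $_{\k G}^{\k G}\mathcal{Y}\mathcal{D}^\Phi$ and $_{\k H}^{\k H}\mathcal{Y}\mathcal{D}^\Psi$, one must track every instance of the associativity isomorphism entering the recursive definition of $\Delta$ (and of the product on the braided tensor square) and confirm that it only ever couples $G_V$-degrees, so that $\Phi|_{G_V}=\pi^*(\Psi|_{H_U})$ genuinely suffices; by contrast the grading and action compatibilities of the braiding are comparatively routine.
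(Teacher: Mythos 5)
A preliminary remark: the paper never proves Lemma \ref{l2.7} itself --- it is imported verbatim from \cite{HLYY2} (Lemma 4.4) --- so there is no in-paper proof to compare against line by line. Judged on its own, your transport-of-structure argument is the natural one and matches what the cited source does: lift $F$ to $\widehat{F}=\bigoplus_{n}F^{\otimes n}$, observe that every structure constant of the braided Hopf algebra $T(V)$ involves only the $G_V$-grading, the $G_V$-action on $V$, and $\Phi|_{G_V}$, so the three hypotheses force $\widehat{F}$ to intertwine multiplication and comultiplication, and then use two-sided maximality ($\widehat{F}(I_V)\subseteq I_U$ and $\widehat{F}^{-1}(I_U)\subseteq I_V$, hence equality) to descend to an isomorphism $B(V)\To B(U)$ in the sense fixed before the lemma. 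That skeleton is correct and complete.

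One justification you give is false, even though the claim it supports is true. In the non-strict category $_{\k G}^{\k G}\mathcal{Y}\mathcal{D}^{\Phi}$ the multiplication of $T(V)=\bigoplus_{n}V^{\otimes \overrightarrow{n}}$ is \emph{not} bare concatenation independent of the cocycle: it is the coherence (re-bracketing) isomorphism $V^{\otimes \overrightarrow{m}}\otimes V^{\otimes \overrightarrow{n}}\To V^{\otimes \overrightarrow{m+n}}$, which on $G$-homogeneous components is concatenation multiplied by a product of $\Phi^{\pm 1}$-values; for instance
\begin{equation*}
X\cdot (Y\otimes Z)=\Phi(x,y,z)\,(X\otimes Y)\otimes Z
\end{equation*}
for $x,y,z$ the degrees of $X,Y,Z$. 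Indeed, plain concatenation would not even satisfy the algebra axiom $m\circ(m\otimes \id)=m\circ(\id\otimes m)\circ a$ when the associator is nontrivial. So ``trivially preserves multiplication'' is not available. The repair is exactly the mechanism you already deploy for the coproduct: every such scalar is $\Phi$ evaluated at a triple of $G_V$-degrees, and $\Phi|_{G_V}=\pi^{*}(\Psi|_{H_U})$ together with $\deg F(X)=\pi(\deg X)$ identifies it with the corresponding $\Psi$-scalar on the $U$-side. The same bookkeeping is needed for the action of degrees on words, cf. \eqref{n3.8}, which introduces $\widetilde{\Phi}$-factors; these are again ratios of $\Phi$-values at $G_V$-triples and are covered by the hypotheses. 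With this correction your proof goes through.
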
 

Let $G$ and $\mathbb{G}$ be two finite groups and $\pi\colon\mathbb{G}\to G$ a group epimorphism, $\iota\colon G\to \mathbb{G}$ be a section of $\pi$, that is $\pi\circ \iota=\id_G$. With these notations, we have following lemma.
\begin{lemma}\label{l2.8}
Let $V\in {_{\k G}^{\k G}\mathcal{YD}^\Phi}$. Then there is an object $\widetilde{V}\in {_{\k \mathbb{G}}^{\k \mathbb{G}}\mathcal{YD}^{\pi^*{\Phi}}}$ such that $\widetilde{V}=V$ as linear spaces and the Yetter-Drinfeld module structure is determined by
\begin{eqnarray}
&&\delta_{\widetilde{V}}=(\iota\otimes \id)\circ \delta_{V},\label{eq3.1} \\ 
&&g \triangleright v=\pi(g) \triangleright v \label{eq3.2}
\end{eqnarray}
for any $g\in \mathbb{G},\ v\in V$. Moreover, we have $B(V)\cong B(\widetilde{V})$.
\end{lemma}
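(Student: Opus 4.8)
The plan is to construct $\widetilde{V}$ explicitly as prescribed and then verify that it genuinely lands in ${_{\k \mathbb{G}}^{\k \mathbb{G}}\mathcal{YD}^{\pi^*\Phi}}$, reducing the final isomorphism claim to an application of Lemma \ref{l2.7}. First I would set $\widetilde{V} = V$ as a vector space and impose the comodule map $\delta_{\widetilde{V}} = (\iota \otimes \id)\circ \delta_V$ from \eqref{eq3.1} together with the action $g \triangleright v = \pi(g)\triangleright v$ from \eqref{eq3.2}. Since $\iota$ is a section of $\pi$, a homogeneous $v \in V_x$ (with $x = \deg v \in G$) becomes homogeneous in $\widetilde{V}$ of $\mathbb{G}$-degree $\iota(x)$; note $\iota$ need not be a homomorphism, but it is merely a set-theoretic section, which is all that is needed to define a $\mathbb{G}$-grading.

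The heart of the verification is to check the twisted projective-representation condition \eqref{eq2.6} for $\widetilde{V}$ with respect to the $2$-cocycle $\widetilde{(\pi^*\Phi)}_{\iota(x)}$. For $g, g' \in \mathbb{G}$ and $v \in \widetilde{V}_{\iota(x)}$ we compute $g \triangleright_{\widetilde{V}} (g' \triangleright_{\widetilde{V}} v) = \pi(g)\triangleright(\pi(g')\triangleright v)$, which by \eqref{eq2.6} in $V$ equals $\widetilde{\Phi}_x(\pi(g),\pi(g'))\,(\pi(g)\pi(g'))\triangleright v = \widetilde{\Phi}_x(\pi(g),\pi(g'))\,(gg')\triangleright_{\widetilde{V}} v$. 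I must then match this scalar with $\widetilde{(\pi^*\Phi)}_{\iota(x)}(g,g')$. Unwinding definition \eqref{e2.5} for the pulled-back cocycle gives
\[
\widetilde{(\pi^*\Phi)}_{\iota(x)}(g,g') = \frac{\pi^*\Phi(\iota(x),g,g')\,\pi^*\Phi(g,g',\iota(x))}{\pi^*\Phi(g,\iota(x),g')} = \widetilde{\Phi}_{\pi\iota(x)}(\pi(g),\pi(g')) = \widetilde{\Phi}_x(\pi(g),\pi(g')),
\]
using $\pi\circ\iota = \id_G$. This is exactly the scalar above, so \eqref{eq2.6} holds. The analogous check for the tensor-product action \eqref{n3.8} proceeds identically, since both the module structure and the associativity constraint in $\pi^*\Phi$ are governed by composing with $\pi$, and $\pi$ respects products. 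I expect this bookkeeping with the pulled-back cocycle to be the only real obstacle, and it is mild: the single identity $\pi\iota = \id_G$ collapses every discrepancy.

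Finally, to obtain $B(V) \cong B(\widetilde{V})$ I would apply Lemma \ref{l2.7} with the identity map $F = \id\colon V \to \widetilde{V}$ and the epimorphism $\pi$ restricted to support groups. The support group of $\widetilde{V}$ is $\langle \iota(g_1),\dots,\iota(g_n)\rangle$, and $\pi$ carries its generators $\iota(g_i)$ back to $g_i$, so $\pi$ restricts to an epimorphism $\mathbb{G}_{\widetilde{V}} \to G_V$; the three hypotheses of Lemma \ref{l2.7} read off directly from \eqref{eq3.1}, \eqref{eq3.2}, and the compatibility $\Phi = (\pi^*\Phi)\circ(\pi\times\pi\times\pi)|_{G_V}$, which again is immediate from $\pi\iota = \id$. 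Since Lemma \ref{l2.7} yields a multiplication- and comultiplication-preserving linear isomorphism of the associated Nichols algebras, the conclusion $B(V)\cong B(\widetilde{V})$ follows.
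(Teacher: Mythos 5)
Your proposal is correct and follows essentially the same route as the paper: verify that each homogeneous component of $\widetilde{V}$ is a projective $\mathbb{G}$-representation for the cocycle $\widetilde{(\pi^*\Phi)}_{\iota(x)}$ via the identity $\pi\circ\iota=\id_G$, then invoke Lemma \ref{l2.7} with the identity linear map to get $B(V)\cong B(\widetilde{V})$. The only blemish is notational: for Lemma \ref{l2.7} the linear isomorphism must go $\widetilde{V}\to V$ so as to match the direction of the epimorphism $\mathbb{G}_{\widetilde{V}}\to G_V$ (your $F=\id\colon V\to\widetilde{V}$ is written the other way), but since $F$ is the identity on equal underlying spaces this is immaterial.
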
 

\begin{proof}
We need to show that the space $\widetilde{V}$ with action and coaction of $\mathbb{G}$ defined by \eqref{eq3.1} and  \eqref{eq3.2} is a Yetter-Drinfeld module in ${_{\k \mathbb{G}}^{\k \mathbb{G}}\mathcal{YD}^{\pi^*{\Phi}}}$. Let $V=\bigoplus_{g\in G}V_g$. Then it is obvious that we have $\widetilde{V}=\bigoplus_{g\in G}\widetilde{V}_{\iota(g)}$ such that $\widetilde{V}_{\iota(g)}=V_g$ as vector spaces for all $g\in G$. We only need to prove that $\widetilde{V}_{\iota(g)}$ is a projective $\mathbb{G}$-representation associated to 
$2$-cocycle ${\pi^*{\Phi}}_{\iota(g)}$. Let $e,f\in \mathbb{G}$ and $v\in V$, we have 
\begin{eqnarray*}
&&e\triangleright (f\triangleright v)= \pi(e)\triangleright (\pi(f)\triangleright v)\\
&=&\widetilde{\Phi}_g(\pi(e),\pi(f)) \pi(ef)\triangleright v\\
&=&\frac{\Phi(g,\pi(e),\pi(f))\Phi(\pi(e),\pi(f),g)}{\Phi(\pi(e),g,\pi(f))} ef \triangleright v\\
&=&\frac{\Phi(\pi\circ\iota(g),\pi(e),\pi(f))\Phi(\pi(e),\pi(f),\pi\circ\iota(g))}{\Phi(\pi(e),\pi\circ\iota(g),\pi(f))} ef \triangleright v\\
&=&\frac{\pi^*\Phi(\iota(g),e,f)\pi^*\Phi(e,f,\iota(g))}{\pi^*\Phi(e,\iota(g),f)} ef \triangleright v\\
&=&\widetilde{\pi^*\Phi}_{\iota(g)}(e,f) ef \triangleright v.
\end{eqnarray*}
So we have proved that $\widetilde{V}_{\iota(g)}$ is a projective $\mathbb{G}$-representation associated to the
$2$-cocycle ${\pi^*{\Phi}}_{\iota(g)}$, and hence $\widetilde{V}\in {_{\k \mathbb{G}}^{\k \mathbb{G}}\mathcal{YD}^{\pi^*{\Phi}}}$.
The isomorphism $B(V)\cong B(\widetilde{V})$ follows from Lemma \ref{l2.7} immediately.
\end{proof}

Let $B(V)$ be a Nichols algebra in $^{\k G}_{\k G}\mathcal{Y}\mathcal{D}^\Phi$. By lemma \ref{l2.8}, $B(V)$ is isomorphic to a Nichols algebra $B(\widetilde{V})$ in $^{\k \widehat{G}}_{\k \widehat{G}}\mathcal{Y}\mathcal{D}^{\pi^*\Phi}$.  If $\Phi$ is an abelian $3$-cocycle, then there is a $2$-cochain $J$ of $\widehat{G}$ such that $\partial J=\pi^*\Phi$. According to Lemma \ref{l2.6}, $B(\widetilde{V}^{J^{-1}})$ is a Nichols algebra in $^{\k \widehat{G}}_{\k \widehat{G}}\mathcal{Y}\mathcal{D}$, which is twist equivalent to $B(V)$. So we obtain the following proposition.

\begin{proposition}\label{p2.9}
Let $\Phi$ be an abelian $3$-cocycle and $B(V)$ be a Nichols algebra in $^{\k G}_{\k G}\mathcal{Y}\mathcal{D}^\Phi$. Then $B(V)$ is twist equivalent to a Nichols algebra in $^{\k \widehat{G}}_{\k \widehat{G}}\mathcal{Y}\mathcal{D}$.
\end{proposition}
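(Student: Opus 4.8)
The plan is to assemble the three reduction tools just established—the change-of-based-group isomorphism (Lemma \ref{l2.8}), the trivialization of the pullback cocycle (Proposition \ref{p2.6}), and the twisting invariance of Nichols algebras (Lemma \ref{l2.6})—into a two-step reduction that first lifts $B(V)$ to the larger group $\widehat{G}$ and then untwists the associator.

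First I would lift. Applying Lemma \ref{l2.8} to the canonical epimorphism $\pi\colon\widehat{G}\to G$ of \eqref{e4.3} together with a fixed section $\iota$, I obtain an object $\widetilde{V}\in {^{\k\widehat{G}}_{\k\widehat{G}}\mathcal{Y}\mathcal{D}^{\pi^*\Phi}}$ with $B(V)\cong B(\widetilde{V})$. This moves the problem into a Yetter--Drinfeld category whose associator is $\pi^*\Phi$ rather than $\Phi$, at the cost of enlarging the based group from $G$ to $\widehat{G}$; crucially, the braided Hopf algebra $B(\widetilde{V})$ is unchanged up to isomorphism.

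Next I would untwist. Since $\Phi$ is an abelian $3$-cocycle, Proposition \ref{p2.6} furnishes a $2$-cochain $J$ of $\widehat{G}$ with $\partial J=\pi^*\Phi$. I then twist $\widetilde{V}$ by $J^{-1}$: by the construction preceding Lemma \ref{l2.6}, the object $\widetilde{V}^{J^{-1}}$ lies in ${^{\k\widehat{G}}_{\k\widehat{G}}\mathcal{Y}\mathcal{D}^{\pi^*\Phi\ast\partial(J^{-1})}}$, and the cocycle arithmetic $\pi^*\Phi\ast\partial(J^{-1})=\partial J\ast\partial(J^{-1})=\partial(J\cdot J^{-1})$ collapses the associator to the trivial one. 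Hence $\widetilde{V}^{J^{-1}}\in {^{\k\widehat{G}}_{\k\widehat{G}}\mathcal{Y}\mathcal{D}}$, and Lemma \ref{l2.6} identifies $B(\widetilde{V})^{J^{-1}}\cong B(\widetilde{V}^{J^{-1}})$ as a Nichols algebra in the untwisted category that is twist equivalent to $B(\widetilde{V})$. Composing with the isomorphism $B(V)\cong B(\widetilde{V})$ from the first step yields the claim.

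The genuinely substantive input is Proposition \ref{p2.6}: the entire reduction hinges on the fact that pulling $\Phi$ back along $\pi$ to the larger group $\widehat{G}=\mathbb{Z}_{m_1^2}\times\cdots\times\mathbb{Z}_{m_n^2}$ kills it in $\operatorname{H}^3(\widehat{G},\k^*)$, which is precisely why the squares $m_i^2$ enter the definition of $\widehat{G}$. Granting that, the only remaining points require no ingenuity: confirming that $\partial$ is a homomorphism from $2$-cochains to $3$-coboundaries, so that $\partial J\ast\partial(J^{-1})$ is the trivial cocycle, and checking that the twisting functor $F_{J^{-1}}$ carries the Nichols algebra $B(\widetilde{V})$ to the Nichols algebra $B(\widetilde{V}^{J^{-1}})$ rather than to some merely twist-equivalent graded bialgebra—both of which are delivered by Lemma \ref{l2.6}. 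Thus no essential difficulty remains beyond correctly threading the twisting bookkeeping.
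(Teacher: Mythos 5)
Your proposal is correct and follows exactly the paper's own argument: lift along the canonical epimorphism $\pi\colon\widehat{G}\to G$ via Lemma \ref{l2.8}, use Proposition \ref{p2.6} to produce $J$ with $\partial J=\pi^*\Phi$, and untwist by $J^{-1}$ via Lemma \ref{l2.6}. The bookkeeping of the cocycle arithmetic is handled properly, so nothing is missing.
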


Let $V\in {^{\k G}_{\k G}\mathcal{Y}\mathcal{D}^\Phi}$, if the action of the support group $G_V$ on $V$ is diagonal, then $V$ has a basis $\{X_1,\cdots, X_n\}$ such that 
\begin{equation}\label{eq2.18}
\begin{split}
\delta(X_i)=g_i\otimes X_i,\ \ g_i\triangleright X_j=q_{ij}X_j,
\end{split}
\end{equation}
where $q_{ij}\in \k$ for $1\leq i, j\leq n$. Such a basis $\{X_1,\cdots, X_n\}$ is called a {\bf standard basis} of $V$.
The following lemma follows from \cite[Lemma 4.1]{HLYY2} immediately. 
\begin{lemma}\label{l2.9}
Let $V\in {^{\k G}_{\k G}\mathcal{Y}\mathcal{D}^\Phi}$, the following three conditions are equivalent:
\begin{itemize}
\item[(1).] $V$ has a standard basis.
\item[(2).]   The action of support group $G_V$ on $V$ is diagonal.
\item[(3).] $\Phi_{G_V}$ is an abelian $3$-cocycle on $G_V$.
\end{itemize}
\end{lemma}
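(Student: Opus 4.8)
The plan is to prove the cycle of implications $(3)\Rightarrow(1)\Rightarrow(2)\Rightarrow(3)$, leaning on the characterization recalled just before the statement---that a simple object of ${_{\k G}^{\k G}\mathcal{YD}^\Phi}$ concentrated in degree $g$ is one-dimensional exactly when $\widetilde{\Phi}_g$ is symmetric---together with the multiplicativity relation $\widetilde{\Phi}_g\widetilde{\Phi}_h=\widetilde{\Phi}_{gh}$ and the standard dictionary between symmetric $2$-cocycles, coboundaries, and diagonalizable projective representations of a finite abelian group over $\k$.

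The first thing I would record is that each homogeneous component $V_g$ is stable under the projective action of $G$: this is immediate from \eqref{eq2.6}, which presents $V_g$ as a projective $(G,\widetilde{\Phi}_g)$-representation. Consequently the $G_V$-action respects the grading $V=\bigoplus_{g}V_g$, and every diagonalization question may be analyzed one component at a time, where the action restricts to a projective representation of the abelian group $G_V$ with $2$-cocycle $(\widetilde{\Phi}_g)|_{G_V}$.

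For $(3)\Rightarrow(1)$ I would argue as follows. If $\Phi_{G_V}$ is abelian, the quoted criterion gives that $(\widetilde{\Phi}_g)|_{G_V}$ is symmetric for every $g$; since $\k$ is algebraically closed of characteristic zero, a symmetric $2$-cocycle on a finite abelian group is a coboundary, so the relevant twisted group algebra is commutative and semisimple and each $V_g$ decomposes into common eigenlines for $G_V$. Assembling a homogeneous eigenbasis over all $g$ produces a basis satisfying \eqref{eq2.18}, i.e. a standard basis. The implication $(1)\Rightarrow(2)$ is then essentially formal: a standard basis is a simultaneous eigenbasis for the generators $g_1,\dots,g_n$ of $G_V$, so every element of $G_V$ acts by a scalar on each basis vector and the $G_V$-action is diagonal.

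The step carrying the real content is $(2)\Rightarrow(3)$, and I expect it to be the main obstacle. Starting from a common eigenbasis, each eigenline $\k X_i\subseteq V_{g_i}$ furnishes a one-dimensional projective representation of $G_V$, and the existence of such a representation forces its cocycle $(\widetilde{\Phi}_{g_i})|_{G_V}$ to be a coboundary, hence symmetric. The crux is then to propagate symmetry from the generators to all of $G_V$: using $\widetilde{\Phi}_g\widetilde{\Phi}_h=\widetilde{\Phi}_{gh}$, the assignment $g\mapsto(\widetilde{\Phi}_g)|_{G_V}$ is a homomorphism into $Z^2(G_V,\k^*)$, so its image over $G_V=\langle g_1,\dots,g_n\rangle$ lands in coboundaries once the generators do. Hence $(\widetilde{\Phi}_g)|_{G_V}$ is symmetric for all $g\in G_V$, and the quoted criterion yields that every simple object of ${_{\k G_V}^{\k G_V}\mathcal{YD}^{\Phi_{G_V}}}$ is one-dimensional, i.e. $\Phi_{G_V}$ is abelian. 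The only genuinely delicate points are the grading-stability used to keep eigenvectors homogeneous and this generator-to-group propagation; the remaining facts about symmetric cocycles and diagonalizability are standard, which is why the result can be said to follow immediately from \cite[Lemma 4.1]{HLYY2}.
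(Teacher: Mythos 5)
Your argument is correct, but it is not the paper's route, because the paper offers no argument at all for this lemma: it simply asserts that the statement follows immediately from \cite[Lemma 4.1]{HLYY2}. What you have produced is a self-contained replacement, built only from facts recalled in Section 2 --- the criterion that a simple object of degree $g$ is one-dimensional precisely when $\widetilde{\Phi}_g$ is symmetric, the multiplicativity $\widetilde{\Phi}_g\widetilde{\Phi}_h=\widetilde{\Phi}_{gh}$, and the standard equivalences (symmetric $2$-cocycle $=$ coboundary $=$ existence of one-dimensional projective representations) for finite abelian groups over an algebraically closed field of characteristic zero. All three implications in your cycle check out: grading-stability lets you diagonalize one component $V_g$ at a time; a one-dimensional projective representation trivializes its cocycle, giving symmetry of $(\widetilde{\Phi}_{g_i})|_{G_V}$ at the generators; and since symmetry is equivalent to cohomological triviality, the set of $h\in G_V$ for which $(\widetilde{\Phi}_h)|_{G_V}$ is symmetric is a subgroup of $G_V$ containing the generators, hence everything. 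The one point deserving care is that the on-the-nose identity $\widetilde{\Phi}_g\widetilde{\Phi}_h=\widetilde{\Phi}_{gh}$ is stated in the paper only for cocycles of the explicit shape \eqref{eq2.4}; for an arbitrary $3$-cocycle it holds only up to the coboundary of $\mu(e)=\widetilde{\Phi}_e(g,h)$ (this discrepancy is exactly what makes $V_g\otimes V_h$ a $(G,\widetilde{\Phi}_{gh})$-representation). Your propagation step only uses the induced homomorphism $G_V\to H^2(G_V,\k^*)$, so the argument survives verbatim at the level of cohomology classes, but phrasing it that way (rather than claiming a homomorphism into $Z^2(G_V,\k^*)$) would make it airtight for general $\Phi$. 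What your route buys is independence from the external reference; what the paper's citation buys is brevity.
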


Now suppose $V$ has a standard basis $\{X_1,\cdots, X_n\}$, then we can define a nondirected graph $\mathcal{D}(V)$ associated to $B(V)$ as follows:
\begin{itemize}
\item[1)] There is a bijecton $\phi$ from $I=\{ 1, 2, \dots, n \}$ to the set of vertices of $\mathcal{D}(V)$.
\item[2)] For all $1\leq i\leq n,$ the vertex $\phi(i)$ is labelled  by $q_{ii}.$
\item[3)] For all $1\leq i,j\leq n,$ the number  $n_{ij}$ of edges between $\phi(i)$ and $\phi(j)$ is either $0$ or $1.$ If $i=j$ or $q_{ij}q_{ji}=1$ then $n_{ij}=0,$ otherwise $n_{ij}=1$ and the edge is labelled by $\widetilde{q_{ij}}=q_{ij}q_{ji}$ for all $1\leq i<j\leq n.$
\end{itemize}
The diagram $\mathcal{D}(V)$ is called the {\bf generalized Dynkin diagram} of $B(V)$.  Note that a Nichols algebra of diagonal type always has a generalized Dynkin diagram. It is also helpful to point out that if the generalized Dynkin diagram $\mathcal{D}(V)$ exists, it does not depend on the choice of the standard basis of $V$.  It is not hard to see that if $B(V)$ has a generalized Dynkin diagram, then $B(V^J)$ also have the same generalized Dynkin diagram with $B(V)$ for any $2$-cochain $J$ of $G$. So combining this with Proposition \ref{p2.9}, we have the following important proposition.

\begin{proposition}\label{p2.11}
Let $B(V)\in {_{\k G}^{\k G}\mathcal{YD}^\Phi}$ be a Nichols algebra with a standard basis. Then $B(V)$ is twist equivalent to a Nichols algebra $B(U)$ in ${_{\k \widehat{G_V}}^{\k \widehat{G_V}}\mathcal{YD}}$, and the two Nichols algebras have the same generalized Dynkin diagrams.
\end{proposition}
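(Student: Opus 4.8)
The plan is to assemble the statement from the three reduction tools already established, applying them in sequence. First I would observe that the hypothesis that $B(V)$ has a standard basis is, by Lemma \ref{l2.9}, equivalent to $\Phi_{G_V}$ being an abelian $3$-cocycle on $G_V$. This is the crucial input, since the two later propositions all require abelianness. Since the braided Hopf algebra structure of $B(V)$ and its braiding depend only on the support group $G_V$ rather than on the full based group $G$ (see the discussion after Proposition \ref{p2.6}), I would first use Lemma \ref{l2.8} together with Lemma \ref{l2.7} to realize $B(V)$ as a Nichols algebra whose based group is $G_V$ itself. Concretely, applying Lemma \ref{l2.8} to the inclusion data relating $G$ and $G_V$, or directly invoking Lemma \ref{l2.7} with the obvious identity on the standard basis, replaces $B(V) \in {_{\k G}^{\k G}\mathcal{YD}^\Phi}$ by an isomorphic Nichols algebra in ${_{\k G_V}^{\k G_V}\mathcal{YD}^{\Phi_{G_V}}}$.

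Next I would invoke Proposition \ref{p2.9}, now legitimate because $\Phi_{G_V}$ is abelian. That proposition produces a $2$-cochain $J$ of $\widehat{G_V}$ with $\partial J = \pi^*(\Phi_{G_V})$ and yields a Nichols algebra $B(U)$ in the \emph{untwisted} category ${_{\k \widehat{G_V}}^{\k \widehat{G_V}}\mathcal{YD}}$ that is twist equivalent to $B(V)$. Here one uses Lemma \ref{l2.8} to pass from $G_V$ up to $\widehat{G_V}$ along the canonical epimorphism $\pi\colon \widehat{G_V}\to G_V$ of \eqref{e4.3}, and then Lemma \ref{l2.6} to twist by $J^{-1}$, killing the associator. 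This produces the desired $B(U)$ and the twist equivalence asserted in the statement.

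Finally, for the claim about generalized Dynkin diagrams, I would argue that none of the preceding operations alters the diagram. The generalized Dynkin diagram is determined purely by the scalars $q_{ij}$ of \eqref{eq2.18}, via the vertex labels $q_{ii}$ and the edge labels $\widetilde{q_{ij}} = q_{ij}q_{ji}$. The isomorphism of Lemma \ref{l2.7} preserves the comodule structure and the action up to the epimorphism $\pi$, so it preserves each $q_{ij}$; and twisting by a $2$-cochain $J$, by formula \eqref{3.4}, rescales the action by the symmetric-in-effect factor $J(g,x)/J(x,g)$, which leaves the products $q_{ii}$ and $q_{ij}q_{ji}$ invariant. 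This last point is precisely the remark already made in the text that $B(V^J)$ has the same generalized Dynkin diagram as $B(V)$. Combining these invariances shows $B(U)$ carries the same diagram as the original $B(V)$.

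The only genuine subtlety, and the step I would check most carefully, is the bookkeeping of the support groups and the compatibility of the $3$-cocycles when chaining Lemma \ref{l2.8} and Proposition \ref{p2.9}: one must ensure that the support group is preserved under the reductions so that the abelian hypothesis on $\Phi_{G_V}$ transports correctly to $\widehat{G_V}$, and that the restriction of $\pi^*(\Phi_{G_V})$ to the relevant subgroup matches the cocycle used in Lemma \ref{l2.7}. Once this compatibility is verified, the proposition follows immediately by concatenating the cited results.
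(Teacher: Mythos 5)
Your proposal is correct and follows essentially the same route as the paper: the paper's (terse) argument is precisely the combination of Lemma \ref{l2.9} (standard basis $\Leftrightarrow$ $\Phi_{G_V}$ abelian), the change of based group to $G_V$ via Lemma \ref{l2.7}, Proposition \ref{p2.9} applied over $G_V$, and the observation that neither change of based group nor twisting by a $2$-cochain alters the labels $q_{ii}$ and $q_{ij}q_{ji}$. The only slip is your suggestion that Lemma \ref{l2.8} could handle the ``inclusion data'' $G_V\subset G$ --- that lemma goes up along an epimorphism, not down to a subgroup --- but this is harmless since your alternative via Lemma \ref{l2.7} is exactly the right tool.
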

%\begin{proof}
%Firstly, notice that $B(V)$ is naturally a Nichols algebra in ${_{\k {G_V}}^{\k {G_V}}\mathcal{YD}^{\Phi_{G_V}}}$. Let $\widehat{G_V}$ and $\pi\colon \widehat{G_V}\to G_V$ are defined by \eqref{e4.2} and \eqref{e4.3}. Then by Lemma \ref{l2.5}, $B(V)$ is isomorphic 
%to a Nichols algebra $B(\widetilde{V})$ in  ${_{\k \widehat{G_V}}^{\k \widehat{G_V}}\mathcal{YD}^{\pi^*{\Phi_{G_V}}}}$. Since $\Phi_{G_V}$ is an abelian $3$-cocycle, ${\pi^*{\Phi_{G_V}}}$ is a $3$-coboundary by Proposition \ref{p4.2}. Let $J$ be a $2$-cochain of $\widehat{G_V}$ such that $\partial J={\pi^*{\Phi_{G_V}}}$. Then $B(\widetilde{V})^{J^{-1}}$ is a Nichols algebra in ${_{\k \widehat{G_V}}^{\k \widehat{G_V}}\mathcal{YD}}$. Let $U=\widetilde{V}^{J^{-1}}$, so $B(V)$ is twist equivalent to $B(U)$. It is easy to see that $B(\widetilde{V})$ has the same generalized Dynkin diagram of $B(V)$, and twist doesn't change generalized Dynkin diagram, so $B(V)$ and $B(U)$ have the same generalized Dynkin diagrams.
%\end{proof}

According to this proposition, all finite-dimensional Nichols algebras with a standard basis can be determined by Heckenberger's classification result of arithmetic root systems \cite{H4}. Note that if $B(V)$ is rank $1$ or rank $2$, then $G_V$ must be a finite cyclic group or direct product of two finite cyclic groups. According to \eqref{eq2.4}, all the $3$-cocycles on finite cyclic group or direct product of two finite cyclic groups must be abelian. So a Nichols algebra of rank $1$ or rank $2$ always has a standard basis. One of the main result in \cite{HYZ} is as follows.
\begin{proposition}\cite[Proposition 3.18]{HYZ}\label{p5.3}
Suppose $V\in {_{\k G}^{\k G} \mathcal{YD}^\Phi}$ is a simple Yetter-Drinfeld module of nondiagonal type with $\deg V=g$. Then $B(V)$ is finite dimensional if and only if $V$ is one of the following two types:
\begin{itemize}
\item[(I).] $g\triangleright v=-v$ for all $v\in V$;
\item[(II).] $\dim(V)=2$ and $g\triangleright v=\zeta_3 v$ for all $v\in V$, here $\zeta_3$ is a $3$-rd primitive root of unity.
\end{itemize} 
\end{proposition}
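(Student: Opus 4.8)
The plan is to reduce the problem to the realm of diagonal Nichols algebras, where Heckenberger's classification of arithmetic root systems applies, and then extract a concrete finite-dimensional structure whose two cases can be analyzed directly. Since the statement concerns a \emph{simple} module of nondiagonal type, the first step is to record what nondiagonality means here: by Lemma \ref{l2.9}, a simple $V=V_g$ fails to have a standard basis precisely when $\widetilde{\Phi}_g$ is \emph{not} symmetric, so the projective representation of $G$ on $V_g$ attached to the $2$-cocycle $\widetilde{\Phi}_g$ is genuinely noncommutative. I would begin by invoking the theory of projective representations of the finite abelian group $G$: the commutator pairing $b_g(e,f)=\widetilde{\Phi}_g(e,f)\widetilde{\Phi}_g(f,e)^{-1}$ is an alternating bicharacter on $G$, and its radical $\mathrm{rad}(b_g)$ controls the dimension of the unique (up to iso) irreducible projective representation, namely $\dim V=\sqrt{[G:\mathrm{rad}(b_g)]}$.

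Next I would study the self-braiding. The braiding $c=R\colon V\otimes V\to V\otimes V$ is given on $V_g$ by $c(X\otimes Y)=g\triangleright Y\otimes X$, so $c^2(X\otimes Y)=(g\triangleright X)\otimes(g\triangleright Y)$ up to the projective $2$-cocycle factor $\widetilde{\Phi}_g(g,g)$ from \eqref{n3.8}. The key structural fact I want to exploit is that the scalar by which $g$ acts on $V$—which is well-defined precisely because $g$ is central and the associated projective action of the \emph{subgroup} $\langle g\rangle$ may still be scalar even when the full action is not—determines whether $B(V)$ can be finite-dimensional at all. Concretely, I would show that finite-dimensionality forces $g\triangleright v=\eta\, v$ for a single root of unity $\eta$ (this is where the ``diagonal braiding on the scalar level'' enters), and then determine the admissible $\eta$. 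The natural tool is Proposition \ref{p2.11} combined with Lemma \ref{l2.8}: realizing $B(V)$ over the covering group $\widehat{G}$ and untwisting by the $2$-cochain $J$ with $\partial J=\pi^*\Phi$ (Proposition \ref{p2.6}), one lands in an ordinary Yetter–Drinfeld category where the resulting braided Hopf algebra, though built from a higher-dimensional $V$, has a generalized Dynkin diagram after choosing a basis diagonalizing the central action. Heckenberger's list then pins down $\eta$: either $\eta=-1$ (yielding type (I)) or, in the two-dimensional case, $\eta=\zeta_3$ (yielding type (II)); all other self-braidings force $\dim B(V)=\infty$.

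I would then treat the two surviving cases and their dimension constraints. For type (I), with $g\triangleright v=-v$, the braiding is a signed flip and $B(V)$ is expected to be the exterior-type algebra $\Lambda(V)$, finite-dimensional for every $\dim V$; I would verify that the quadratic relations generating $I$ cut $T(V)$ down correctly using the defining property of $B(V)$ as the image of the quantum symmetrizer. For type (II) one must additionally show that finiteness \emph{forces} $\dim V=2$: here I would compute the rank-$3$ and higher symmetrizers (or equivalently apply the $\zeta_3$-analogue of the symmetric-algebra obstruction) to see that $\dim V\ge 3$ produces an infinite-dimensional algebra, so exactly $\dim V=2$ survives. The main obstacle, I expect, will be the bookkeeping in the covering-group reduction: one must check that the untwisting procedure of Proposition \ref{p2.9} is compatible with the noncommutative projective action on $V_g$, so that the passage to $\widehat{G}$ genuinely linearizes $\pi^*\Phi$ \emph{while} keeping track of how the irreducible projective representation splits after pulling back. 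Controlling this interplay—between the nonsymmetric $\widetilde{\Phi}_g$ downstairs and the honest (untwisted) Yetter–Drinfeld structure upstairs—is the technical heart, and it is what ultimately lets Heckenberger's diagonal classification be applied to an a priori nondiagonal $V$.
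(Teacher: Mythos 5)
First, a point of order: the paper does not prove this proposition at all; it is quoted from \cite[Proposition 3.18]{HYZ}, so there is no in-text argument to compare against, and your proposal must be judged on its own. Its skeleton does match the known proof: since $V$ is simple and $g$ is a $\widetilde{\Phi}_g$-element (Lemma \ref{l3.3}), the action of $g$ commutes with that of every element of $G$, so by Schur's lemma $g$ acts by a scalar $\alpha$; the braiding is then $c(X\otimes Y)=\alpha\, Y\otimes X$, a \emph{constant} diagonal braiding, and Heckenberger's list applied to the matrix $(q_{ij})=(\alpha)$ yields exactly $\alpha=-1$ (exterior algebra, any dimension) or $\alpha=\zeta_3^{\pm 1}$ with $\dim V=2$ (Cartan type $A_2$). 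Note, however, that the scalar action is automatic from simplicity; it is not something that ``finite-dimensionality forces,'' as you assert --- that part of your logic is inverted, and ``the projective action of $\langle g\rangle$ may still be scalar'' is a guess where a two-line Schur argument is needed.

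The genuine gap is in your reduction step. You propose to realize $B(V)$ over the covering group $\widehat{G}$ of the full based group and to untwist by a $2$-cochain $J$ with $\partial J=\pi^*\Phi$, citing Proposition \ref{p2.6}. That proposition requires $\Phi$ to be an \emph{abelian} $3$-cocycle on $G$, and this hypothesis fails here by assumption: nondiagonality of the simple module $V$ means precisely that $\widetilde{\Phi}_g$ is nonsymmetric, i.e.\ $\Phi$ is nonabelian on $G$; then $\pi^*\Phi$ is again nonabelian on $\widehat{G}$ (because $\widetilde{\pi^*\Phi}_{\iota(g)}=\widetilde{\Phi}_g\circ(\pi\times\pi)$ is nonsymmetric), hence cannot be a coboundary, since coboundaries are abelian. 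So the ``technical heart'' you identify --- tracking the irreducible projective representation through this untwisting --- is not a difficulty to be overcome but an obstruction that kills the route as you describe it. The missing idea is to restrict to the \emph{support} group first: by Lemma \ref{l2.7}, the braided Hopf algebra $B(V)$ depends only on $G_V=\langle g\rangle$, which is cyclic, so $\Phi|_{G_V}$ is automatically abelian (Lemma \ref{l2.5}(1)) and $V$ acquires a standard basis over $G_V$ (Lemma \ref{l2.9}) even though it is nondiagonal over $G$; with respect to any basis the braiding matrix is the constant $\alpha$, and only then does Proposition \ref{p2.11} (over $\widehat{G_V}$, not $\widehat{G}$) legitimately hand the problem to Heckenberger's classification. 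No analysis of how the projective representation ``splits after pulling back'' is needed, precisely because everything outside $\langle g\rangle$ is invisible to the braided Hopf algebra structure of $B(V)$.
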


\section{Nichols algebras}
In this section, we will study Nichols algebras  without a standard basis in ${_{\k G}^{\k G}\mathcal{YD}^\Phi}$, where $G$ is a finite abelian group and $\Phi$ is a $3$-cocycle on $G$.  The main result is as follows.

\begin{theorem}\label{t3.1}
Suppose that $B(V)\in {_{\k G}^{\k G}\mathcal{YD}^\Phi}$ has no standard basis, then $B(V)$ is infinite dimensional.
\end{theorem}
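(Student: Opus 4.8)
The plan is to locate, inside $B(V)$, a sub-Nichols algebra that is already infinite dimensional; since any Yetter--Drinfeld submodule $W\subseteq V$ gives a sub-Nichols algebra $B(W)\subseteq B(V)$ and a subalgebra of a finite-dimensional algebra is finite dimensional, this forces $B(V)$ to be infinite dimensional. First I would translate the hypothesis: by Lemma \ref{l2.9}, "$B(V)$ has no standard basis" means exactly that $\Phi_{G_V}$ is a non-abelian $3$-cocycle, and by Lemma \ref{l2.5}(1) this forces $\operatorname{rank}G_V\ge 3$ together with a nonvanishing triple term $c_{rst}\neq 0$ in the normal form \eqref{eq2.4}. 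Equivalently, for some simple summand $V_i=V_{g_i}$ the $2$-cocycle $\widetilde{\Phi}_{g_i}$ is non-symmetric, so the alternating bicharacter $b_i(x,y)=\widetilde{\Phi}_{g_i}(x,y)/\widetilde{\Phi}_{g_i}(y,x)$ on $G_V$ is nontrivial and $V_i$ is of nondiagonal type. Using Lemmas \ref{l2.7} and \ref{l2.8} I may, if convenient, normalize the based group to $G=G_V$, since the braided Hopf algebra structure depends only on $G_V$.

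Write $V=\bigoplus_k V_k$ as a sum of simples. If the nondiagonal summand $V_i$ already has $B(V_i)$ infinite dimensional we are done, so by Proposition \ref{p5.3} I may assume $V_i$ is of type (I) (so $g_i$ acts on $V_i$ by $-1$) or of type (II) (so $\dim V_i=2$ and $g_i$ acts by $\zeta_3$). Because $b_i$ is nontrivial, some degree $g_j$ lies outside the radical of $b_i$; I then set $W=V_i\oplus V_j\subseteq V$. Since $G_W=\langle g_i,g_j\rangle$ has rank at most two, $\Phi_{G_W}$ is abelian by Lemma \ref{l2.5}(1), so $W$ has a standard basis and $\langle g_i,g_j\rangle$ acts diagonalizably on $V_i$. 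Diagonalizing, $V_i$ splits into $\langle g_i,g_j\rangle$-eigenlines and $B(W)$ becomes a \emph{diagonal} Nichols algebra whose braiding matrix I can write down explicitly: along the $V_i$-block every diagonal label equals the scalar $c\in\{-1,\zeta_3\}$, while the action of $g_j$ on $V_i$ is non-scalar, two of its eigenlines carrying $g_j$-eigenvalues in a ratio $\omega\neq 1$ that is a nontrivial value of $b_i$ (this is precisely where the triple term $c_{rst}$ enters, through the choice $g_j\notin\operatorname{rad}b_i$). Consequently the two edges joining these two $V_i$-vertices to the $V_j$-vertex carry labels differing by the factor $\omega$.

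By Proposition \ref{p2.11} this diagonal $B(W)$ is twist equivalent to a Nichols algebra over the trivial cocycle with the same generalized Dynkin diagram, so Heckenberger's classification of arithmetic root systems \cite{H4} applies. The remaining task is to show that the rank-$3$ diagram just produced---end vertices labelled $c\in\{-1,\zeta_3\}$ and two edges in ratio $\omega\neq 1$---cannot occur in Heckenberger's finite list, which then forces $B(W)$, and hence $B(V)$, to be infinite dimensional; this case analysis is the main obstacle. When $\omega$ has large order the Cartan-integrality constraints at the $c$-vertices are violated outright. The delicate situation is when $b_i$ takes values in $\{\pm1\}$: then an edge label can become $1$, disconnecting a vertex and leaving a finite diagram, so the single pair $V_i\oplus V_j$ no longer suffices. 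In that case one must pass to a genuine rank-$3$ configuration $V_i\oplus V_j\oplus V_k$ whose support group realizes $c_{rst}\neq 0$; here no standard basis is available, Proposition \ref{p2.11} does not apply, and finiteness has to be excluded by a direct structural computation inside $B(V_i\oplus V_j\oplus V_k)$. Organizing these finitely many residual possibilities---indexed by the type (I)/(II) data, the admissible off-diagonal labels, and the order of $\omega$, together with the selection of three summands actually witnessing the triple term---is what I expect to occupy the bulk of the work, which is presumably why the argument is spread across Sections 3 and 4.
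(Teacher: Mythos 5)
Your reduction follows the paper's own route quite closely: translating ``no standard basis'' into ``$\Phi|_{G_V}$ nonabelian'' via Lemma \ref{l2.9}, passing to the support group, invoking Proposition \ref{p5.3} to force type (I)/(II) on any nondiagonal simple summand, pairing such a summand with a second one to obtain a \emph{diagonal} Nichols algebra, and disposing of most configurations through Proposition \ref{p2.11} and Heckenberger's list. This is exactly the content of Propositions \ref{p3.7} and \ref{p3.8}; the paper organizes the cases by the common dimension of the simple summands (which, by Lemma \ref{l3.5} and Proposition \ref{p3.6}, equals the order of $\widetilde{\Phi}_{g_1}(g_2,g_3)/\widetilde{\Phi}_{g_1}(g_3,g_2)$) rather than by your bicharacter $b_i$, but the case split is the same.

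However, the proposal stops exactly where the real work begins, so it has a genuine gap. You correctly observe that when the relevant summands are $2$-dimensional of type (I) (your ``$b_i$ valued in $\{\pm 1\}$'' case), every pair $V_i\oplus V_j$ can give a \emph{finite}-dimensional diagonal Nichols algebra --- the paper confirms this right after Proposition \ref{p7.6} --- so finiteness must be excluded for the genuine rank-$3$ object $B(V_i\oplus V_j\oplus V_k)$ directly. But you offer no argument for this: ``finiteness has to be excluded by a direct structural computation'' is a restatement of the problem, not a proof. That case is precisely Theorem \ref{t3.9}, whose proof occupies all of Section 4: first a normalization, via change of based group (Lemma \ref{l2.8}) and twisting by a $2$-cochain $J$ with $\partial J=\pi^*\Gamma$ (Proposition \ref{p2.6}, Lemma \ref{l2.6}, Lemma \ref{l7.8}), reducing everything to the single cocycle $\Psi=(-1)^{i_1j_2k_3}$; then, in Proposition \ref{p7.7}, an explicit computation with $E=\ad_{X_1}(\ad_{Y_1}(Z_1))$, $F=\ad_{X_2}(\ad_{Y_2}(Z_2))$, $M=\ad_{Y_1}(\ad_{X_1}(Z_2))$, $N=\ad_{Y_2}(\ad_{X_2}(Z_1))$: one computes their comultiplications, proves their linear independence, shows $\ad_E(M)$ does not lie in the span of $M^2, MN, NM, N^2$, and finally proves by induction on the $\Z^3$-grading that $(\ad_E(M))^n\neq 0$ for all $n$. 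Nothing in your proposal substitutes for this computation, and without it the theorem is not proved. A smaller point: your dismissal of the remaining pair-diagrams ``when $\omega$ has large order'' by appeal to Cartan integrality also needs to be replaced by the actual comparison with Heckenberger's classified diagrams, as is done explicitly in Propositions \ref{p3.7} and \ref{p3.8}.
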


See Remark \ref{r3.14} for the proof.  Since a Nichols algebra of rank $1$ or $2$ always has a standard basis, so our start point will be Nichols algebras of rank $3$.

\subsection{Nichols algebras of rank $3$}
Suppose $B(V)\in {_{\k G}^{\k G} \mathcal{YD}^\Phi}$ is a Nichols algebra of rank $3$. If $\Phi_{G_V}$ is an abelian $3$-cocycle on $G_V$, then the dimension of $B(V)$ can be determined by Proposition \ref{p2.11}.  So in this subsection, we will mainly consider the case that $\Phi_{G_V}$ is nonabelian.

\begin{definition}
Let $\alpha$ be a $2$-cocycle on $G$.  An element $g\in G$ is called an $\alpha$-element if $\alpha(g,h)=\alpha(h,g)$ for all $h\in G$.
\end{definition}

\begin{lemma}\label{l3.3}
Suppose $\Phi$ is a $3$-cocycle on $G$, $g\in G$ , then $g$ is a $\widetilde{\Phi}_g$-element.
\end{lemma}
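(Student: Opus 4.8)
The plan is to verify the defining condition for $g$ to be a $\widetilde{\Phi}_g$-element directly from the formula \eqref{e2.5}, and I expect there to be no genuine obstacle: the claim collapses to a one-line cancellation, and in fact it does not even invoke the $3$-cocycle identity on $\Phi$. Recall that by definition $g$ is a $\widetilde{\Phi}_g$-element precisely when $\widetilde{\Phi}_g(g,h)=\widetilde{\Phi}_g(h,g)$ for every $h\in G$, so the whole argument amounts to substituting the two relevant pairs of arguments into
$$\widetilde{\Phi}_g(x,y)=\frac{\Phi(g,x,y)\Phi(x,y,g)}{\Phi(x,g,y)}$$
and comparing.

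First I would set $x=g$ and $y=h$, which yields
$$\widetilde{\Phi}_g(g,h)=\frac{\Phi(g,g,h)\Phi(g,h,g)}{\Phi(g,g,h)}=\Phi(g,h,g),$$
the numerator and denominator factors $\Phi(g,g,h)$ cancelling. Next I would set $x=h$ and $y=g$, obtaining
$$\widetilde{\Phi}_g(h,g)=\frac{\Phi(g,h,g)\Phi(h,g,g)}{\Phi(h,g,g)}=\Phi(g,h,g),$$
where this time the factors $\Phi(h,g,g)$ cancel. Since both computations produce the same value $\Phi(g,h,g)$, we get $\widetilde{\Phi}_g(g,h)=\widetilde{\Phi}_g(h,g)$ for all $h\in G$, which is exactly the statement that $g$ is a $\widetilde{\Phi}_g$-element.

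The one point worth flagging, rather than any difficulty, is the observation that the cocycle hypothesis on $\Phi$ is never used: the symmetry is forced purely by the shape of the expression in \eqref{e2.5}, since in each of the two substitutions the "middle" entry $g$ of $\widetilde{\Phi}_g$ coincides with either the first or the last slot, causing one of the numerator factors to cancel against the denominator. Thus the lemma is essentially a formal consequence of the definition, and no further machinery is required.
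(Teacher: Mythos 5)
Your proof is correct and is essentially identical to the paper's: both arguments substitute $(x,y)=(g,h)$ and $(x,y)=(h,g)$ into \eqref{e2.5}, cancel the repeated factor, and observe that each side reduces to $\Phi(g,h,g)$. Your remark that the $3$-cocycle identity is never invoked is also accurate of the paper's own proof.
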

\begin{proof}
For each element $h\in G$, we have 
\begin{equation*}
\widetilde{\Phi}_g(g,h)=\frac{\Phi(g,g,h)\Phi(g,h,g)}{\Phi(g,g,h)}=\frac{\Phi(h, g,g)\Phi(g,h,g)}{\Phi(h,g,g)}=\widetilde{\Phi}_g(h,g).
\end{equation*}
\end{proof}

\begin{lemma}\label{l6.3}
Let $G$ be a finite abelian group and $\Phi$ a $3$-cocycle on $G$. Then for any $g_1,g_2,g_3\in G$, we have
\begin{equation}\label{e3.1}
\frac{\widetilde{\Phi}_{g_1}(g_2,g_3)}{\widetilde{\Phi}_{g_1}(g_3,g_2)}=\frac{\widetilde{\Phi}_{g_2}(g_3,g_1)}{\widetilde{\Phi}_{g_2}(g_1,g_3)}=\frac{\widetilde{\Phi}_{g_3}(g_1,g_2)}{\widetilde{\Phi}_{g_3}(g_2,g_1)}.
\end{equation}
\end{lemma}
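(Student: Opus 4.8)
The plan is to prove \eqref{e3.1} by direct substitution of the definition \eqref{e2.5} and then to read off the three claimed equalities as a single cyclic-symmetry property. First I would expand the leftmost ratio. Inserting $\widetilde{\Phi}_{g_1}(g_2,g_3)=\Phi(g_1,g_2,g_3)\Phi(g_2,g_3,g_1)/\Phi(g_2,g_1,g_3)$ together with the analogous expression for $\widetilde{\Phi}_{g_1}(g_3,g_2)$ into the quotient and collecting the six resulting factors, the ratio becomes
\begin{equation*}
\Theta(g_1,g_2,g_3):=\frac{\Phi(g_1,g_2,g_3)\,\Phi(g_2,g_3,g_1)\,\Phi(g_3,g_1,g_2)}{\Phi(g_2,g_1,g_3)\,\Phi(g_1,g_3,g_2)\,\Phi(g_3,g_2,g_1)}.
\end{equation*}
Here the numerator is the product of $\Phi$ over the three cyclic orderings of $(g_1,g_2,g_3)$ and the denominator the product over the three transposed (anti-cyclic) orderings.

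Second, I would observe that the remaining two ratios in \eqref{e3.1} are obtained from the first simply by cyclically relabelling the indices: the middle expression is exactly $\Theta(g_2,g_3,g_1)$ and the rightmost is $\Theta(g_3,g_1,g_2)$. Thus the lemma reduces to the single assertion that $\Theta$ is invariant under the cyclic shift $(g_1,g_2,g_3)\mapsto(g_2,g_3,g_1)$. But this is manifest from the displayed formula: a cyclic permutation of the indices sends each cyclic ordering to another cyclic ordering and each anti-cyclic ordering to another anti-cyclic ordering, so it permutes the three numerator factors among themselves and the three denominator factors among themselves, leaving $\Theta$ unchanged. This yields $\Theta(g_1,g_2,g_3)=\Theta(g_2,g_3,g_1)=\Theta(g_3,g_1,g_2)$, which is precisely \eqref{e3.1}.

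As for the main obstacle: there is none of real substance. The identity is purely formal, and the only care required is the bookkeeping of which of the six orderings of the arguments of $\Phi$ land in the numerator and which in the denominator once the three factors of the form $\Phi(x,g_i,y)$ from \eqref{e2.5} are redistributed across the quotient. It is worth remarking that the argument never invokes the $3$-cocycle condition on $\Phi$, so \eqref{e3.1} in fact holds for an arbitrary function $\Phi\colon G\times G\times G\to\k^{\ast}$; this robustness is exactly what makes the lemma convenient to apply later, since $\Theta$ then behaves as a braiding-type invariant attached to the unordered triple $\{g_1,g_2,g_3\}$.
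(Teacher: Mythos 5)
Your proposal is correct and follows essentially the same route as the paper: both expand the first ratio via the definition \eqref{e2.5} into the quotient of the cyclic by the anti-cyclic products of $\Phi$, and then note that the other two ratios reduce to the same expression (the paper does this by computing them directly, you by observing cyclic invariance of $\Theta$ — the same bookkeeping in different words). Your closing remark that neither the $3$-cocycle condition nor commutativity of $G$ is ever used is accurate and applies equally to the paper's own proof.
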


\begin{proof}
By definition of $\widetilde{\Phi}_{g}$ (see \eqref{e2.5}), we have
\begin{equation*}
\frac{\widetilde{\Phi}_{g_1}(g_2,g_3)}{\widetilde{\Phi}_{g_1}(g_3,g_2)}=\frac{\Phi(g_1,g_2,g_3)\Phi(g_2,g_3,g_1)\Phi(g_3,g_1,g_2)}{\Phi(g_2,g_1,g_3)\Phi(g_1,g_3,g_2)\Phi(g_3,g_2,g_1)}.
\end{equation*}
Similarly, 
\begin{eqnarray*}
\frac{\widetilde{\Phi}_{g_2}(g_3,g_1)}{\widetilde{\Phi}_{g_2}(g_1,g_3)}=\frac{\widetilde{\Phi}_{g_3}(g_1,g_2)}{\widetilde{\Phi}_{g_3}(g_2,g_1)}=
\frac{\Phi(g_1,g_2,g_3)\Phi(g_2,g_3,g_1)\Phi(g_3,g_1,g_2)}{\Phi(g_2,g_1,g_3)\Phi(g_1,g_3,g_2)\Phi(g_3,g_2,g_1)}.
\end{eqnarray*}
Thus we obtain \eqref{e3.1}.
\end{proof}

Next, we consider the structure of simple twisted Yetter-Drinfeld modules of nondiagonal type.
\begin{lemma}\label{l3.5}
Assume $G=\langle g_1,g_2,g_3\rangle$ and $V\in  {_{\k G}^{\k G} \mathcal{YD}^\Phi}$ is a simple Yetter-Drinfeld module with $\deg V= g_1$. Then  
$\dim(V)=n$, where $n$ is the order of $\frac{\widetilde{\Phi}_{g_1}(g_2,g_3)}{\widetilde{\Phi}_{g_1}(g_3,g_2)}$. Moreover, there exists a basis $\{X_1,X_2,\cdots, X_n\}$ of $V$ such that 
\begin{eqnarray}
&&g_1\triangleright X_i=\alpha X_i,\  \ 1\leq i\leq n;\label{e6.5}\\
&&g_2 \triangleright X_i=\beta (\frac{\widetilde{\Phi}_{g_1}(g_2,g_3)}{\widetilde{\Phi}_{g_1}(g_3,g_2)})^{i-1} X_i,\ \ 1\leq i\leq n; \label{e6.6}\\
&& g_3 \triangleright X_i=X_{i+1}, \ g_3\triangleright X_n=\gamma X_1, \  \ 1\leq i\leq n-1.\label{e6.7}
\end{eqnarray}
Here $\alpha,\beta,\gamma\in \k^*$ satisfy
\begin{eqnarray}
\alpha^{m_1}&=&\prod_{i=1}^{m_1-1}\Phi_{g_1}(g_1, g_1^i),\\
\beta^{m_2}&=&\prod_{i=1}^{m_2-1}\Phi_{g_1}(g_2, g_2^i), \\
\gamma^{\frac{m_3}{n}}&=&\prod_{i=1}^{m_3-1}\Phi_{g_1}(g_3, g_3^i),
\end{eqnarray}
 where $m_i=|g_i|$  is the order of $g_i$ for $1\le i\le 3$.  In particular, $V$ is of diagonal type if and only if $\frac{\widetilde{\Phi}_{g_1}(g_2,g_3)} {\widetilde{\Phi}_{g_1}(g_3,g_2)} = 1$.
\end{lemma}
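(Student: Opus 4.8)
The plan is to realise $V$ as a simple module over the twisted group algebra attached to $g_1$ and to exploit the $q$-commutation relation between the generators. Since $V=V_{g_1}$ is a projective $G$-representation for the $2$-cocycle $\widetilde{\Phi}_{g_1}$, relation \eqref{eq2.6} says exactly that $V$ is a module over the twisted group algebra $A=\k^{\widetilde{\Phi}_{g_1}}[G]$, with basis $\{u_h:h\in G\}$ and product $u_eu_f=\widetilde{\Phi}_{g_1}(e,f)\,u_{ef}$, via $u_h\cdot v=h\triangleright v$; simplicity of $V$ as a twisted Yetter-Drinfeld module makes it a simple $A$-module, necessarily finite dimensional because $\dim A=|G|$.

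First I would use Lemma \ref{l3.3} to see that $u_{g_1}$ is central in $A$: for every $h$ one has $u_{g_1}u_h=\widetilde{\Phi}_{g_1}(g_1,h)u_{g_1h}$ and $u_hu_{g_1}=\widetilde{\Phi}_{g_1}(h,g_1)u_{g_1h}$, and these agree since $g_1$ is a $\widetilde{\Phi}_{g_1}$-element. As $\k$ is algebraically closed and $V$ is simple, Schur's lemma forces $u_{g_1}$ to act by a scalar $\alpha$, which is \eqref{e6.5}. Next I set $Y=u_{g_2}$ and $Z=u_{g_3}$; a direct computation in $A$ gives $YZ=\lambda\,ZY$ with $\lambda=\widetilde{\Phi}_{g_1}(g_2,g_3)/\widetilde{\Phi}_{g_1}(g_3,g_2)$, which has order $n$. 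Both $Y,Z$ are invertible, so choosing an eigenvector $X_1$ of $Y$ with nonzero eigenvalue $\beta$ and setting $X_{i+1}:=ZX_i=g_3\triangleright X_i$, the relation $YZ=\lambda ZY$ yields $YX_i=\beta\lambda^{i-1}X_i$ by induction, which is \eqref{e6.6}. Because $\lambda$ has order $n$ the eigenvalues $\beta,\beta\lambda,\dots,\beta\lambda^{n-1}$ are pairwise distinct, so $X_1,\dots,X_n$ are linearly independent.

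To close up the module I would show $Z^n$ is central: from $YZ=\lambda ZY$ and $\lambda^n=1$ one gets $YZ^n=Z^nY$, while $Z^n$ commutes with $Z$ and with the central element $u_{g_1}$; since $g_1,g_2,g_3$ generate $G$, the elements $u_{g_1},Y,Z$ generate $A$, so $Z^n$ is central and hence acts by a scalar $\gamma\in\k^*$. As $X_n=Z^{n-1}X_1$, this gives $g_3\triangleright X_n=ZX_n=Z^nX_1=\gamma X_1$, which is \eqref{e6.7} and shows that $W:=\operatorname{span}\{X_1,\dots,X_n\}$ is stable under $Z$; it is visibly stable under $u_{g_1}$ and $Y$, hence is a nonzero $A$-submodule, and simplicity forces $W=V$, so $\dim V=n$.

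The power relations come from evaluating $u_{g_i}^{m_i}$ in $A$: telescoping the cocycle gives $u_{g_i}^{m_i}=\bigl(\prod_{j=1}^{m_i-1}\widetilde{\Phi}_{g_1}(g_i,g_i^{\,j})\bigr)u_1$ since $g_i^{m_i}=1$. Applying this to $u_{g_1}=\alpha$ and to $Y$ on $X_1$ yields the formulas for $\alpha^{m_1}$ and $\beta^{m_2}$. For $\gamma$ the same computation shows $Z^{m_3}$ is the scalar $\prod_{j=1}^{m_3-1}\widetilde{\Phi}_{g_1}(g_3,g_3^{\,j})$; writing $m_3=qn+r$ with $0\le r<n$ and using $Z^n=\gamma$ reduces $Z^{m_3}$ to $\gamma^qZ^r$, and since $Z^rX_1=X_{r+1}$ is a scalar multiple of $X_1$ only when $r=0$, we conclude $n\mid m_3$ and $\gamma^{m_3/n}=\prod_{j=1}^{m_3-1}\widetilde{\Phi}_{g_1}(g_3,g_3^{\,j})$. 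Finally, a simple twisted Yetter-Drinfeld module is of diagonal type precisely when it is $1$-dimensional, and $\dim V=n=1$ iff $\lambda=1$, giving the last assertion. The main obstacle is the closing-up argument of the third paragraph: establishing that $Z^n$ is scalar is exactly what both pins the dimension to $n$ and produces the wrap-around constant $\gamma$, whereas the remaining steps are bookkeeping with the $2$-cocycle $\widetilde{\Phi}_{g_1}$.
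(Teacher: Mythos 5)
Your proof is correct, and while it rests on the same basic ingredients as the paper's --- Lemma \ref{l3.3} plus Schur's lemma to make $g_1$ act by the scalar $\alpha$, an eigenvector of $g_2$, its $g_3$-orbit, and the commutation scalar $\lambda=\widetilde{\Phi}_{g_1}(g_2,g_3)/\widetilde{\Phi}_{g_1}(g_3,g_2)$ --- the way you close the argument is genuinely different. First, your translation is legitimate: since $V$ is concentrated in the single degree $g_1$, every subspace is graded, so Yetter-Drinfeld submodules coincide with modules over the twisted group algebra $A=\k^{\widetilde{\Phi}_{g_1}}[G]$, and simplicity transfers. The paper then introduces an auxiliary integer $s$ (the minimal period of the $g_3$-orbit up to scalar), builds from the orbit a second basis of $g_3$-eigenvectors $Y_i$ by a discrete Fourier transform, and runs a second orbit argument (the $g_2$-orbit of $Y_1$) to prove $s=n$; the equality $\dim V=n$ and the wrap-around constant $\gamma$ emerge from this two-basis bookkeeping. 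You instead observe that $Z^n=u_{g_3}^n$ commutes with the generators $u_{g_1}$, $Y$, $Z$ of $A$, hence is central and acts by a scalar $\gamma$ by a second application of Schur's lemma; this single step simultaneously closes the orbit at length exactly $n$, produces $\gamma$, and, via your division argument $m_3=qn+r$ with $Z^r$ forced to be scalar, proves $n\mid m_3$ --- a fact the statement needs for $\gamma^{m_3/n}$ to make sense and which the paper obtains separately from $s\mid m_3$ together with $s=n$. Your route is shorter and conceptually cleaner (one centrality observation replaces the Fourier-basis construction); the paper's route is more explicit, in that it actually exhibits the eigenbasis and the eigenvalues $\epsilon_i$ of $g_3$ on $V$, information of independent use in later computations.
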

 \begin{proof}
Let $g\in G$ and $v\in V$, $m=|g|$, by \eqref{e2.5} we have 
\begin{equation*}
\underbrace{g\triangleright (g\triangleright(\cdots (g}_{m}\triangleright v)\cdots ))=\prod_{i=1}^{m-1}\widetilde{\Phi}_{g_1}(g,g^i) v.
\end{equation*}
So it is obvious that the action of each element of $G$ on $V$ is diagonal. Moreover, by Lemma \ref{l3.3}, for any $g\in G$ and $v\in V$ we have  
\begin{equation}
g_1\triangleright(g\triangleright v)=\widetilde{\Phi}_{g_1}(g_1,g)(g_1g)\triangleright v=\widetilde{\Phi}_{g_1}(g,g_1)(gg_1)\triangleright v=g\triangleright(g_1\triangleright v).
\end{equation}
The identity implies that the map 
\[
g_1\colon V\To V,  \ 
v\mapsto g_1\triangleright v
\]
 is an isomorphism of projective $G$-representations associated to $\Phi_{g_1}$. Since $V$ is irreducible, by Schur's Lemma we have 
$$g_1\triangleright v=\alpha v,\ \ \forall v\in V$$
for some scalar $\alpha \in \k^*$. Since $$\underbrace{g_1\triangleright (g_1\triangleright(\cdots (g_1}_{m_1}\triangleright v)\cdots ))=\prod_{i=1}^{m_1-1}\Phi_{g_1}(g_1, g_1^i) v,$$ we get \eqref{e6.5}.

Take $0\ne v\in V$ such that $g_2\triangleright v= \beta v$ for some $\beta\in \k^*$. Let $s$ be the minimal positive integer such that 
\begin{equation}
 \underbrace{g_3\triangleright(g_3\triangleright(\cdots( g_3}_{s}\triangleright  v)\cdots))=\gamma v
\end{equation}
for some $\gamma\in \k^*$. Note that such integer $s$ exits and $s| m_3$ since
\begin{equation}
 \underbrace{g_3\triangleright(g_3\triangleright(\cdots( g_3}_{m_3}\triangleright  v)\cdots))=\prod_{i=1}^{m_3-1}\widetilde{\Phi}_{g_1}(g_3,g_3^i) \ v.
 \end{equation}
Since $g_1\triangleright v=\alpha v$ and $V$ is an irreducible projective $G$-representation with respect to $\Phi_{g_1}$, $V$ must be spanned by 
$$\{v, g_3\triangleright v,\  g_3\triangleright( g_3\triangleright  v), \cdots, \underbrace{g_3\triangleright(g_3\triangleright(\cdots( g_3}_{s-1}\triangleright  v)\cdots))\}.$$ 

In fact, let $X_i= \underbrace{g_3\triangleright(g_3\triangleright(\cdots( g_3}_{i-1}\triangleright  v)\cdots)), 1\leq i\leq s$. Then we have $g_2 \triangleright X_1=\beta X_1$,  and for all $1\leq i\leq s$ we have 
\begin{equation}
\begin{split}
g_2 \triangleright X_i=&g_2 \triangleright (g_3 \triangleright X_{i-1})=\Phi_{g_1}(g_2,g_3) (g_2g_3) \triangleright X_{i-1}\\
=&\frac{\Phi_{g_1}(g_2,g_3) }{\Phi_{g_1}(g_3,g_2)}g_3 \triangleright (g_2 \triangleright X_{i-1}).
\end{split}
\end{equation}
So we get 
\begin{equation}
g_2  \triangleright X_i=(\frac{\Phi_{g_1}(g_2,g_3) }{\Phi_{g_1}(g_3,g_2)})^{i-1}\beta X_i, \ 1\leq i\leq s.
\end{equation}
 
We claim that $\dim V= s$. Let $\zeta_s$ be a primitive $s$-th roots of unit and $\epsilon_1$ be an $s$-th root of $\gamma$. Then  $\epsilon_1,\epsilon_2=\epsilon_1\zeta_s,\cdots,\epsilon_s=\epsilon_1\zeta_s^{s-1}$ are all $s$-th roots of $\gamma$. For all $1\leq i\leq s$, we set 
$$Y_i=X_1+\epsilon_i^{-1}X_2+\cdots + \epsilon_i^{1-l}X_l+\cdots +\epsilon_i^{1-s}X_s.$$
Then for all $1\leq i\leq s$ we have 
\begin{equation}
\begin{split}
g_3  \triangleright Y_i=&g_3  \triangleright (X_1+\epsilon_i^{-1}X_2+\cdots + \epsilon_i^{1-l}X_l+\cdots +\epsilon_i^{1-s}X_s)\\
=& X_2+\epsilon_i^{-1}X_3+\cdots + \epsilon_i^{1-l}X_{l+1} \cdots +\epsilon_i^{2-s}X_s+ \epsilon_i^{1-s}\gamma X_1)\\
=& \epsilon_i X_1+X_2+\epsilon_i^{-1}X_3+\cdots + \epsilon_i^{1-l}X_{l+1} \cdots +\epsilon_i^{s-2}X_s\\
=& \epsilon_i Y_i.
\end{split}
\end{equation}
Then $Y_i$'s are clearly linearly independent since they correspond to different eigenvalues. Then $\{Y_1, \cdots, Y_s\}$ forms a basis of $V$, and $\{\epsilon_i| 1\leq i\leq s\}$ are all eigenvalues of $g_3$ when viewed as a linear transformation on $V$.

Next we prove that $s=n$.  
Notice that 
\begin{equation}
\begin{split}
g_3\triangleright (g_2\triangleright Y_1)=&\Phi_{g_1}(g_3,g_2) (g_3g_2)  \triangleright Y_1\\
=&\frac{\Phi_{g_1}(g_3,g_2) }{\Phi_{g_1}(g_2,g_3) } g_2\triangleright (g_3\triangleright Y_1)\\
=&\frac{\Phi_{g_1}(g_3,g_2) }{\Phi_{g_1}(g_2,g_3) }\epsilon_1 g_2\triangleright  Y_1.
\end{split}
\end{equation}
Inductively we have 
\begin{equation}
\begin{split}
&g_3\triangleright ( \underbrace{g_2\triangleright(g_2\triangleright(\cdots( g_2}_{i}\triangleright  Y_{1})\cdots)))\\
&=\Big(\frac{\Phi_{g_1}(g_3,g_2) }{\Phi_{g_1}(g_2,g_3) }\Big)^i\epsilon_1  \underbrace{g_2\triangleright(g_2\triangleright(\cdots( g_2}_{i}\triangleright  Y_{1})\cdots)).
\end{split}
\end{equation}
So we have $g_3\triangleright ( \underbrace{g_2\triangleright(g_2\triangleright(\cdots( g_2}_{n}\triangleright  Y_{1})\cdots)))=\epsilon_1  \underbrace{g_2\triangleright(g_2\triangleright(\cdots( g_2}_{n}\triangleright  Y_{1})\cdots))$, and hence 
$$ \underbrace{g_2\triangleright(g_2\triangleright(\cdots( g_2}_{n}\triangleright  Y_{1})\cdots)))=k Y_1$$ for some scaler $k\in \k^*$. This implies that $Y_1, g_2\triangleright Y_1,\cdots , \underbrace{g_2\triangleright(g_2\triangleright(\cdots( g_2}_{n-1}\triangleright  Y_{1})\cdots))$ span a sub-Yetter-Drinfeld module of $V$, and hence $V$ since $V$ is simple. So we obtain that $\dim(V)=n$ and hence $s=n$.

 The equations 
\eqref{e6.6} and \eqref{e6.7} follow from 
\begin{eqnarray*}
\underbrace{g_2\triangleright (g_2\triangleright(\cdots (g_2}_{m_2}\triangleright X_1)\cdots ))&=&\prod_{i=1}^{m_2-1}\Phi_{g_1}(g_2, g_2^i) X_1,\\
\underbrace{g_3\triangleright (g_3\triangleright(\cdots (g_3}_{m_3}\triangleright X_1)\cdots ))&=&\prod_{i=1}^{m_3-1}\Phi_{g_1}(g_3, g_3^i) X_1.
\end{eqnarray*}

The last statement is obvious and the  proof is completed.
\end{proof}

By this lemma, we have the following important proposition.
\begin{proposition}\label{p3.6}
Let $V=V_1\oplus V_2\oplus V_3\in {_{\k G}^{\k G} \mathcal{YD}^\Phi}$ be a direct sum of simple twisted Yetter-Drinfeld modules with $G=G_V$. Then we have 
\begin{equation}\label{e6.15}
\dim(V_1)=\dim(V_2)=\dim(V_3).
\end{equation}
\end{proposition}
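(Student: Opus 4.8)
The plan is to read each $\dim V_i$ off Lemma \ref{l3.5} and then invoke the symmetry identity of Lemma \ref{l6.3} to see that the three resulting orders coincide; the proposition is essentially a direct corollary of these two lemmas.

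First I would note that, by hypothesis, $G=G_V=\langle g_1,g_2,g_3\rangle$ with $g_i=\deg V_i$, so the hypotheses of Lemma \ref{l3.5} are met once we single out $g_i$ as the distinguished generator. Applying that lemma to $V_1$ shows that $\dim V_1$ equals the order of $\widetilde{\Phi}_{g_1}(g_2,g_3)/\widetilde{\Phi}_{g_1}(g_3,g_2)$. Relabelling the generating set and applying the lemma to $V_2$, with $g_2$ distinguished and $g_1,g_3$ the remaining two, shows that $\dim V_2$ equals the order of $\widetilde{\Phi}_{g_2}(g_1,g_3)/\widetilde{\Phi}_{g_2}(g_3,g_1)$; likewise $\dim V_3$ equals the order of $\widetilde{\Phi}_{g_3}(g_1,g_2)/\widetilde{\Phi}_{g_3}(g_2,g_1)$.

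Next I would apply Lemma \ref{l6.3}, which states that the three numbers $\widetilde{\Phi}_{g_1}(g_2,g_3)/\widetilde{\Phi}_{g_1}(g_3,g_2)$, $\widetilde{\Phi}_{g_2}(g_3,g_1)/\widetilde{\Phi}_{g_2}(g_1,g_3)$, and $\widetilde{\Phi}_{g_3}(g_1,g_2)/\widetilde{\Phi}_{g_3}(g_2,g_1)$ are equal. The ratio computing $\dim V_2$ above is exactly the inverse of the second of these, and a root of unity and its inverse have the same multiplicative order; the ratio computing $\dim V_3$ coincides with the third on the nose. Hence $\dim V_1$, $\dim V_2$, and $\dim V_3$ all equal the order of the common value, which is \eqref{e6.15}.

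The only point that needs care — and the sole content beyond quoting the two lemmas — is the bookkeeping of argument orderings. Lemma \ref{l3.5} is stated for a fixed labelling in which the distinguished generator occupies the first slot, so applying it to $V_2$ and $V_3$ requires reordering the generators $g_1,g_2,g_3$, and one must then check whether each resulting ratio matches the corresponding term of Lemma \ref{l6.3} directly or only after inversion. Since inversion preserves order this causes no genuine difficulty, and so I expect the entire argument to be very short.
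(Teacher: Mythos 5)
Your proof is correct and follows essentially the same route as the paper's: read $\dim V_i$ off Lemma \ref{l3.5} for each $i$ and conclude with the symmetry identity of Lemma \ref{l6.3}. The only cosmetic difference is that the paper orders the generators when applying Lemma \ref{l3.5} to $V_2$ so that its ratio matches the middle term of Lemma \ref{l6.3} on the nose, namely $\dim (V_2)=\bigl|\widetilde{\Phi}_{g_2}(g_3,g_1)/\widetilde{\Phi}_{g_2}(g_1,g_3)\bigr|$, whereas you obtain the inverse ratio and correctly note that inversion preserves multiplicative order.
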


\begin{proof}
Let $\deg V_i=g_i$, $1\leq i\leq 3$. Then $G=\langle g_1, g_2, g_3\rangle$ since  $G_V=G$. By Lemma \ref{l3.5}, we have $\dim (V_1)=\Big| \frac{\widetilde{\Phi}_{g_1}(g_2,g_3)}{\widetilde{\Phi}_{g_1}(g_3,g_2)}\Big|$,
 $\dim (V_2)=\Big| \frac{\widetilde{\Phi}_{g_2}(g_3,g_1)}{\widetilde{\Phi}_{g_2}(g_1,g_3)}\Big|$,  $\dim (V_3)=\Big| \frac{\widetilde{\Phi}_{g_3}(g_1,g_2)}{\widetilde{\Phi}_{g_3}(g_2,g_1)}\Big|$, and \eqref{e6.15} follows from Lemma \ref{l6.3}.
\end{proof}

Now we can consider nondiagonal Nichols algebras of rank $3$. Firstly, we have the following propositions.
\begin{proposition}\label{p3.7}
Let $V=V_1\oplus V_2\oplus V_3\in {_{\k G}^{\k G} \mathcal{YD}^\Phi}$ be a direct sum of simple twisted Yetter-Drinfeld modules with $G=G_V$. If $\dim (V_1)=\dim (V_2)=\dim (V_3)\geq 3$, then $B(V)$ is infinite dimensional.
\end{proposition}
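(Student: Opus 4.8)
The plan is to exploit the explicit module structure from Lemma~\ref{l3.5} to exhibit an infinite family of linearly independent elements in $B(V)$, by producing a rank-$2$ sub-configuration whose associated Nichols algebra is already known to be infinite dimensional. Write $\deg V_i=g_i$, so $G=\langle g_1,g_2,g_3\rangle$. By Proposition~\ref{p3.6} all three simple modules share a common dimension $n=\dim V_i\geq 3$, and by Lemma~\ref{l3.5} this $n$ is the order of the ratio $q:=\widetilde{\Phi}_{g_1}(g_2,g_3)/\widetilde{\Phi}_{g_1}(g_3,g_2)$, a primitive $n$-th root of unity. The first step is to fix standard-looking bases $\{X_1,\dots,X_n\}$ of $V_1$ (and analogously for $V_2,V_3$) satisfying \eqref{e6.5}--\eqref{e6.7}, recording in particular the diagonal action $g_1\triangleright X_i=\alpha X_i$ and the $q$-scaled action $g_2\triangleright X_i=\beta q^{i-1}X_i$.

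The key idea is that even though $V$ as a whole has no standard basis, one can \emph{restrict attention to individual basis vectors} which are simultaneous eigenvectors for the cyclic actions, and thereby extract a genuine diagonal braiding on a well-chosen $2$-dimensional subspace. Concretely, I would select one eigenvector from $V_1$ and one from $V_2$ (say $X_1\in V_1$ and a suitable $Y_1\in V_2$ chosen so that both have definite $G$-degrees $g_1,g_2$ and diagonalize the relevant commuting actions), and compute the self-braiding and cross-braiding scalars of the pair $\{X_1,Y_1\}$ using the braiding $R(A\otimes B)=(\deg A)\triangleright B\otimes A$ together with formulas \eqref{e6.5}--\eqref{e6.7}. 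The resulting $2\times 2$ braiding matrix $(p_{kl})$ will have off-diagonal product $\widetilde{p_{12}}=p_{12}p_{21}$ controlled precisely by the ratio $q$, hence a primitive $n$-th root of unity with $n\geq 3$. The second step is then to invoke the standard dichotomy: a rank-$2$ diagonal braiding whose exchange parameter $\widetilde{p_{12}}$ is a root of unity of order $\geq 3$ (together with whatever the diagonal entries $p_{11},p_{22}$ turn out to be) cannot lie among Heckenberger's finite arithmetic root systems, so the sub-Nichols algebra it generates inside $B(V)$ is infinite dimensional. Since $B(V)$ is $\Z^n$-graded and the chosen generators sit in distinct homogeneous components, an infinite-dimensional subalgebra forces $B(V)$ itself to be infinite dimensional.

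The main obstacle I anticipate is the bookkeeping at the braiding-computation stage: because $V_1,V_2,V_3$ need not admit a \emph{joint} diagonalization (that failure is exactly what ``no standard basis'' encodes), one must be careful that the chosen vectors $X_1\in V_1$ and $Y_1\in V_2$ really are eigenvectors for \emph{all} the group elements entering the braiding, not merely for $g_1$. The cocycle identity from Lemma~\ref{l6.3} is what guarantees the three ratios agree, which should let me align the eigenbases of $V_1$ and $V_2$ compatibly; verifying that the cross-terms $g_1\triangleright Y_1$ and $g_2\triangleright X_1$ produce scalars rather than mixing basis vectors is the delicate point. A secondary subtlety is confirming that the extracted $2$-dimensional braided subspace is genuinely \emph{sub-braided} (closed under $R$ up to scalars) so that its Nichols algebra embeds in $B(V)$; this should follow because each $X_1,Y_1$ is homogeneous of a single $G$-degree and the braiding only rescales such homogeneous eigenvectors.

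Failing the clean extraction above, the fallback plan is to apply Proposition~\ref{p2.11}-style reduction directly: pass to the covering group $\widehat{G}$ where $\pi^*\Phi$ trivializes on the relevant rank-$2$ sub-datum, twist $B(V)$ to an honest diagonal Nichols algebra over $\widehat{G_V}$ with the \emph{same} generalized Dynkin diagram, and then read off infinite-dimensionality from the order-$n\geq 3$ edge label via Heckenberger's list. I expect the first approach to be cleaner, but this second route sidesteps the joint-diagonalization worry entirely at the cost of invoking the reduction machinery on a carefully chosen rank-$2$ piece.
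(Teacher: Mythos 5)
Your proposal breaks down at its central step: the ``standard dichotomy'' you invoke does not exist. It is simply false that a rank-$2$ diagonal braiding whose exchange parameter $p_{12}p_{21}$ is a root of unity of order $\geq 3$ must fall outside Heckenberger's classification: Cartan type $A_2$ at a root of unity $q$ (vertices $q,q$, edge $q^{-1}$) is a finite arithmetic root system, and, more to the point here, since the relevant simple modules are of type (I) your chosen vectors have self-braidings $p_{11}=p_{22}=-1$, and the diagram consisting of two vertices labelled $-1$ joined by an edge with \emph{any} label $r\neq 1$ is in Heckenberger's list, with finite-dimensional Nichols algebra. So no single pair $X_1\in V_1$, $Y_1\in V_2$ can ever produce a contradiction. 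There is also a computational slip feeding into this: for a fixed pair $(X_i,Y_j)$ the exchange parameter is $\beta_1\beta_2\,q^{\,i-j}$, where $\beta_1,\beta_2$ are the structure constants of Lemma \ref{l3.5} and $q=\widetilde{\Phi}_{g_1}(g_2,g_3)/\widetilde{\Phi}_{g_1}(g_3,g_2)$; in particular for $(X_1,Y_1)$ it is just $\beta_1\beta_2$, which is \emph{not} controlled by $q$ at all. The powers of $q$ only become visible when one compares \emph{several} eigenvectors $X_1,X_2,\dots$ of the same simple module against several $Y_j$'s, which is exactly the information a rank-$2$ extraction throws away. Your fallback plan is again a reduction to ``a carefully chosen rank-$2$ piece,'' so it fails for the same reason.

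This is precisely why the paper's proof works with a rank-$6$ diagonal subspace. After first disposing of the case where some $V_i$ is not of type (I) via Proposition \ref{p5.3} (a step your proposal omits; type (II) is excluded since $\dim V_i\geq 3$), the paper forms $W=\k\{X_1,X_2,X_3,Y_1,Y_2,Y_3\}\subset V_1\oplus V_2$, which exists exactly because $\dim V_i=n\geq 3$, and which is of diagonal type because $\Phi$ restricted to $\langle g_1,g_2\rangle$ is abelian (Lemma \ref{l2.5}). The nine possible edges between the $X_i$'s and $Y_j$'s carry labels $\beta_1\beta_2 q^{\,i-j}$ with $i-j\in\{0,\pm1,\pm2\}$, and since $q$ has order $n\geq 3$, in every case (whether or not $\beta_1\beta_2$ equals one of the powers $q^{-k}$, $k\in\{0,\pm1,\pm2\}$) the resulting generalized Dynkin diagram contains a hexagon with all six vertices labelled $-1$; no such diagram occurs in Heckenberger's list, whence $B(W)$, $B(V_1\oplus V_2)$ and $B(V)$ are infinite dimensional. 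If you want to salvage your write-up, replace the single pair $\{X_1,Y_1\}$ by this six-dimensional braided subspace; nothing short of that will see the hypothesis $n\geq 3$.
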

\begin{proof}
Let $\deg V_i= g_i$, $1\leq i\leq 3$. Since $G=G_V$, we have $G=\langle g_1,g_2,g_3\rangle$. Let $|g_i|=m_i, 1\leq i\leq 3$, and $n=\Big|\frac{\widetilde{\Phi}_{g_1}(g_2,g_3)}{\widetilde{\Phi}_{g_1}(g_3,g_2)}\Big|$. Then $\dim (V_1)=\dim (V_2)=\dim (V_3)=n$ by Proposition \ref{p3.6}. According to Proposition \ref{p5.3}, if $V_i$ is not a simple Yetter-Drinfeld module of type (I) for some $i\in \{1,2, 3\}$, then $B(V_i)$ must be infinite dimensional, hence $B(V)$ is infinite dimensional. 

In the following, we assume that $V_1,V_2,V_3$ are simple Yetter-Drinfeld modules of type (I). By Lemma \ref{l3.5}, $V_1$ has a basis $\{X_1, X_2,\cdots, X_n\}$ such that 
\begin{eqnarray}
&&g_1\triangleright X_i=- X_i,\  \ 1\leq i\leq n,\label{eq3.16}\\
&&g_2 \triangleright X_i=\beta_1 (\frac{\widetilde{\Phi}_{g_1}(g_2,g_3)}{\widetilde{\Phi}_{g_1}(g_3,g_2)})^{i-1} X_i,\ \ 1\leq i\leq n,
\end{eqnarray}
where $\beta_1^{m_2}=\prod_{i=1}^{m_2-1}\Phi_{g_1}(g_2, g_2^i)$. Here \eqref{eq3.16} follows from the fact that $V_1$ is a simple Yetter-Drinfeld module of type (I).

Similarly, $V_2$ also has a basis $\{Y_1,Y_2,\cdots, Y_n\}$ such that 
\begin{eqnarray}
&&g_2\triangleright Y_i=- Y_i,\  \ 1\leq i\leq n,\\
&&g_1 \triangleright Y_i=\beta_2 (\frac{\widetilde{\Phi}_{g_2}(g_1,g_3)}{\widetilde{\Phi}_{g_2}(g_3,g_1)})^{i-1} Y_i,\ \ 1\leq i\leq n,
\end{eqnarray}
where $\beta_2^{m_1}=\prod_{i=1}^{m_1-1}\Phi_{g_2}(g_1, g_1^i)$.
Let $H=G_{V_1\oplus V_2}$ and $\Psi=\Phi_H$. Since $H$ is direct sum of two cyclic groups, $\Psi$ must be an abelian $3$-cocycle on $H$ by Lemma \ref{l2.5}. This implies $V_1\oplus V_2$ is a Yetter-Drinfeld module of diagonal type in ${_{\k H}^{\k H} \mathcal{YD}^\Psi}$.
In the following, let  $W=\k \{X_1,X_2,X_3,Y_1,Y_2,Y_3\}$ be a submodule of $V_1\oplus V_2\in {_{\k H}^{\k H} \mathcal{YD}^\Psi}.$ We will consider the generalized Dynkin diagram $\mathcal{D}(W)$.

If $\beta_1\beta_2\neq (\frac{\widetilde{\Phi}_{g_2}(g_1,g_3)}{\widetilde{\Phi}_{g_2}(g_3,g_1)})^k$ for $k\in \{0,\pm1, \pm 2\}$, then the generalized Dynkin diagram $\mathcal{D}(W)$ (with unlabeled edges) associated to $B(W)$ is
\[ {\setlength{\unitlength}{1mm}
\THexagon{}{$-1$}{$-1$}{$-1$}{$-1$}{$-1$}{$-1$}} \quad .\]
By Propsition \ref{p2.11}, $B(W)$ is twist equivalent to a Nichols algebra $B(U)$  in ${_{\k \widehat{H}}^{\k \widehat{H}}\mathcal{YD}}$, and $B(U)$ has the same generalized Dynkin diagram with $B(W)$. Comparing the classification of generalized Dynkin diagrams of finite-dimensional Nichols algebras in \cite{H4}, $B(U)$ and hence $B(W)$ must be infinite dimensional. This implies that $B(V_1\oplus V_2)$ is infinite dimensional.

If $\beta_1\beta_2=(\frac{\widetilde{\Phi}_{g_2}(g_1,g_3)}{\widetilde{\Phi}_{g_2}(g_3,g_1)})^k$ for some $k\in \{0,\pm 1,\pm 2\}$, then the generalized Dynkin diagram  $\mathcal{D}(W)$ (with unlabeled edges) has a subdiagram of the form
\[ {\setlength{\unitlength}{1mm}
\Hexagon{}{$-1$}{$-1$}{$-1$}{$-1$}{$-1$}{$-1$}} \quad .\]
Comparing the classification result in \cite{H4}, again we have $B(W)$ is infinite dimensional, and hence $B(V_1\otimes V_2)$ is infinite dimensional. 

In either case $B(V_1\oplus V_2)$ is infinite dimensional, hence so is $B(V)$ since $B(V_1\oplus V_2)$ is a subalgebra of $B(V)$.
\end{proof}

\begin{proposition}\label{p3.8}
Let $V=V_1\oplus V_2\oplus V_3\in {_{\k G}^{\k G} \mathcal{YD}^\Phi}$ be a direct sum of simple twisted Yetter-Drinfeld modules and $G=G_V$. Assume that $\dim (V_1)=\dim (V_2)=\dim (V_3)=2$, and at least one of $V_i$'s is of 
type (II). Then $B(V)$ is infinite dimensional.
\end{proposition}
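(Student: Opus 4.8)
The plan is to reduce to the diagonal situation exactly as in the proof of Proposition \ref{p3.7}, the new ingredient being that a type (II) summand contributes a self-braiding by a primitive cube root of unity rather than by $-1$. First I would dispose of the trivial case: if some $V_i$ is not of type (I) or (II), then $B(V_i)$ is already infinite dimensional by Proposition \ref{p5.3} and we are done; so I may assume every $V_i$ is of type (I) or (II), and after reordering that $V_1$ is of type (II). Writing $\deg V_i=g_i$, so that $G=\langle g_1,g_2,g_3\rangle$, Proposition \ref{p3.6} together with Lemma \ref{l6.3} forces all three ratios $\widetilde{\Phi}_{g_i}(g_j,g_k)/\widetilde{\Phi}_{g_i}(g_k,g_j)$ to equal $-1$, since the common dimension is $2$. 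Applying Lemma \ref{l3.5} to $V_1$ and to $V_2$ then yields standard bases $\{X_1,X_2\}$ and $\{Y_1,Y_2\}$ with $g_1\triangleright X_i=\zeta_3 X_i$, $g_2\triangleright X_i=\beta_1(-1)^{i-1}X_i$, $g_2\triangleright Y_i=\alpha_2 Y_i$ and $g_1\triangleright Y_i=\beta_2(-1)^{i-1}Y_i$, where $\alpha_2=-1$ or $\zeta_3$ according as $V_2$ is of type (I) or (II).

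Next I would pass to the diagonal picture on the pair $V_1\oplus V_2$. Its support group is $H=\langle g_1,g_2\rangle$, a product of at most two cyclic groups, so $\Phi_H$ is abelian by Lemma \ref{l2.5} and $V_1\oplus V_2$ is a diagonal Yetter-Drinfeld module with standard basis $\{X_1,X_2,Y_1,Y_2\}$. By Proposition \ref{p2.11} its Nichols algebra is twist equivalent to one in ${_{\k\widehat{H}}^{\k\widehat{H}}\mathcal{YD}}$ with the \emph{same} generalized Dynkin diagram, so it suffices to read off that diagram and compare with Heckenberger's list \cite{H4}. A direct computation of the braiding coefficients $q_{ab}=(\deg a)\triangleright b$ gives self-braidings $\zeta_3$ at $X_1,X_2$, self-braidings $\alpha_2$ at $Y_1,Y_2$, an edge $\widetilde{q}_{X_1X_2}=\zeta_3^2$ inside $V_1$ (this is the decisive new feature, absent in the type (I) case), an edge $\widetilde{q}_{Y_1Y_2}=\alpha_2^2$, and cross edges $\widetilde{q}_{X_iY_j}=\beta_1\beta_2(-1)^{i+j}$; in particular the two cross edges issuing from any fixed vertex differ by a sign.

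The proof then proceeds by a case analysis on the scalar $\beta_1\beta_2$, paralleling the split in Proposition \ref{p3.7}. If $\beta_1\beta_2\neq\pm1$, then all four cross edges are present, and the subdiagram on $\{X_1,X_2,Y_1\}$ is a triangle whose vertices are labelled $\zeta_3,\zeta_3,\alpha_2$ and whose edges are $\zeta_3^2,\beta_1\beta_2,-\beta_1\beta_2$. If $\beta_1\beta_2=\pm1$, then two of the cross edges vanish and the surviving diagram is a connected path or $4$-cycle carrying the $A_2$-block consisting of two vertices labelled $\zeta_3$ joined by an edge labelled $\zeta_3^2$, together with $-1$-labelled cross edges to the $Y$-vertices. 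In either situation I would check against the classification of finite-dimensional arithmetic root systems \cite{H4} that the resulting connected subdiagram does not occur, so that $B(V_1\oplus V_2)$, and hence $B(V)$ (which contains it as a subalgebra), is infinite dimensional.

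The main obstacle is precisely this last comparison. Unlike Proposition \ref{p3.7}, where every label was $-1$ and the excluded hexagons could be named outright, here the decisive block is the cube-root $A_2$-diagram with two $\zeta_3$-vertices joined by an edge $\zeta_3^2$, which on its own \emph{is} finite dimensional; the contradiction must therefore be extracted from its interaction with the cross edges, using the special relation $\widetilde{q}_{X_2Y_1}=-\widetilde{q}_{X_1Y_1}$. Verifying uniformly, over all values of the root of unity $\beta_1\beta_2$ and over both possibilities for $\alpha_2$, that no such configuration appears among Heckenberger's finite diagrams is the delicate and computational heart of the argument.
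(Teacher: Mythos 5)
Your proposal follows essentially the same route as the paper's proof: reduce to $B(V_1\oplus V_2)$ with $V_1$ of type (II), observe that $\Phi$ restricted to the rank-two support group $H=\langle g_1,g_2\rangle$ is abelian so the pair is diagonal, compute the generalized Dynkin diagram from the bases of Lemma \ref{l3.5} with a case split on the type of $V_2$ and on whether $\beta_1\beta_2=\pm1$, and then compare with Heckenberger's classification via Proposition \ref{p2.11}. The four diagrams you obtain (the path with the $\zeta_3^2$-labelled middle edge, $K_4$ minus an edge, $K_4$, and the $4$-cycle) are exactly the ones displayed in the paper's proof, and the final check against \cite{H4} that you flag as the remaining computational step is precisely the step the paper performs (and simply asserts) there.
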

\begin{proof}
Without loss of generality, we can assume that $V_1$ is a simple twisted Yetter-Drinfeld module of type (II). In what follows we will prove that $B(V_1\oplus V_2)$ (similarly for $B(V_1\oplus V_3)$) is infinite dimensional, which forces $B(V)$ is infinite dimensional.

Let $g_i=\deg(V_i)$ and $m_i=|g_i|$ for $1\leq i\leq 3$, $H:=G_{V_1\oplus V_2}=\langle g_1, g_2\rangle$ and $\Psi=\Phi|_{H}$. Then $\Psi$ is an abelian $3$-cocycle on $H$ by Lemma \ref{l2.5}, and hence $B(V_1\oplus V_2)$ is a Nichols algebra of diagonal type in  ${_{\k H}^{\k H} \mathcal{YD}^\Psi}$. By Lemma \ref{l3.5}, $V_1$ has a basis $\{X_1, X_2\}$ such that 
\begin{eqnarray}
&&g_1\triangleright X_i=\zeta_3 X_i, \ i=1,2;\\
&&g_2\triangleright X_1=\beta_1 X_1,\ g_2\triangleright X_2=-\beta_1 X_2.
\end{eqnarray}
Here $\beta_1$ is a root of unit satisfying $\beta_1^{m_2}=\prod_{i=1}^{m_2-1}\Phi_{g_1}(g_2, g_2^i)$. In the following, we need to consider two cases: (a) $V_2$ has type (I), (b) $V_2$ has type (II).

(a). Since $\dim(V_2)=2$, by Lemma \ref{l3.5},  $V_2$ has a basis $\{Y_1, Y_2\}$ such that 
\begin{eqnarray}
&&g_2\triangleright Y_i=- Y_i, \ i=1,2;\\
&&g_1\triangleright Y_1=\beta_2 Y_1,\ g_2\triangleright Y_2=-\beta_2 Y_2.
\end{eqnarray}
Here $\beta_2^{m_1}=\prod_{i=1}^{m_1-1}\Phi_{g_2}(g_1, g_1^i)$. 
If $\beta_1\beta_2\neq \pm 1$, the generalized Dynkin diagram  $\mathcal{D}(V_1\oplus V_2)$ (with unlabeled edges) of $B(V_1\oplus V_2)$ is
\[ {\setlength{\unitlength}{1.5mm}
\Tchainfour{}{$\zeta_3$}{$\zeta_3$}{$-1$}{$-1$}} \quad .\]
 If $\beta_1\beta_2= \pm 1$, the generalized Dynkin diagram of $\mathcal{D}(V_1\oplus V_2)$ is
\[ {\setlength{\unitlength}{1.5mm}
\Dchainfour{}{$-1$}{$-1$}{$\zeta_3$}{$\zeta_3^2$}{$\zeta_3$}{$-1$}{$-1$}} \quad .\]
By Propsition \ref{p2.11}, $B(V_1\oplus V_2)$ is twist equivalent to some Nichols algebra $B(U)$  in ${_{\k \widehat{H}}^{\k \widehat{H}}\mathcal{YD}}$, and $B(U)$ have the same generalized Dynkin diagram with $B(V_1\oplus V_2)$. By checking up the classification of generalized Dynkin diagrams of finite-dimensional Nichols algebras of diagonal type in \cite{H4},  we can see that $B(U)$ and hence $B(V_1\oplus V_2)$ is infinite dimensional.

(b). By Lemma \ref{l3.5},  $V_2$ has a basis $\{Y_1, Y_2\}$ such that 
\begin{eqnarray}
&&g_2\triangleright Y_i=\zeta_3^k Y_i, \ i=1,2; \\
&&g_1\triangleright Y_1=\beta_2 Y_1,\ g_2\triangleright Y_2=-\beta_2 Y_2.
\end{eqnarray}
Here $\beta_2^{m_1}=\prod_{i=1}^{m_1-1}\Phi_{g_2}(g_1, g_1^i)$ and $k\in \{1,2\}$ is a fixed number. 
If $\beta_1\beta_2\neq \pm 1$, then the generalized Dynkin diagram $\mathcal{D}(V_1\oplus V_2)$ (with unlabeled edges) of $B(V_1\oplus V_2)$ is 
\[ {\setlength{\unitlength}{1.5mm}
\Fchainfour{}{$\zeta_3^k$}{$\zeta_3^k$}{$\zeta_3$}{$\zeta_3$}} \quad .\]
If $\beta_1\beta_2= \pm 1$, then the generalized Dynkin diagram $\mathcal{D}(V_1\oplus V_2)$ (with unlabeled edges) of $B(V_1\oplus V_2)$ is
\[ {\setlength{\unitlength}{1.5mm}
\Echainfour{}{$\zeta_3^k$}{$\zeta_3^k$}{$\zeta_3$}{$\zeta_3$}} \quad .\]
In both cases $B(V_1\oplus V_2)$ is infinite dimensional by the classification of  generalized Dynkin diagrams of finite-dimensional Nichols algebras of diagonal type in \cite{H4}. 
\end{proof}

It remains to consider $B(V_1\oplus V_2\oplus V_3)$ where $V_1,V_2,V_3$ are simple twisted Yetter-Drinfeld modules of type (I) and $\dim(V_i)=2$, $1\leq i\leq 3$. We have the following theorem.

\begin{theorem}\label{t3.9}
Let $V_1, V_2, V_3\in  {_{\k G}^{\k G} \mathcal{YD}^\Phi}$ be simple Yetter-Drinfeld modules of type (I) such that $\dim(V_i)=2$,  $\deg(V_i)=g_i, 1\leq i\leq 3$ and $G=\langle g_1\rangle \times \langle g_2\rangle \times g_3\rangle$. Then the Nichols algebra $B(V_1\oplus V_2\oplus V_3)$ is infinite dimensional.
\end{theorem}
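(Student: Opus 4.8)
The plan is to reduce, as in Propositions \ref{p3.7} and \ref{p3.8}, to the generalized Dynkin diagrams of the three rank-two sub-objects $V_i\oplus V_j$, and only when these all turn out to be finite-dimensional to confront the genuinely non-diagonal rank-three situation. First I would invoke Lemma \ref{l3.5} to fix, for each $i$, a basis of $V_i$ on which $g_i$ acts by $-1$ (type (I)) and on which two of the three generators act diagonally; since $\dim V_i=2$ the common ratio of Lemma \ref{l6.3} equals $-1$, so on each $V_i$ the two remaining degree-generators \emph{anticommute}, i.e.\ $g_j\triangleright(g_k\triangleright v)=-\,g_k\triangleright(g_j\triangleright v)$. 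In particular each $B(V_i)$ is a finite-dimensional exterior-type algebra, and $B(V_i\oplus V_j)$ is a subalgebra of $B(V)$ for every pair.

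Next I would treat the pairs. For each pair $\langle g_i,g_j\rangle$ is generated by two elements, so $\Phi|_{\langle g_i,g_j\rangle}$ is abelian (Lemma \ref{l2.5}) and $V_i\oplus V_j$ has a standard basis; a direct computation shows its generalized Dynkin diagram has four vertices all labelled $-1$, the four cross-edges carrying labels $\pm t_{ij}$ for a single scalar $t_{ij}$ built from the cross-eigenvalues. If some $t_{ij}\notin\{\pm1\}$ all four cross-edges are present, the diagram is a $4$-cycle with all vertices $-1$, and Proposition \ref{p2.11} together with Heckenberger's classification \cite{H4} forces $B(V_i\oplus V_j)$, hence $B(V)$, to be infinite-dimensional. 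Thus I may assume $t_{ij}\in\{\pm1\}$ for all three pairs, in which case each pairwise diagram degenerates into two disjoint copies of $\circ^{-1}\!-\!\circ^{-1}$ (the $A_2$-diagram at $-1$), all finite-dimensional; the pairwise reduction is then exhausted.

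The main obstacle is precisely this remaining case, where the three pairwise Nichols algebras are finite but $V=V_1\oplus V_2\oplus V_3$ is non-diagonal. Here no reduction to Heckenberger's diagonal list is available: the anticommutation forced by the ratio $-1$ means that no nonzero vector of $V$ is a simultaneous eigenvector for all of $g_1,g_2,g_3$, so $V$ admits no standard basis and, more strongly, no diagonal sub- or quotient-braided-vector-space involving all three degrees; moreover pulling back along any abelian cover of $G$ preserves the non-symmetry of $\widetilde{\Phi}_{g_i}$, so the reduction of Proposition \ref{p2.9} does not apply. To overcome this I would pass to the reflection formalism for Nichols algebras of \emph{semisimple} Yetter--Drinfeld modules: regarding $V_1,V_2,V_3$ as the three simple summands, the finite pairwise algebras computed above determine the Cartan entries $a_{ij}$ through the vanishing order of $(\operatorname{ad}_c V_i)^m(V_j)$, and I expect the resulting rank-three generalized Cartan datum to be of non-finite (indeed affine $A_2$-) type, whose Weyl groupoid and root system are infinite. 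Since a finite-dimensional Nichols algebra of a semisimple Yetter--Drinfeld module must have a finite root system, this yields the desired contradiction.

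As an alternative route staying closer to the tools of this paper, one can realize the anticommuting actions through a central extension, write $B(V)$ as a Nichols algebra over the resulting non-abelian group, and exhibit an explicit infinite PBW-family (equivalently, compute the first coefficients of the Hilbert series and rule out the Poincar\'e duality of a finite-dimensional graded algebra). Either way, establishing infinite-dimensionality in this last configuration---where the pairwise diagrams are all finite and disconnect---is the crux of the theorem and the step requiring genuinely new input beyond the diagonal classification.
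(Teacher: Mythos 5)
Your reduction and pairwise analysis are sound and essentially parallel the paper's Proposition \ref{p7.6}: when some cross-constant $t_{ij}\notin\{\pm 1\}$, the four-vertex diagram with all labels $-1$ is not in Heckenberger's list, so a rank-two Nichols subalgebra is already infinite-dimensional. The genuine gap is in the remaining case, which you yourself identify as the crux: there you offer only an expectation, not a proof. The sentence ``I expect the resulting rank-three generalized Cartan datum to be of non-finite (indeed affine $A_2$-) type'' is unverified, and moreover the reflection formalism of \cite{AHS} that you invoke is developed for Nichols algebras in ${}_{H}^{H}\mathcal{YD}$ over a Hopf algebra; here $\Phi|_{G_V}$ is non-abelian, so $B(V)$ lives in a genuinely quasi-associative category, and (as you note yourself) no pullback along an abelian cover removes the associator. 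To use that theory you would first have to realize $B(V)$ in the Yetter--Drinfeld category of an honest (necessarily non-abelian) group --- your ``central extension'' alternative --- but you give no construction of such a realization, no computation of the Cartan entries afterwards, and no argument that an affine Cartan matrix at one object of a Weyl groupoid forces an infinite root system: Cartan matrices change under groupoid reflections, so this step needs the actual finiteness theorems for Weyl groupoids, not the Kac--Moody intuition. Both of your routes therefore stop exactly where the theorem becomes hard.

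The paper closes this gap by explicit computation. It first reduces to the single cocycle $\Psi=(-1)^{i_1j_2k_3}$ by writing $\Phi=\Gamma\Psi$ with $\Gamma$ abelian (Lemma \ref{l7.8}), pulling back along $\pi\colon\widehat G\to G$ and twisting away $\pi^*\Gamma$, which is a coboundary (Proposition \ref{p2.6} and Lemma \ref{l2.6}). Then, in Proposition \ref{p7.7}, for the surviving case $\beta_2\beta_3=\pm1$, $\beta_1\gamma_2=\pm1$, $\gamma_1\gamma_3=\pm1$, it introduces the elements $E=\ad_{X_1}(\ad_{Y_1}(Z_1))$, $F=\ad_{X_2}(\ad_{Y_2}(Z_2))$, $M=\ad_{Y_1}(\ad_{X_1}(Z_2))$, $N=\ad_{Y_2}(\ad_{X_2}(Z_1))$, proves their linear independence by comparing $\Z^3$-homogeneous components of coproducts, shows that $\ad_E(M)$ is nonzero and not contained in the span of $M^2,MN,NM,N^2$, and finally proves $(\ad_E(M))^n\neq 0$ for all $n$ by induction, which yields infinite dimension. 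An argument of this concrete kind (or a fully justified transport of $B(V)$ into a setting where the semisimple reflection theory genuinely applies, followed by an actual root-system computation) is what your proposal is missing.
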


The proof of the theorem is quite technical and lengthy. To avoid digressing from the present main theme, we postpone the proof to the next section. With the help of this theorem, we can prove the following proposition.

\begin{proposition}\label{p3.10}
Let $V=V_1\oplus V_2\oplus V_3\in {_{\k G}^{\k G} \mathcal{YD}^\Phi}$ be a direct sum of simple Yetter-Drinfeld modules and $G=G_V$. If $\dim (V_1)=\dim (V_2)=\dim (V_3)=2 $ and $V_1, V_2, V_3$ are all simple Yetter-Drinfeld modules of 
type (I), then $B(V)$ is infinite dimensional.
\end{proposition}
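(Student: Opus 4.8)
The plan is to deduce Proposition \ref{p3.10} from Theorem \ref{t3.9} by a \emph{change of based group}: I will replace the possibly entangled support group $G=\langle g_1,g_2,g_3\rangle$ by a group $\mathbb{G}$ that is a genuine direct product of three cyclic groups covering $G$, apply Theorem \ref{t3.9} in that setting, and then transport the conclusion back along the isomorphism $B(V)\cong B(\widetilde{V})$ supplied by Lemma \ref{l2.8}. Write $g_i=\deg V_i$ and $m_i=|g_i|$.

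First I would record that the three degrees are pairwise distinct. By Lemma \ref{l3.5} we have $\dim V_a=\bigl|\widetilde{\Phi}_{g_a}(g_b,g_c)/\widetilde{\Phi}_{g_a}(g_c,g_b)\bigr|$ for $\{a,b,c\}=\{1,2,3\}$; if $g_b=g_c$ this ratio collapses to $\widetilde{\Phi}_{g_a}(g_b,g_b)/\widetilde{\Phi}_{g_a}(g_b,g_b)=1$, forcing $\dim V_a=1$ and contradicting $\dim V_a=2$. Hence $g_1,g_2,g_3$ are distinct, which is exactly what allows the section chosen below to be well defined.

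Next I would build the covering. Let $\mathbb{G}=\langle h_1\rangle\times\langle h_2\rangle\times\langle h_3\rangle$ with $|h_i|=m_i$, so that $\mathbb{G}$ is a direct product of three cyclic groups. Because $|g_i|=m_i$, the assignment $h_i\mapsto g_i$ extends to a well-defined group epimorphism $\pi\colon\mathbb{G}\to G$ (surjective since the $g_i$ generate $G$), and since the $g_i$ are distinct I may choose a set-theoretic section $\iota$ of $\pi$ with $\iota(g_i)=h_i$. Lemma \ref{l2.8} then produces $\widetilde{V}\in{_{\k\mathbb{G}}^{\k\mathbb{G}}\mathcal{YD}^{\pi^*\Phi}}$ with $B(V)\cong B(\widetilde{V})$; as the construction respects the grading and acts through $\pi$, the decomposition transports to $\widetilde{V}=\widetilde{V}_1\oplus\widetilde{V}_2\oplus\widetilde{V}_3$ with $\deg\widetilde{V}_i=\iota(g_i)=h_i$. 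I then check that the summands keep the hypotheses of Theorem \ref{t3.9}: each $\widetilde{V}_i=V_i$ as a vector space (so $\dim\widetilde{V}_i=2$), and its $\mathbb{G}$-action is $h\triangleright v=\pi(h)\triangleright v$, so a $\mathbb{G}$-stable graded subspace of $\widetilde{V}_i$ is precisely a $G$-stable subspace of the simple module $V_i$, whence $\widetilde{V}_i$ is simple; moreover $h_i\triangleright v=\pi(h_i)\triangleright v=g_i\triangleright v=-v$, so $\widetilde{V}_i$ is again of type (I). (Alternatively, from $\pi^*\Phi(h_a,h_b,h_c)=\Phi(g_a,g_b,g_c)$ one computes $\widetilde{\pi^*\Phi}_{h_1}(h_2,h_3)=\widetilde{\Phi}_{g_1}(g_2,g_3)$, so the dimension formula of Lemma \ref{l3.5} returns $2$.) Now $\widetilde{V}$ has support group $\langle h_1,h_2,h_3\rangle=\mathbb{G}=\langle h_1\rangle\times\langle h_2\rangle\times\langle h_3\rangle$, so Theorem \ref{t3.9} applies verbatim and shows $B(\widetilde{V})$ is infinite dimensional; therefore so is $B(V)$.

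The content of Proposition \ref{p3.10} beyond Theorem \ref{t3.9} is thus exactly this reduction, and it is essentially bookkeeping: the only points needing a moment's care are the distinctness of the $g_i$ (required to define the section $\iota$) and the verification that simplicity and type (I) survive the passage to $\mathbb{G}$. There is no genuine obstacle at this stage; the real difficulty is concentrated in Theorem \ref{t3.9}, whose proof is deferred to the next section.
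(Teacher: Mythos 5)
Your proposal is correct and follows essentially the same route as the paper's own proof: both construct the covering group $\mathbb{G}=\langle h_1\rangle\times\langle h_2\rangle\times\langle h_3\rangle$ with $|h_i|=m_i$, transport $V$ to $\widetilde{V}$ via Lemma \ref{l2.8}, and then invoke Theorem \ref{t3.9}. The extra verifications you supply (pairwise distinctness of the $g_i$, needed for the section $\iota$ to be well defined, and the preservation of simplicity and type (I) under the change of based group) are correct and merely make explicit details the paper leaves implicit.
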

\begin{proof}
Let $g_i=\deg(V_i)$ and $m_i=|g_i|$ for $i=1,2,3$. Let $\mathbbm{G}=\langle \mathbbm{g}_1\rangle \times \langle \mathbbm{g}_2\rangle \times \langle \mathbbm{g}_3\rangle$ be the abelian group with free generators  $\mathbbm{g}_1, \mathbbm{g}_2, \mathbbm{g}_3$ such that $|\mathbbm{g}_i|=m_i$ for $1\leq i\leq 3$. Then it is obvious that there is a group epimorphism $\pi\colon\mathbbm{G}\To G$ such that $\pi(\mathbbm{g}_i)=g_i$, $1\leq i\leq 3$. Let $\iota\colon G\To \mathbbm{G}$ be a section of $\pi$ ( that is $\pi\circ \iota=\id_G$) such that $\iota(\mathbbm{g}_i)=g_i$ for all $1\leq i\leq 3$. For each $i\in \{1,2,3\}$, let $\widetilde{V_i}\in {_{\k \mathbb{G}}^{\k \mathbb{G}}\mathcal{YD}^{\pi^*{\Phi}}}$ be the Yetter-Drinfeld module associated to $V_i$ define by \eqref{eq3.1}-\eqref{eq3.2}. Let $\widetilde{V}=\widetilde{V_1}\oplus \widetilde{V_2}\oplus \widetilde{V_3}$. By Lemma \ref{l2.8}, we have $B(V)\cong B(\widetilde{V})$. On the other hand, by Theorem \ref{t3.9}, $B(\widetilde{V})$ is infinite dimensional, thus $B(V)$ is also infinite dimensional.
\end{proof}

 Combining Proposition \ref{p3.7}, \ref{p3.8} and \ref{p3.10}, the following theorem is clear.
\begin{theorem}\label{t3.11}
Let $B(V)\in  {_{\k G}^{\k G} \mathcal{YD}^\Phi}$ be a nondiagonal Nichols algebra of rank $3$ with $G_V=G$. Then $B(V)$ is infinite dimensional.
\end{theorem}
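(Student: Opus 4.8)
The plan is to deduce this theorem by assembling the three propositions already in place and verifying that the case division they cover is exhaustive. First I would write $V = V_1 \oplus V_2 \oplus V_3$ as a direct sum of simple twisted Yetter-Drinfeld modules, and set $g_i = \deg V_i$ and $d_i = \dim V_i$. Since $G = G_V = \langle g_1, g_2, g_3 \rangle$, Proposition \ref{p3.6} forces $d_1 = d_2 = d_3 =: d$. Because $B(V)$ is of nondiagonal type, some $V_i$ is not one-dimensional, so $d \geq 2$; in particular every simple factor $V_i$, being simple of dimension $\geq 2$, is of nondiagonal type (a diagonal simple module would be one-dimensional).

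Next I would split into two regimes according to $d$. If $d \geq 3$, then Proposition \ref{p3.7} applies directly and shows $B(V)$ is infinite dimensional. If $d = 2$, I would first invoke Proposition \ref{p5.3}: a simple Yetter-Drinfeld module of nondiagonal type has a finite-dimensional Nichols algebra only if it is of type (I) or type (II). Hence, if any $V_i$ is neither of type (I) nor of type (II), then $B(V_i)$ is already infinite dimensional, and since $B(V_i)$ is a subalgebra of $B(V)$, we are done. It therefore remains to treat the situation in which every $V_i$ is of type (I) or (II).

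In that remaining situation I would distinguish according to whether a type (II) factor occurs. If at least one $V_i$ is of type (II), Proposition \ref{p3.8} yields infinite-dimensionality (its proof already absorbs the relabelling so that a type (II) factor plays the role of $V_1$). If, on the other hand, all three $V_i$ are of type (I), then Proposition \ref{p3.10} applies. These two alternatives exhaust the case $d = 2$ once the non-(I)-non-(II) factors have been dispatched, so in every case $B(V)$ is infinite dimensional.

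The only delicate point at this level is the completeness of the case analysis, which rests on Proposition \ref{p3.6} to collapse the three dimensions to a single value $d$ and on Proposition \ref{p5.3} to pin down the admissible types once $d = 2$. The genuine difficulty has been quarantined into the all-type-(I), dimension-two case, handled by Theorem \ref{t3.9} and thereby by Proposition \ref{p3.10}; at the level of the present theorem there is no further computation, only the bookkeeping of combining Propositions \ref{p3.7}, \ref{p3.8}, and \ref{p3.10}.
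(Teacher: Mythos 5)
Your proof is correct and takes essentially the same route as the paper, whose entire argument for this theorem is the one-line observation that it follows by combining Propositions \ref{p3.7}, \ref{p3.8} and \ref{p3.10} once Proposition \ref{p3.6} forces the three simple factors to have a common dimension $d\geq 2$. Your explicit appeal to Proposition \ref{p5.3} to dispatch the $d=2$ case in which some factor is neither of type (I) nor of type (II) fills in a step the paper leaves implicit (it is handled the same way inside the proofs of Propositions \ref{p3.7} and \ref{p3.8}), so the case analysis is complete.
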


\begin{corollary}\label{c3.12}
Let $B(V)\in  {_{\k G}^{\k G} \mathcal{YD}^\Phi}$ be a Nichols algebra of rank $3$, and $\Phi_{G_V}$ a nonabelian $3$-cocycle on $G_V$. Then $B(V)$ is infinite dimensional.
\end{corollary}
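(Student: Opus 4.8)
The plan is to reduce to the situation governed by Theorem \ref{t3.11} and then invoke it. The first point is to reinterpret the hypothesis. By Lemma \ref{l2.9}, the nonabelianness of $\Phi_{G_V}$ is equivalent to saying that $V$ has no standard basis, i.e. the action of $G_V$ on $V$ is not simultaneously diagonalizable. Since each simple constituent $V_i$ is homogeneous of degree $g_i$ and $G_V$ acts on it as a projective representation with cocycle $\widetilde{\Phi}_{g_i}$ restricted to $G_V$, the action on $V$ is block diagonal along $V=V_1\oplus V_2\oplus V_3$; it is diagonalizable precisely when every $\widetilde{\Phi}_{g_i}|_{G_V}$ is symmetric (recall that a simple module of degree $g$ is $1$-dimensional if and only if $\widetilde{\Phi}_g$ is symmetric). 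Hence nonabelianness forces at least one of them, say $\widetilde{\Phi}_{g_1}|_{G_V}$, to be non-symmetric, so that the simple $G_V$-constituents of $V_1$ have dimension greater than $1$.

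Next I would reduce the based group from $G$ to the support group $G_V$. Because the braiding $R(X\otimes Y)=\deg(X)\triangleright Y\otimes X$ only uses the action of the degrees, which lie in $G_V$, the braided Hopf algebra $B(V)$ depends only on the $G_V$-module and $G_V$-comodule structure of $V$. Concretely, applying Lemma \ref{l2.7} with the identity epimorphism $G_V\to G_V$, with $\Psi=\Phi|_{G_V}$, and with $U$ taken to be $V$ equipped with the restricted $G_V$-action and the same coaction, produces a module $U\in {_{\k G_V}^{\k G_V}\mathcal{YD}^{\Phi|_{G_V}}}$ satisfying $B(V)\cong B(U)$ and having support group $G_V$.

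Finally, I would extract a rank-$3$ piece over $G_V$ and apply Theorem \ref{t3.11}. Decompose each $U_i=V_i$ into simple $G_V$-submodules and pick one constituent $W_i\subseteq U_i$ for each $i$, choosing $W_1$ inside the non-symmetric component so that $\dim W_1>1$. Then $W=W_1\oplus W_2\oplus W_3$ is a nondiagonal rank-$3$ Yetter-Drinfeld submodule of $U$ whose support group $\langle g_1,g_2,g_3\rangle=G_V$ coincides with its based group, so Theorem \ref{t3.11} gives that $B(W)$ is infinite dimensional. Since the inclusion $W\hookrightarrow U$ realizes $B(W)$ as a subalgebra of $B(U)\cong B(V)$, the algebra $B(V)$ is infinite dimensional as well.

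The main obstacle I anticipate is precisely the change of based group: when $G$ is strictly larger than $G_V$, a simple $G$-module $V_i$ need not remain simple once the action is restricted to $G_V$, so the rank can jump above $3$ and Theorem \ref{t3.11} cannot be applied to $U$ directly. Passing to the carefully chosen rank-$3$ sub-object $W$ circumvents this, but it relies on the non-symmetry observation of the first step to guarantee that at least one $W_i$ is genuinely nondiagonal; making this selection precise, and checking that $W$ is indeed a Yetter-Drinfeld submodule with support group exactly $G_V$, is where the care is needed.
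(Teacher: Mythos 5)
Your proposal is correct and follows essentially the same route as the paper: realize $B(V)$ over the support group $H=G_V$ via Lemma \ref{l2.7}, pick one nonzero simple summand inside each $V_i$ to get a rank-$3$ nondiagonal module whose support group equals its based group, apply Theorem \ref{t3.11}, and conclude since this gives a Nichols subalgebra of $B(V)$. The only (harmless) difference is that you hand-pick $W_1$ with $\dim W_1>1$ via the non-symmetric cocycle observation, whereas the paper simply notes that Lemma \ref{l2.9} makes \emph{any} choice of simple constituents nondiagonal, because the restricted cocycle on the common support group is nonabelian.
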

\begin{proof}
Let $H=G_V$ and $\Psi=\Phi_H$. Then the Nichols algebra $B(V)$ can be realized in  ${_{\k H}^{\k H} \mathcal{YD}^\Psi}$, and the rank of $B(V)$ must be greater than or equal to $3$. 
For  each $i\in \{1,2,3\}$, let $U_i$ be a nonzero simple Yetter-Drinfeld submodule of $V_i$ and $U=U_1\oplus U_2\oplus U_3$. It is clear $H=G_U$. By Lemma \ref{l2.9}, $U$ is nondiagonal  since $\Psi$ is nonabelian on $H=G_U$, thus $U_1,U_2,U_3$ are all nondiagonal in ${_{\k H}^{\k H} \mathcal{YD}^\Psi}$. Now $B(U)$ is infinite dimensional by Theorem \ref{t3.11},  and so is $B(V)$ since $B(U)\subset B(V)$.
\end{proof}

\subsection{The general case}
In this subsection, we will give a classification of finite-dimensional Nichols algebras in $^{\k G}_{\k G} \mathcal{YD}^{\Phi}$, where $G$ is a finite abelian group and $\Phi$ is a $3$-cocycle on $G$. Firstly, we have the following theorem.
\begin{theorem}\label{t3.13}
Let $B(V)\in  {_{\k G}^{\k G} \mathcal{YD}^\Phi}$ such that $\Phi_{G_V}$ is nonabelian. Then $B(V)$ is infinite dimensional.
\end{theorem}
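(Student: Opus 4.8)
The plan is to reduce the statement to the rank-$3$ case that was already settled in Corollary \ref{c3.12}, by exhibiting a rank-$3$ sub-Yetter-Drinfeld module of $V$ whose support group still carries a nonabelian cocycle. First I would write $V=\bigoplus_{i=1}^{n}V_i$ as a direct sum of simple twisted Yetter-Drinfeld modules (each graded piece is a projective representation of a finite abelian group over $\k$, hence completely reducible), and set $g_i=\deg V_i$, so that $G_V=\langle g_1,\dots,g_n\rangle$. Note in passing that $G_V$ cannot be generated by two elements: otherwise it would be cyclic or a product of two cyclic groups and $\Phi_{G_V}$ would be abelian by Lemma \ref{l2.5}(1). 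The real task, however, is to pin down \emph{three} of the degrees $g_i$ that already witness the nonabelian behaviour.

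To that end I would study the function
\[
T(x,y,z):=\frac{\widetilde{\Phi}_x(y,z)}{\widetilde{\Phi}_x(z,y)},\qquad x,y,z\in G_V.
\]
By Lemma \ref{l6.3} the value $T(x,y,z)$ is invariant under cyclic permutation of its arguments, and it is manifestly inverted when the last two are swapped, so $T$ is totally antisymmetric. For fixed $x$ the pairing $(y,z)\mapsto T(x,y,z)$ is the antisymmetrization of the $2$-cocycle $\widetilde{\Phi}_x$ defined in \eqref{e2.5}, hence a bicharacter; combined with the cyclic symmetry this shows $T$ is multiplicative in each of its three arguments. Finally Lemma \ref{l3.3} gives $\widetilde{\Phi}_x(x,z)=\widetilde{\Phi}_x(z,x)$, i.e. $T(x,x,z)=1$, so $T$ is genuinely alternating. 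Thus $T$ is an alternating trihomomorphism $G_V\times G_V\times G_V\to\k^{*}$.

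The crucial observation is that $\Phi_{G_V}$ is nonabelian \emph{precisely} when $T$ is nontrivial: indeed, by Lemma \ref{l2.9} (or directly), $\Phi_{G_V}$ is abelian iff every simple object of $_{\k G_V}^{\k G_V}\mathcal{YD}^{\Phi_{G_V}}$ is $1$-dimensional, iff every $\widetilde{\Phi}_x$ with $x\in G_V$ is symmetric, iff $T\equiv 1$. Since $T$ is multiplicative in each variable and $G_V=\langle g_1,\dots,g_n\rangle$, nontriviality of $T$ forces $T(g_i,g_j,g_l)\neq 1$ for some triple of generators, and because $T$ is alternating the indices $i,j,l$ must be pairwise distinct. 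I would then set $U=V_i\oplus V_j\oplus V_l$, a rank-$3$ twisted Yetter-Drinfeld submodule with support group $G_U=\langle g_i,g_j,g_l\rangle$; since $T$ is already nontrivial on $G_U$, the restriction $\Phi_{G_U}$ is a nonabelian $3$-cocycle. Corollary \ref{c3.12} then gives that $B(U)$ is infinite dimensional, and as $U$ is a direct summand of $V$, the algebra $B(U)$ is the subalgebra of $B(V)$ generated by $U$ (exactly as invoked in Corollary \ref{c3.12}); hence $B(V)$ is infinite dimensional as well.

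The main obstacle is the localization step: guaranteeing that the nonabelian behaviour of $\Phi_{G_V}$ is detected on three of the \emph{specified} generating degrees $g_i=\deg V_i$, rather than on some arbitrary triple of elements of $G_V$ that need not correspond to a rank-$3$ summand. This is exactly where the alternating multilinear structure of $T$ is indispensable — it is assembled from the cyclic symmetry of Lemma \ref{l6.3}, the bicharacter property of the antisymmetrized $2$-cocycle $\widetilde{\Phi}_x$, and the normalization $T(x,x,z)=1$ supplied by Lemma \ref{l3.3}. Establishing these three properties cleanly is the heart of the argument; once $T$ is known to be an alternating trihomomorphism, the passage to a nonabelian rank-$3$ summand and the appeal to Corollary \ref{c3.12} are routine.
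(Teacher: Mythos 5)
Your proposal is correct, and its skeleton is the same as the paper's: funnel everything into the rank-$3$ statement, Corollary \ref{c3.12}, and conclude via $B(U)\subset B(V)$. The difference is in how the reduction to rank $3$ is justified. The paper's printed proof is three sentences: realize $B(V)$ in ${_{\k G_V}^{\k G_V}\mathcal{YD}^{\Phi|_{G_V}}}$ by Lemma \ref{l2.7}, note that nonabelianness of $\Phi|_{G_V}$ forces the rank to be at least $3$ (via Lemma \ref{l2.5}), and then cite Corollary \ref{c3.12}. But that corollary is stated for rank exactly $3$, and neither its statement nor its proof (which picks one simple piece from each of the \emph{three} summands so that their degrees generate all of $G_V$) applies verbatim when the rank exceeds $3$: one must still exhibit three simple summands $V_i,V_j,V_l$ whose support group $\langle g_i,g_j,g_l\rangle$ carries a nonabelian restriction of $\Phi$, and a priori the nonabelianness of $\Phi_{G_V}$ might not be visible on any such triple. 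Your alternating trihomomorphism $T(x,y,z)=\widetilde{\Phi}_x(y,z)/\widetilde{\Phi}_x(z,y)$ is precisely the device that settles this: multilinearity (from the bicharacter property of the antisymmetrized $2$-cocycle $\widetilde{\Phi}_x$ together with the cyclic symmetry of Lemma \ref{l6.3}) and the alternating property (Lemma \ref{l3.3}) guarantee that nontriviality of $T$ on $G_V=\langle g_1,\dots,g_n\rangle$ is already detected on a triple of pairwise distinct generators, and the equivalence ``$\Phi_{G_U}$ nonabelian $\iff T|_{G_U}\not\equiv 1$'' converts this into the hypothesis of Corollary \ref{c3.12}. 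So your argument is a completion of the paper's rather than a departure from it: it supplies the localization step that the paper leaves implicit (and which is genuinely needed once the rank over $G_V$ exceeds $3$), at the cost of establishing the trihomomorphism structure of $T$; the remaining ingredients you use (semisimplicity of twisted Yetter--Drinfeld modules, and $B(U)$ being the subalgebra of $B(V)$ generated by a direct summand $U$) are exactly the facts the paper also invokes.
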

\begin{proof}
By Lemma \ref{l2.7}, $B(V)$ can be realized in  $ {_{\k G_V}^{\k G_V} \mathcal{YD}^{\Phi|_{G_V}}}$. Since each $3$-cocycle of cyclic group or direct sum of two cyclic groups is abelian, so the rank of $B(V)$ must be greater than or equal to $3$ because ${\Phi|_{G_V}}$ is nonabelian. Thus $B(V)$ is infinite dimensional by Corollary \ref{c3.12}.
\end{proof}
\begin{remark}\label{r3.14} 
\emph{Since the conditions ``$\Phi|_{G_V}$ is abelian" and ``$B(V)$ has a standard basis" are equivalent by Lemma \ref{l2.9}, Theorem \ref{t3.13} is equivalent to Theorem \ref{t3.1}.}
\end{remark}
We draw the following immediate consequences.
\begin{corollary}\label{c3.14}
Suppose $B(V)\in  {_{\k G}^{\k G} \mathcal{YD}^\Phi}$ is a finite-dimensional Nichols algebra and $G=G_V$. Then $B(V)$ must be of diagonal type.
\end{corollary}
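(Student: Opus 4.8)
The plan is to obtain this as a direct consequence of Theorem \ref{t3.13}, arguing by contraposition on the cohomological type of the restricted cocycle. Since the hypothesis $G=G_V$ means $\Phi_{G_V}=\Phi$, the dichotomy ``$\Phi_{G_V}$ abelian / nonabelian'' applies to $\Phi$ itself, and Theorem \ref{t3.13} already disposes of the nonabelian case entirely.

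Concretely, I would first suppose that $B(V)$ is finite dimensional. Were $\Phi_{G_V}=\Phi$ a nonabelian $3$-cocycle, Theorem \ref{t3.13} would immediately force $B(V)$ to be infinite dimensional, contradicting the assumption; hence $\Phi$ must be an abelian $3$-cocycle on $G$. I would then invoke the defining property of an abelian $3$-cocycle recalled in the Reduction subsection: $\Phi$ is abelian exactly when the category ${_{\k G}^{\k G}\mathcal{YD}^\Phi}$ is pointed, that is, each of its simple objects is $1$-dimensional. Because every homogeneous component $V_g$ is a projective $G$-representation over the algebraically closed field $\k$ of characteristic zero, $V$ is semisimple and decomposes into a direct sum of simple twisted Yetter--Drinfeld modules; by pointedness each summand is $1$-dimensional, so $V$ is of diagonal type, and therefore $B(V)$ is a Nichols algebra of diagonal type, as claimed.

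I do not expect any real obstacle here, since all the substance has been packed into Theorem \ref{t3.13} and the statement is essentially a repackaging. The only step meriting a word of care is the last implication: ``$\Phi$ abelian'' yields not merely a diagonal action of $G_V$ (equivalently, the existence of a standard basis, via Lemma \ref{l2.9}) but an honest decomposition of $V$ into one-dimensional objects. Under the running hypothesis $G=G_V$ the two coincide, since a standard basis then consists of simultaneous eigenvectors spanning $G$-stable lines; alternatively, and more conceptually, the conclusion follows at once from the pointedness of ${_{\k G}^{\k G}\mathcal{YD}^\Phi}$ that is built into the very definition of an abelian $3$-cocycle.
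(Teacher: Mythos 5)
Your proof is correct and follows exactly the route the paper intends: the paper states this corollary without proof as an immediate consequence of Theorem \ref{t3.13}, i.e., finite dimensionality forces $\Phi=\Phi_{G_V}$ to be abelian by contraposition, whence pointedness of ${_{\k G}^{\k G}\mathcal{YD}^\Phi}$ together with semisimplicity (projective representations of a finite group in characteristic zero) gives the decomposition of $V$ into $1$-dimensional objects. Your closing remark distinguishing ``has a standard basis'' from ``is of diagonal type'' --- the two coinciding precisely because $G=G_V$ --- is a worthwhile detail that the paper leaves implicit.
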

\begin{corollary}\label{c3.15}
Suppose $B(V)\in  {_{\k G}^{\k G} \mathcal{YD}^\Phi}$ is a finite-dimensional Nichols algebra. Then we have 
\begin{itemize}
\item[(1).] $\Phi|_{G_V}$ is abelian and $V$ has a standard basis;
\item[(2).] $B(V)$ is isomorphic to a Nichols algebra of diagonal type in ${_{\k G_V}^{\k G_V} \mathcal{YD}^{\Phi_{G_V}}}$.
\end{itemize}
\end{corollary}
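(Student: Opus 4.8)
The plan is to read off both assertions as formal consequences of the machinery already assembled, so that essentially no fresh computation is needed; the real work sits in Theorem~\ref{t3.13} and Corollary~\ref{c3.14}, which have already been established. I would treat the two parts separately but in the same spirit, first pinning down the cocycle condition and then descending to the support group.

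For part~(1) I would argue by contraposition against Theorem~\ref{t3.13}. If $\Phi|_{G_V}$ were nonabelian, that theorem would force $B(V)$ to be infinite dimensional, contradicting the standing hypothesis; hence $\Phi|_{G_V}$ is abelian. The equivalence of conditions $(1)$ and $(3)$ in Lemma~\ref{l2.9} then yields at once that $V$ admits a standard basis, which completes part~(1).

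For part~(2) the first move is to descend from the based group $G$ to the support group $G_V$. Applying Lemma~\ref{l2.7} with $H=G_V$, $F=\id$, and $\pi=\id_{G_V}$, I would realize $B(V)$ as a Nichols algebra $B(U)$ inside ${_{\k G_V}^{\k G_V}\mathcal{YD}^{\Phi_{G_V}}}$, where now the support group of $U$ coincides with the whole based group $G_V$. Since $B(U)\cong B(V)$ remains finite dimensional, Corollary~\ref{c3.14}---whose hypothesis is precisely that the based group equal the support group---applies and shows that $B(U)$ is of diagonal type. As $B(V)\cong B(U)$, this is exactly the claimed conclusion.

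The only place demanding attention is the bookkeeping in this reduction step: one must confirm that passing to $G_V$ via the identity data genuinely satisfies the three compatibility conditions of Lemma~\ref{l2.7}, and that the support group is left intact, so that Corollary~\ref{c3.14} is invoked in its stated form with based group equal to support group. These verifications are routine once the identity maps are substituted, so I do not expect a substantive obstacle---the corollary is designed to be a clean repackaging of Theorem~\ref{t3.13} together with Corollary~\ref{c3.14}.
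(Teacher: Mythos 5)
Your proof is correct and takes essentially the same approach as the paper's (very terse) proof: part (1) is exactly the paper's appeal to Theorem \ref{t3.13} (with Lemma \ref{l2.9} giving the standard basis), and for part (2) your descent to the support group via Lemma \ref{l2.7} with identity data, followed by Corollary \ref{c3.14}, supplies the intended argument --- the paper's citation of Proposition \ref{p3.10} there appears to be a mis-reference, since that proposition concerns rank-$3$ type (I) modules. The routine verifications you flag (the three compatibility conditions for the identity maps, and that the support group is preserved) do hold exactly as in the paper's own use of Lemma \ref{l2.7} in the proof of Theorem \ref{t3.13}.
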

\begin{proof}
(1) follows from Theorem \ref{t3.13}, and (2) follows from Proposition \ref{p3.10}.
\end{proof}

\subsection{Classification}
 Next we will present a classification of finite-dimensional Nichols algebras in  ${_{\k G}^{\k G} \mathcal{YD}^\Phi}$. 
%Recall that a map $\phi\colon G\to \k^*$ is called a {\bf quasi-character} associated to a $2$-cocycle $\omega$ on $G$ if 
%\begin{equation}
%\phi(e)\phi(f)=\omega(e,f)\phi(ef), \ \forall e,f\in G.
%\end{equation}
%\begin{lemma}
%The map $\phi_i$ determined by \eqref{eq8.2} is a quasi-character of $G_V$ associated to $\widetilde{\Phi}_{g_i}$ for each $1\leq i\leq n$.  
%\end{lemma}
%\begin{proof}
%For each $1\leq i \leq n$ and $e,f\in G_V$ we have 
%\begin{eqnarray*}
%\phi_i(e) \phi_i(f) Y_i=e\triangleright (f\triangleright Y_i)=\widetilde{\Phi}_{g_i}(e,f) ef \triangleright Y_i=\widetilde{\Phi}_{g_i}(e,f) \phi_i(ef) Y_i.
%\end{eqnarray*}
%Thus we obtain $\phi_i(e) \phi_i(f) =\widetilde{\Phi}_{g_i}(e,f) \phi_i(ef) .$
%\end{proof}
%We need notion of arithmetical root systems. The reader is refereed to \cite{H4} for the detailed definition of arithmetical root systems.
%
 Let $\Delta_{\chi,E}$ be an arithmetic root system, where $E=\{e_1,\cdots, e_n\}$ is a set of free generator of $\Z^n$ and $\chi$ is a bicharacter of $\Z^n$. For each positive root $\alpha\in \Delta_{\chi,E}$, define
$q_\alpha=\chi(\alpha,\alpha)$.
Then the height of $\alpha$ is defined by
\begin{equation}
\operatorname{ht}(\alpha)=\left\{
              \begin{array}{ll}
                |q_\alpha|, & \hbox{if $q_\alpha\neq 1$ is a root of unity;} \\
                \infty, & \hbox{otherwise.}
              \end{array}
            \right.
\end{equation}

Let $V\in  {_{\k G}^{\k G} \mathcal{YD}^\Phi}$ be a Yetter-Drinfeld module with a standard basis  $\{Y_1,\cdots, Y_n\}$, $\deg(Y_i)=g_i, 1\leq i\leq n$. Then there is a pair $(\chi, E)$ associated to $V$, where $E=\{e_1,\cdots, e_n\}$ is a set of free generator of $\Z^n$ and $\chi$ is a bicharacter of $\Z^n$ given by 
\begin{equation}
g_i \triangleright Y_j = \chi(e_i,e_j)Y_j, \ 1\leq i,j\leq n.
\end{equation}
\begin{remark}
\emph{If $V$ has another standard basis $\{Y'_1,\cdots, Y'_n\}$, and a pair $(\chi', E')$ associated to $V$ with respect to the basis $\{Y'_1,\cdots, Y'_n\}$. Then there is an isomorphism $\tau\colon\Z^n\to \Z^n$ such that
\begin{equation*}
\tau(E)=E',\ \ \ \ \chi'(\tau(e_i),\tau(e_j))=\chi(e_i,e_j)
\end{equation*} for all $1\leq i, j\leq n$.}
\end{remark}

\begin{definition}
A Yetter-Drinfeld module $V\in  {_{\k G}^{\k G} \mathcal{YD}^\Phi}$ is said to be of {\bf finite type} if 
\begin{itemize}
\item[(1)] $V$ has a standard basis;
\item[(2)] $\Delta_{\chi,E}$ is an arithmetic root system and $\operatorname{ht}(\alpha)< \infty $ for all $\alpha\in \Delta_{\chi,E}$, where $(\chi, E)$ is a pair associated to $V$.
\end{itemize}
\end{definition}

\begin{theorem}\label{t8.8}
Let $V\in  {_{\k G}^{\k G} \mathcal{YD}^\Phi}$ be a Yetter-Drinfeld module.  Then $B(V)$ is finite dimensional if and only if $V$ is of finite type.
\end{theorem}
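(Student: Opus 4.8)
The plan is to prove both implications simultaneously by reducing $B(V)$ to an \emph{ordinary} (untwisted) Nichols algebra of diagonal type and then invoking Heckenberger's classification \cite{H4} together with the PBW basis theorem for diagonal Nichols algebras. First I observe that in both directions one may assume that $V$ has a standard basis. For the ``if'' direction this is built into the definition of finite type, while for the ``only if'' direction, if $B(V)$ is finite dimensional then Corollary \ref{c3.15}(1) guarantees that $\Phi|_{G_V}$ is abelian and that $V$ has a standard basis. Thus by Lemma \ref{l2.9} the support group $G_V$ acts diagonally, and the associated pair $(\chi,E)$ defined by $g_i\triangleright Y_j=\chi(e_i,e_j)Y_j$ is available in both situations.

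Next I would invoke Proposition \ref{p2.11}: since $V$ has a standard basis, $B(V)$ is twist equivalent to a Nichols algebra $B(U)$ in the ordinary Yetter-Drinfeld category ${_{\k\widehat{G_V}}^{\k\widehat{G_V}}\mathcal{YD}}$, and $B(V)$ and $B(U)$ carry the same generalized Dynkin diagram. Because the twisting \eqref{eq2.14} only rescales the multiplication and leaves the underlying graded vector space unchanged, we have $\dim B(V)=\dim B(U)$, so it suffices to decide finite dimensionality for the untwisted diagonal algebra $B(U)$. The compatibility that makes this transport work is that the root datum is a diagram invariant: for $\alpha=\sum_i a_i e_i$ one computes $q_\alpha=\chi(\alpha,\alpha)=\prod_i q_{ii}^{a_i^2}\prod_{i<j}(q_{ij}q_{ji})^{a_i a_j}$, which depends only on the vertex labels $q_{ii}$ and the edge labels $\widetilde{q_{ij}}=q_{ij}q_{ji}$. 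Hence $\Delta_{\chi,E}$ and every height $\operatorname{ht}(\alpha)$ agree for $V$ and $U$, so $V$ is of finite type if and only if $U$ is.

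The heart of the argument is then the diagonal case over $\widehat{G_V}$. Here I would use that $B(U)$ admits a restricted PBW basis indexed by the positive roots of $\Delta_{\chi,E}$, so that $\dim B(U)=\prod_{\alpha\in\Delta_{\chi,E}^{+}}\operatorname{ht}(\alpha)$. This product is finite exactly when $\Delta_{\chi,E}$ has finitely many positive roots, i.e. when $\Delta_{\chi,E}$ is an arithmetic root system (this is the content of Heckenberger's list \cite{H4}), \emph{and} each factor $\operatorname{ht}(\alpha)$ is finite, i.e. each $q_\alpha$ is a root of unity different from $1$. These are precisely the two clauses defining finite type, so $B(U)$ is finite dimensional if and only if $U$ is of finite type; transporting back along the twist via the identification $\dim B(V)=\dim B(U)$ then yields the theorem.

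The main obstacle I anticipate is not a single long computation but the need to treat the height condition with care: being an arithmetic (hence finite) root system does \emph{not} by itself force finite dimensionality, since a PBW generator $x_\alpha$ with $q_\alpha$ not a root of unity contributes a free polynomial factor and makes $B(U)$ infinite dimensional. Pinning this down requires the PBW dimension formula above, so the essential external input is that all finite-dimensional Nichols algebras of diagonal type over abelian groups are governed by Heckenberger's classification and that their dimension factors as a product of the heights of the positive roots. The only other point requiring care is verifying that the twist equivalence of Proposition \ref{p2.11} preserves dimension and carries the root datum over unchanged, which is exactly the diagram-invariance of $q_\alpha$ recorded above.
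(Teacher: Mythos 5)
Your proposal is correct and follows essentially the same route as the paper's own proof: reduce via Corollary \ref{c3.15} to the case of a standard basis, transport $B(V)$ by Proposition \ref{p2.11} to an ordinary diagonal Nichols algebra $B(U)$ over $\widehat{G_V}$, and conclude with Heckenberger's classification together with the PBW/height criterion. If anything, you are slightly more careful than the paper on one point: the paper asserts the pair $(\chi,E)$ is literally unchanged under twisting, whereas your observation that only the diagram data $q_{ii}$ and $q_{ij}q_{ji}$ (and hence each $q_\alpha$ and $\Delta_{\chi,E}$) are invariant is the precise statement needed.
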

\begin{proof}
Firstly, suppose that $B(V)$ is finite dimensional. By Corollary \ref{c3.15}, $\Phi|_{G_V}$ is an abelian $3$-cocycle on $G_V$, thus $V$ has a standard basis. Let $H=G_V$, $\Psi=\Phi|_{G_V}$, and $(\chi, E)$ a pair associated to $V$. By Proposition \ref{p2.11},  $B(V)$ is isomorphic to a Nichols algebra $B(U)$ in $_{\k \widehat{H}}^{\k \widehat{H}} \mathcal{YD}$.  Note that $U$ can be obtained from $V$ by change of based groups and twisting, which do not change the pair $(\chi, E)$ associted to $V$, thus $(\chi, E)$ is also a pair associated to $U$. On the other hand, since $B(U)\in {_{\k H}^{\k H} \mathcal{YD}}$ is finite dimensional,  $\Delta_{\chi,E}$ is an arithmetic root system. Moreover, $B(U)$ is finite dimensional implies that the nilpotent index of a root vector of each root $\alpha\in \Delta_{\chi,E}$ is finite, i.e., $\operatorname{ht}(\alpha)< \infty $ for all $\alpha\in \Delta_{\chi,E}$.

Conversely, Suppose that $V$ is finite type. By Proposition \ref{p2.11}, $B(V)$ is twist equivalent to a Nichols algebra $B(U)$ in ${_{\k \widehat{H}}^{\k \widehat{H}} \mathcal{YD}}$, and $B(V)$ have the same generalized Dynkin diagram with $B(V)$. So $\Delta_{\chi,E}$ is the arithmetic root system of $B(U)$. Furthermore, $B(U)$ is finite dimensional since $\operatorname{ht}(\alpha)< \infty $ for all $\alpha\in \Delta_{\chi,E}$. This implies $B(V)$ is infinite dimensional since $B(V)$ is twist equivalent to $B(U)$.
\end{proof}

\section{The proof of Theorem \ref{t3.9}}
The main task of this section is to prove Theorem \ref{t3.9}. Firstly, we will consider a special case.

\subsection{A special case}
In this subsection, we will prove the following proposition.
\begin{proposition}\label{p7.1}
Suppose $G=\langle e\rangle \times \langle f\rangle \times \langle g\rangle$ is a finite abelian group, $\Phi$ is a $3$-cocycle on $G$ given by 
\begin{equation}
\Phi(e^{i_{1}}f^{i_{2}}g^{i_{3}},e^{j_{1}}f^{j_{2}}g^{j_{3}},e^{k_{1}}f^{k_{2}}g^{k_{3}})=(-1)^{i_1j_2k_3}
\end{equation}
for all $0\leq i_1,j_1, k_1<m_1,\ 0\leq i_2, j_2, k_2<m_1,\ 0\leq i_3, j_3, k_3<m_3$, where $m_1=|e|, m_2=|f|, m_3=|g|$. 
Let $U,V,W\in {_{\k G}^{\k G} \mathcal{YD}^\Phi}$ be simple Yetter-Drinfeld modules of type (I) such that $\deg(U)=e,\ \deg(V)=g,\  \deg(W)=f$ and $\dim(U)=\dim(V)=\dim(W)=2$. Then $B(U\oplus V\oplus W)$ is infinite dimensional.
\end{proposition}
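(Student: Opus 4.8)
The plan is to exploit the strong symmetry of the cocycle $\Phi(e^{i_{1}}f^{i_{2}}g^{i_{3}},\dots)=(-1)^{i_1j_2k_3}$ and to separate a routine rank-$2$ analysis from the genuinely rank-$3$ phenomenon that actually produces the infinite dimension. First I would make the three modules completely explicit via Lemma \ref{l3.5}. A direct computation from \eqref{e2.5} gives
\[
\widetilde{\Phi}_e(x,y)=(-1)^{x_fy_g},\qquad \widetilde{\Phi}_g(x,y)=(-1)^{x_ey_f},\qquad \widetilde{\Phi}_f(x,y)=(-1)^{x_ey_g},
\]
where $x_e,x_f,x_g$ are the exponents of $x$ in the generators $e,f,g$. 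Hence on $U$ (degree $e$) the element $e$ acts by $-1$ while $f,g$ anticommute; on $V$ (degree $g$) the element $g$ acts by $-1$ while $e,f$ anticommute; and on $W$ (degree $f$) the element $f$ acts by $-1$ while $e,g$ anticommute. In particular the self-braiding of each module is $R(a\otimes b)=-\,b\otimes a$, so $B(U),B(V),B(W)$ are exterior algebras, and using Lemma \ref{l3.5} I would fix eigenbases and write down the full braiding of $T:=U\oplus V\oplus W$.

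Next I would treat the rank-$2$ sub-objects. Each of $\langle e,g\rangle,\langle e,f\rangle,\langle f,g\rangle$ is a product of two cyclic groups, so by Lemma \ref{l2.5} the restricted cocycle is abelian and by Lemma \ref{l2.9} each of $U\oplus V,\ U\oplus W,\ V\oplus W$ is of diagonal type. Passing to simultaneous eigenbases and applying Proposition \ref{p2.11}, I would compute the generalized Dynkin diagram of each; it is a four-cycle all of whose vertices are labelled $-1$. Comparing with \cite{H4}, this four-cycle is of finite type only for degenerate values of the (root-of-unity) scalars left free by Lemma \ref{l3.5}; for the remaining values $B(U\oplus V)$ is already infinite-dimensional and we are done. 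The difficulty is concentrated in the degenerate values — in particular the base case $G=\Z_2^3$ — where each such four-cycle splits as two disjoint copies of the finite $A_2$-diagram with all labels $-1$, so that \emph{every} rank-$2$ sub-Nichols-algebra is finite-dimensional.

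The heart of the proof is therefore the essential rank-$3$ case, and the key obstacle is that $T$ admits no three-dimensional \emph{diagonal} braided subspace: a basis vector of $U$ would have to be a simultaneous eigenvector for the actions of $\deg V=g$ and $\deg W=f$, which is impossible since $f,g$ anticommute on $U$. My plan is to turn this transversality into an infinite supply of linearly independent elements. Inside the finite-dimensional subalgebra $B(U\oplus V)\subset B(T)$ I would isolate the two root vectors $R,R'$ of the two $A_2$-components, and compute via \eqref{n3.8} the action of the third generator $f$ on them. The decisive computation is that $f$ does \emph{not} preserve the diagonal structure of $B(U\oplus V)$ but instead permutes $R$ and $R'$ (up to scalars), so that $\{R,R'\}$ spans a new $G$-homogeneous subspace of degree $eg$ which is again of ``type (I)'' (the element $eg$ acting by $-1$) and on which $f$ acts by a cyclic, anticommuting operator. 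This reproduces, one $\N$-degree higher, exactly the transversal rank-$2$ configuration we began with, now based on $\langle eg,f\rangle$. Iterating yields an infinite ascending tower of nonzero homogeneous root vectors — equivalently an infinite set of positive roots for the associated Weyl groupoid — forcing $\dim B(T)=\infty$.

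The step I expect to be the main obstacle is making this escalation rigorous and uniform. One must show that the tower never terminates, i.e.\ that each successive two-dimensional module is again nondiagonal of type (I) and that its root vectors remain nonzero in $B(T)$, and one must check that the argument is independent of the orders $m_1,m_2,m_3$ and of the undetermined scalars $\beta,\gamma,\dots$ of Lemma \ref{l3.5}. Equivalently, the obstruction can be phrased as exhibiting a single higher-degree diagonal braided subspace whose generalized Dynkin diagram is affine or indefinite and then invoking \cite{H4}; pinning down that subspace explicitly, and verifying that it lies outside Heckenberger's finite list for every admissible choice of parameters, is where the technical weight of the proof lies.
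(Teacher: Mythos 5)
Your opening analysis and your treatment of the generic parameters reproduce what the paper does: the explicit module structures come from Lemma \ref{l3.5}, and the rank-$2$ pairs are diagonal over their two-generated supports with four-cycle diagrams (all vertices $-1$) that are infinite type unless the structure constants are degenerate, in which case each diagram splits into two $A_2$'s and every rank-$2$ sub-Nichols algebra is finite-dimensional — this is Proposition \ref{p7.6} and the remark following it. The gap lies in your escalation mechanism for the degenerate case, and it is twofold. First, the pair $\{R,R'\}\oplus W$ does \emph{not} reproduce the transversal configuration: its support group $\langle eg,f\rangle$ is generated by two elements, so by Lemma \ref{l2.5}(1) and Lemma \ref{l2.9} the action of the support group on it is automatically diagonalizable. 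Concretely, in the degenerate case $eg$ acts on $\{R,R'\}$ by the scalar $-1$ and $f$ acts on $W$ by the scalar $-1$, so the two actions relevant to the new based group commute on each summand; the anticommutation you exploit ($e$ or $g$ against $f$ on $\{R,R'\}$) involves group elements that are no longer degrees of modules in the new configuration. Transversality is an irreducibly rank-$3$ phenomenon and cannot recur among pairs, so your tower of "again nondiagonal of type (I)" configurations does not exist; what one actually finds at degree two is a \emph{diagonal} braided subspace (for $G=\Z_2^3$ its diagram is a genuine four-cycle), which is a different — and, as it stands, unusable — object.

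Second, and fatally even for that corrected version: $R$ and $R'$ are not primitive in $B(U\oplus V\oplus W)$. In a Nichols algebra all primitives sit in degree $1$, and indeed the paper's Step 1 computations exhibit coproducts such as $\D(\ad_{Y_1}(Z_1))=1\otimes \ad_{Y_1}(Z_1)+\ad_{Y_1}(Z_1)\otimes 1+2Y_1\otimes Z_1$. Hence the subalgebra of $B(U\oplus V\oplus W)$ generated by $\{R,R',Z_1,Z_2\}$ is \emph{not} the Nichols algebra of that braided vector space, and Heckenberger's classification \cite{H4} says nothing about it: the inference "infinite-type diagram spanned by higher-degree root vectors $\Rightarrow \dim B(U\oplus V\oplus W)=\infty$" would require a root-vector/reflection theory in the spirit of \cite{AHS} (Weyl groupoids for semisimple modules) valid in the twisted category ${_{\k G}^{\k G}\mathcal{YD}^\Phi}$, which is not available and which you do not construct — your own closing paragraph flags this as "technical weight," but it is the missing idea, not a routine verification. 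The paper's Proposition \ref{p7.7} is engineered precisely to avoid this bridge: it works with all three modules simultaneously, builds the explicit degree-$3$ elements $E,F,M,N$, proves their linear independence by $\Z^3$-degree bookkeeping of coproducts, shows $\ad_E(M)\neq 0$ and does not lie in the span of $M^2,MN,NM,N^2$, and then proves $(\ad_E(M))^n\neq 0$ for all $n$ by induction, thereby producing infinitely many nonzero elements of $B(U\oplus V\oplus W)$ directly, with no appeal to \cite{H4} beyond rank $2$.
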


In what follows, the Yetter-Drinfeld category ${_{\k G}^{\k G} \mathcal{YD}^\Phi}$ and modules $U,V,W$ in it are assumed satisfying the conditions of this proposition.

\begin{lemma}\label{l7.2}
The simple module $U$ has a basis $\{X_1,X_2\}$ satisfying 
\begin{eqnarray}
&&e\triangleright X_i=-X_i, \ i=1,2,\label{eq7.2}\\
&&f\triangleright X_1=\beta_1 X_1,  f\triangleright X_2=-\beta_1 X_2,\label{eq7.3}\\
&&g\triangleright X_1=\gamma_1 X_2,  g\triangleright X_2=\gamma_1 X_1.\label{eq7.4}
\end{eqnarray}
Here $\beta_1,\gamma_1\in \k$ such that $\beta_1^{m_2}=1, \ \gamma_1^{m_3}=1$.
\end{lemma}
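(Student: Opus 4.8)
The plan is to read off the statement as the specialization of Lemma~\ref{l3.5} to the explicit associator of Proposition~\ref{p7.1}, with the role assignment $g_1=e$, $g_2=f$, $g_3=g$ (so that $e$ acts as a central scalar, $f$ acts diagonally, and $g$ effects the cyclic shift). The first step is to compute the controlling ratio $\widetilde{\Phi}_e(f,g)/\widetilde{\Phi}_e(g,f)$. Using the three-fold expression from Lemma~\ref{l6.3} and the fact that $\Phi(a,b,c)=(-1)^{i_1j_2k_3}$ is nontrivial only when the first argument carries an $e$-exponent, the second an $f$-exponent, and the third a $g$-exponent, the only surviving factor is $\Phi(e,f,g)=-1$, so that
\[
\frac{\widetilde{\Phi}_e(f,g)}{\widetilde{\Phi}_e(g,f)}=-1 .
\]
This scalar has order $2$, matching the hypothesis $\dim U=2$ and confirming that Lemma~\ref{l3.5} does produce a two-dimensional module here.

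Next I would invoke Lemma~\ref{l3.5} to obtain a basis $\{X_1,X_2\}$ of $U$ on which $e,f,g$ act via \eqref{e6.5}--\eqref{e6.7}. Since $U$ is of type (I) in the sense of Proposition~\ref{p5.3}, the scalar $\alpha$ is forced to equal $-1$, which is exactly \eqref{eq7.2}. Feeding the ratio $-1$ into \eqref{e6.6} gives $f\triangleright X_1=\beta_1X_1$ and $f\triangleright X_2=-\beta_1X_2$, i.e.\ \eqref{eq7.3}, where $\beta_1$ is the scalar $\beta$ of Lemma~\ref{l3.5}.

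The remaining point, and the only place requiring genuine care, is the $g$-action. Lemma~\ref{l3.5} delivers it in the asymmetric normal form $g\triangleright X_1=X_2$, $g\triangleright X_2=\gamma X_1$, whereas \eqref{eq7.4} demands a common coefficient; I would therefore rescale $X_2\mapsto\gamma^{-1/2}X_2$, which replaces both coefficients by $\gamma_1:=\gamma^{1/2}$. It then remains to check the root-of-unity conditions $\beta_1^{m_2}=1$ and $\gamma_1^{m_3}=1$. By the final displayed identities of Lemma~\ref{l3.5} these reduce to $\prod_{i=1}^{m_2-1}\widetilde{\Phi}_e(f,f^i)=1$ and $\gamma^{m_3/2}=\prod_{i=1}^{m_3-1}\widetilde{\Phi}_e(g,g^i)=1$; expanding each $\widetilde{\Phi}_e$ by \eqref{e2.5}, every $\Phi$-factor evaluates to $1$, because in each triple either the first entry is a pure power of $f$ or of $g$ (hence carries no $e$-exponent) or the remaining entries lack the $f$- and $g$-exponents needed to activate $(-1)^{i_1j_2k_3}$. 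Hence both products are trivial, and since $\gamma_1^{m_3}=\gamma^{m_3/2}$ this yields $\beta_1^{m_2}=\gamma_1^{m_3}=1$, completing the argument. The main obstacle is thus purely bookkeeping: normalizing the $g$-action to the symmetric form \eqref{eq7.4} and tracking the cocycle products that pin down the orders of $\beta_1$ and $\gamma_1$.
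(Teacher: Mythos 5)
Your proposal is correct and follows essentially the same route as the paper: invoke Lemma \ref{l3.5} (with type (I) forcing $\alpha=-1$ and $\dim U=2$ giving the ratio $-1$), then symmetrize the $g$-action by rescaling $X_2$ with a square root of the scalar $\gamma$, exactly as the paper does via $X_2=\gamma_1^{-1}X_2'$ with $\gamma_1^2=\gamma_1'$. The only difference is that you spell out the computation showing the cocycle products $\prod_i\widetilde{\Phi}_e(f,f^i)$ and $\prod_i\widetilde{\Phi}_e(g,g^i)$ are trivial for this explicit $\Phi$, which the paper asserts implicitly when stating $\beta_1^{m_2}=1$ and $\gamma_1'^{\,m_3/2}=1$.
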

\begin{proof}
Since $U$ is of type (I) and $\dim(U)=2$, by Lemma \ref{l3.5}, $U$ has a basis $\{X'_1, X'_2\}$ such that 
\begin{eqnarray*}
&&e\triangleright X'_i=-X'_i, \ i=1,2,\\
&&f\triangleright X'_1=\beta_1 X'_1,  f\triangleright X_2=-\beta_1 X_2,\\
&&g\triangleright X'_1=X'_2,  g\triangleright X'_2=\gamma'_1 X'_1.
\end{eqnarray*}
Here $\beta_1^{m_2}=1, {\gamma'_1}^{\frac{m_3}{2}}=1.$
Let $\gamma\in \k$ such that $\gamma_1^2=\gamma'_1$. It is clear that $\gamma_1^{m_3}=1$.  Let $X_1=X'_1,\ X_2=\frac{1}{\gamma_1}X'_2$, we get \eqref{eq7.2}-\eqref{eq7.4}.
\end{proof}

Similar to Lemma \ref{l7.2}, we have the following two lemmas.

\begin{lemma}
The simple module $V$ has a basis $\{Y_1,Y_2\}$ satisfying 
\begin{eqnarray}
&&f\triangleright Y_i=-Y_i, \ i=1,2,\label{eq7.8}\\
&&g\triangleright Y_1=\beta_2 Y_1,  g\triangleright Y_2=-\beta_2 Y_2,\label{eq7.9}\\
&&e\triangleright Y_1=\gamma_2 Y_2,  e\triangleright Y_2=\gamma_2 Y_1.\label{eq7.10}
\end{eqnarray}
Here $\beta_2,\gamma_2\in \k$ are numbers such that $\beta_2^{m_3}=1, \ \gamma_2^{m_1}=1$.
\end{lemma}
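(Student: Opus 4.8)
The plan is to repeat the proof of Lemma \ref{l7.2} with the generators in their shifted roles: here $f$ is the generator acting centrally by $-1$, $g$ is the diagonal generator, and $e$ is the shift generator. Concretely, I would apply Lemma \ref{l3.5} to $V$ with $f$ as the distinguished generator $g_1$. By Lemma \ref{l3.3}, $f$ is a $\widetilde{\Phi}_f$-element, hence acts as a $G$-module endomorphism of the irreducible $V$; Schur's lemma makes the action scalar, and the type (I) hypothesis pins this scalar to $-1$, giving \eqref{eq7.8}.

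Next I would specialize Lemma \ref{l3.5} to the explicit cocycle $\Phi=(-1)^{i_1 j_2 k_3}$. A direct computation from \eqref{e2.5} gives
\begin{equation*}
\widetilde{\Phi}_f(x,y)=(-1)^{x_e\, y_g},
\end{equation*}
where $x_e$ and $y_g$ denote the $e$-exponent of $x$ and the $g$-exponent of $y$. Thus $\widetilde{\Phi}_f(g,e)/\widetilde{\Phi}_f(e,g)=-1$ has order $2$, re-confirming $\dim V=2$ and furnishing, via Lemma \ref{l3.5}, a basis $\{Y_1,Y'_2\}$ with $g\triangleright Y_1=\beta_2 Y_1$ and $g\triangleright Y'_2=-\beta_2 Y'_2$, which is \eqref{eq7.9}, together with the asymmetric shift $e\triangleright Y_1=Y'_2$, $e\triangleright Y'_2=\gamma'_2 Y_1$. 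Feeding $\widetilde{\Phi}_f$ into the product formulas of Lemma \ref{l3.5}, both products $\prod_i\widetilde{\Phi}_f(g,g^i)$ and $\prod_i\widetilde{\Phi}_f(e,e^i)$ are trivial because the relevant exponents vanish; this gives $\beta_2^{m_3}=1$ and $(\gamma'_2)^{m_1/2}=1$.

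Finally, since the $e$-action is not yet symmetric, I would rescale exactly as in Lemma \ref{l7.2}: choose $\gamma_2\in\k$ with $\gamma_2^2=\gamma'_2$ and put $Y_2=\gamma_2^{-1}Y'_2$. In the new basis $e\triangleright Y_1=\gamma_2 Y_2$ and $e\triangleright Y_2=\gamma_2 Y_1$, which is \eqref{eq7.10}, while \eqref{eq7.8} and \eqref{eq7.9} are untouched since $f$ and $g$ act diagonally on any rescaling of $Y'_2$. The order claim follows from $\gamma_2^{m_1}=(\gamma_2^2)^{m_1/2}=(\gamma'_2)^{m_1/2}=1$.

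The only point needing care is the bookkeeping of the three roles: the type (I) hypothesis fixes $f$ as the central $-1$ generator, and the sign pattern of $\widetilde{\Phi}_f$ then forces $g$ to be diagonal and $e$ to be the shift. I expect no genuine obstacle beyond matching these roles to \eqref{eq7.8}--\eqref{eq7.10} and checking that the products appearing in Lemma \ref{l3.5} collapse to $1$ for this particular $\Phi$.
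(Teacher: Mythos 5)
Your proposal is correct and is essentially the paper's own argument: the paper proves this lemma only by declaring it ``similar to Lemma \ref{l7.2}'', i.e.\ apply Lemma \ref{l3.5} with the generator roles permuted and then symmetrize the shift action by rescaling the second basis vector, which is exactly what you do, and your explicit computation $\widetilde{\Phi}_f(x,y)=(-1)^{x_e y_g}$, which makes the product formulas in Lemma \ref{l3.5} collapse to $1$, is the right verification of the constraints $\beta_2^{m_3}=1$ and $\gamma_2^{m_1}=1$. One caveat on the role bookkeeping you flagged: your argument (Schur's lemma via Lemma \ref{l3.3}, and type (I) pinning the $f$-scalar to $-1$) requires $\deg(V)=f$, whereas Proposition \ref{p7.1} literally assigns $\deg(V)=g$ and $\deg(W)=f$; this is an inconsistency in the paper itself rather than a gap in your proof, and your reading is the only one under which the stated lemma is true (with $\deg(V)=g$ the element $g$ would act by the scalar $-1$ and $f$ could not act by a scalar at all, since $e$ and $f$ anticommute on $V$), and it is also the reading the paper itself uses later, e.g.\ in $\ad_{Y_1}(Z_1)=Y_1Z_1-(f\triangleright Z_1)Y_1$ in the proof of Proposition \ref{p7.7}.
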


\begin{lemma}
The simple module $W$ has a basis  $\{Z_1,Z_2\}$ such that 
\begin{eqnarray}
&&g\triangleright Z_i=-Z_i, \ i=1,2,\label{eq7.11} \\
&&f\triangleright Z_1=\beta_3 Z_1,  f\triangleright Z_2=-\beta_3 Z_2,\label{eq7.12}\\
&&e\triangleright Z_1=\gamma_3 Z_2,  e\triangleright Z_2=\gamma_3 Z_1,\label{eq7.13}
\end{eqnarray}
where $\beta_3,\gamma_3\in \k$ are numbers $\beta_3^{m_2}=1, \gamma_3^{m_1}=1.$
\end{lemma}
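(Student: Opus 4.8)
The plan is to imitate the proof of Lemma \ref{l7.2} essentially verbatim, performing only a cyclic relabelling of the generators $e,f,g$. Since $W$ is a simple twisted Yetter-Drinfeld module of type (I) with $\dim(W)=2$, the statement is a specialisation of Lemma \ref{l3.5}: applied with the degree generator of $W$ in the role of $g_1$, that lemma already produces a basis on which the degree generator acts by the scalar $-1$, a second generator acts diagonally with eigenvalues $\beta_3$ and $-\beta_3$, and a third generator acts by the cyclic shift $Z_1'\mapsto Z_2'\mapsto \gamma_3'Z_1'$. Comparing with the display \eqref{eq7.11}--\eqref{eq7.13} fixes the roles: the scalar (degree) generator is $g$, the diagonal generator is $f$, and the shift generator is $e$. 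So the first step is to invoke Lemma \ref{l3.5} in this form, producing a preliminary basis $\{Z_1',Z_2'\}$ that realises \eqref{eq7.11} and \eqref{eq7.12} together with the unnormalised shift $e\triangleright Z_1'=Z_2'$, $e\triangleright Z_2'=\gamma_3'Z_1'$.

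The only computation to carry out is the evaluation of the relevant $2$-cocycles $\widetilde{\Phi}_g$, via the definition \eqref{e2.5}, using the explicit associator $\Phi(e^{i_1}f^{i_2}g^{i_3},\ldots)=(-1)^{i_1j_2k_3}$ of Proposition \ref{p7.1}. Because this $\Phi$ only reads the $e$-exponent of its first argument, the $f$-exponent of its second, and the $g$-exponent of its third, almost every value in sight collapses to $1$. A brief case check then confirms two facts: the ordering ratio $\widetilde{\Phi}_g(f,e)/\widetilde{\Phi}_g(e,f)$ equals $-1$, which has order $2$ and so is consistent with $\dim(W)=2$; and $\prod_{i=1}^{m_2-1}\widetilde{\Phi}_g(f,f^i)=1$ while $\prod_{i=1}^{m_1-1}\widetilde{\Phi}_g(e,e^i)=1$. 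The first product yields $\beta_3^{m_2}=1$ and the second yields $(\gamma_3')^{m_1/2}=1$.

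Finally I would symmetrise the shift exactly as in Lemma \ref{l7.2}: choosing $\gamma_3\in\k$ with $\gamma_3^2=\gamma_3'$ and replacing the basis by $Z_1=Z_1'$, $Z_2=\gamma_3^{-1}Z_2'$ leaves \eqref{eq7.11} and \eqref{eq7.12} untouched and converts the unnormalised shift into the symmetric form \eqref{eq7.13}, namely $e\triangleright Z_1=\gamma_3 Z_2$ and $e\triangleright Z_2=\gamma_3 Z_1$. The required order then drops out, since $\gamma_3^{m_1}=(\gamma_3^2)^{m_1/2}=(\gamma_3')^{m_1/2}=1$.

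I expect no genuine obstacle: the lemma is a transcription of Lemma \ref{l3.5}, with the proof of Lemma \ref{l7.2} as a ready template. The only point demanding care is the bookkeeping — fixing once and for all which of $e,f,g$ is the scalar generator, which is diagonal, and which is the shift (here $g$, $f$, $e$ respectively), and ensuring this assignment is the cyclic counterpart of those made for $U$ and $V$, so that the three module descriptions mesh in the Dynkin-diagram computations that the proof of Proposition \ref{p7.1} will rely on.
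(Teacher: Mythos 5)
Your proposal is correct and takes essentially the same route as the paper: the paper's entire proof of this lemma is the remark ``Similar to Lemma \ref{l7.2}'', which is exactly your argument of invoking Lemma \ref{l3.5} for the degree generator, checking via the explicit associator that the relevant products of $\widetilde{\Phi}$-values are $1$ and the ordering ratio is $-1$, and then rescaling $Z_2'$ by a square root of $\gamma_3'$ to symmetrize the shift. Your identification of $g$ (rather than $f$, as Proposition \ref{p7.1} literally states) as the degree of $W$ is also the right resolution of the paper's internal inconsistency, since type (I) forces the degree element to be the one acting by the scalar $-1$ in \eqref{eq7.11}.
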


In the following, $(\beta_i, \gamma_i),\ i=1,2,3$ will be called {\bf structure constants} of $U,V,W$ respectively. It is clear that the structure constants depend on the choice of the bases of $U,V,W$.
\begin{remark}\label{r5.7}
The structure constants $(\beta_i, \gamma_i), i=1,2,3$ can be changed to be $(-\beta_i, \gamma_i),$ $(\beta_i, -\gamma_i)$ and $(-\beta_i, -\gamma_i)$ if we transform the bases of $U,V$ and $W$ respectively.
For example, let $\overline{X}_1=X_2, \overline{X}_2=X_1,$ the constants will changed to be $(-\beta_1, \gamma_1)$. Let $\overline{X}_1=X_1, \overline{X}_2=-X_2,$ the constants will be  $(\beta_1, -\gamma_1)$. Let
$\overline{X}_1=X_2, \overline{X}_2=-X_1,$ the constants will changed to be  $(-\beta_1, -\gamma_1)$.
 \end{remark}

\begin{proposition} \label{p7.6}
Keep the previous notations, we have
\begin{itemize}
\item[(1).] If $\beta_2\beta_3\neq \pm 1$, then $B(V\oplus W)$ is infinite dimensional.
\item[(2).] If $\beta_1\gamma_2\neq \pm 1$, then $B(U\oplus V)$ is infinite dimensional.
\item[(3).] If $\gamma_1\gamma_3\neq \pm 1$, then $B(U\oplus W)$ is infinite dimensional.
\end{itemize}
\end{proposition}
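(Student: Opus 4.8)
The plan is to prove all three statements by one uniform method: realise the relevant rank-two sub-object in a \emph{diagonal} Yetter--Drinfeld category, write down its generalized Dynkin diagram, and match it against Heckenberger's classification \cite{H4}. I will describe part (1) in detail and indicate the small modification needed for (2) and (3).

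For part (1), the support group of $V\oplus W$ is $\langle f,g\rangle$, a product of two cyclic groups, so $\Phi|_{\langle f,g\rangle}$ is abelian by Lemma \ref{l2.5} and $V\oplus W$ admits a standard basis by Lemma \ref{l2.9}. Here no construction is needed: by \eqref{eq7.8}--\eqref{eq7.9} and \eqref{eq7.11}--\eqref{eq7.12} the generators $f$ and $g$ already act diagonally on $\{Y_1,Y_2\}$ and on $\{Z_1,Z_2\}$, so $\{Y_1,Y_2,Z_1,Z_2\}$ is itself a standard basis. For (2) and (3) the generator acting by a \emph{swap} (namely $e$ on $V$ and on $W$, and $g$ on $U$) lies in the support group of the pair, and I would first diagonalise it: passing from $Y_1,Y_2$ to $Y_1\pm Y_2$ converts $e\triangleright Y_1=\gamma_2Y_2$, $e\triangleright Y_2=\gamma_2Y_1$ into $e\triangleright(Y_1\pm Y_2)=\pm\gamma_2(Y_1\pm Y_2)$, and likewise for $U$ and $W$. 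This yields standard bases in all three cases.

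Next I would compute the braiding matrix $(q_{ij})$, where $q_{ij}$ is the scalar by which $\deg(v_i)$ acts on $v_j$. Each module is of type (I), so its degree acts by $-1$; hence all four diagonal entries are $q_{ii}=-1$, and the two intra-module products contribute no edge since $q_{ij}q_{ji}=(-1)(-1)=1$. The four inter-module products $q_{ij}q_{ji}$ (one index in $V$, the other in $W$) are read off from the explicit actions and equal $\pm\beta_2\beta_3$, with both signs occurring. Hence, when $\beta_2\beta_3\neq\pm1$, all four differ from $1$, and the generalized Dynkin diagram of $V\oplus W$ is the complete bipartite graph $K_{2,2}$ --- a $4$-cycle --- with all four vertices labelled $-1$. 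The same computation gives cross products $\pm\beta_1\gamma_2$ in case (2) and $\pm\gamma_1\gamma_3$ in case (3), so the hypotheses $\beta_1\gamma_2\neq\pm1$ and $\gamma_1\gamma_3\neq\pm1$ again force the diagram to be this $4$-cycle with all vertices $-1$.

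Finally, since the sub-object is of diagonal type, Proposition \ref{p2.11} lets me replace $B(V\oplus W)$ by a twist-equivalent Nichols algebra $B(U')$ in an untwisted category ${_{\k\widehat{H}}^{\k\widehat{H}}\mathcal{YD}}$ (with $H$ the support group of the pair) sharing the \emph{same} generalized Dynkin diagram; as twisting only rescales the multiplication on a fixed graded space, it preserves finite-dimensionality. Comparing with Heckenberger's list of finite-dimensional diagonal Nichols algebras \cite{H4}, a connected rank-$4$ diagram that is a $4$-cycle with all vertices $-1$ never occurs (heuristically it is the ``all $-1$'' analogue of the affine diagram $A_3^{(1)}$, whose root system is infinite), so $B(U')$, and therefore $B(V\oplus W)$, is infinite dimensional. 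I expect the only genuine obstacle to be this matching step: one must verify that for \emph{every} parameter value $t=\beta_2\beta_3\neq\pm1$ (resp.\ $\beta_1\gamma_2$, $\gamma_1\gamma_3$) the $4$-cycle is absent from the finite list of \cite{H4}; the sign bookkeeping when diagonalising the swap generators in (2) and (3) is routine but must be done carefully to confirm that all four cross edges are genuinely present.
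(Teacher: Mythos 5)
Your proposal is correct and follows essentially the same route as the paper: diagonalise the swap generator via $Y_1\pm Y_2$ (resp.\ $X_1\pm X_2$, $Z_1\pm Z_2$) where needed, observe that all four vertex labels are $-1$ and that under the stated hypotheses all four cross edges, labelled $\pm\beta_2\beta_3$ (resp.\ $\pm\beta_1\gamma_2$, $\pm\gamma_1\gamma_3$), are present, and rule out the resulting $4$-cycle with all vertices $-1$ by Heckenberger's classification. The only (harmless) difference is that the paper notes $\Phi$ restricts to the \emph{trivial} $3$-cocycle on the rank-two support group in each case, so the Nichols algebra already lives in an ordinary Yetter--Drinfeld category and the detour through $\widehat{H}$ via Proposition \ref{p2.11} is unnecessary.
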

\begin{proof}
(1). Let $H=\langle f\rangle \times \langle g\rangle$ and $\Psi=\Phi|_{H}$. Then it is obvious that $\Psi=1$ and $B(V\oplus W)\in {_{\k H}^{\k H} \mathcal{YD}}$ is a Nichols algebra of diagonal type. 
The generalized Dynkin diagram $\mathcal{D}(V\oplus W)$ is 
\[ {\setlength{\unitlength}{1.5mm}
\Echainfour{}{$-1$}{$-1$}{$-1$}{$-1$}} \quad .\]
Comparing the classification result of finite-dimensional Nichols algebras in ${_{\k H}^{\k H} \mathcal{YD}}$, we obtain that $B(V\oplus W)$ is infinite dimensional.

(2). Let $\overline{Y}_1=Y_1+Y_2,\ \overline{Y}_2=Y_1-Y_2$. Then it is clear that $\{\overline{Y}_1,\overline{Y}_2\}$ is a basis of $V$ which satisfies 
\begin{eqnarray*}
&&f\triangleright \overline{Y}_i=-\overline{Y}_i, \ i=1,2,\\
&&e\triangleright \overline{Y}_1=\gamma_2 \overline{Y}_1,  e\triangleright \overline{Y}_2=-\gamma_2 \overline{Y}_2,\\
&&g\triangleright \overline{Y}_1=\beta_2 \overline{Y}_2,  g\triangleright \overline{Y}_2=\beta_2 \overline{Y}_1.
\end{eqnarray*}
The rest of the proof is similar to (1).

(3).  Let $\overline{X}_1=X_1+X_2,\ \overline{X}_2=X_1-X_2$. Then it is clear that $\{\overline{X}_1,\overline{X}_2\}$ is a basis of $U$ which satisfies 
\begin{eqnarray*}
&&e\triangleright \overline{X}_i=-\overline{X}_i, \ i=1,2,\\
&&g\triangleright \overline{X}_1=\gamma_1 \overline{X}_1,  g\triangleright \overline{X}_2=-\gamma_1 \overline{X}_2,\\
&&f\triangleright \overline{X}_1=\beta_1 \overline{X}_2,  f\triangleright \overline{X}_2=\beta_1 \overline{X}_1.
\end{eqnarray*}
Similarly, let $\overline{Z}_1=Z_1+Z_2,\ \overline{Z}_2=Z_1-Z_2$. Then $\{\overline{Z}_1,\overline{Z}_2\}$ is a basis of $W$ and we have 
\begin{eqnarray*}
&&g\triangleright \overline{Z}_i=-\overline{Z}_i, \ i=1,2,\\
&&e\triangleright \overline{Z}_1=\gamma_3 \overline{Z}_1,  f\triangleright \overline{Z}_2=-\gamma_3 \overline{Z}_2,\\
&&f\triangleright \overline{Z}_1=\beta_3 \overline{Z}_2,  e\triangleright \overline{Z}_2=-\beta_3 \overline{Z}_1.
\end{eqnarray*}
The same as (1), if $\gamma_1\gamma_3\neq \pm 1$, then $B(U\oplus W)$ is infinite dimensional.
\end{proof}

If the structure constants of $U,V,W$ satisfy $\beta_2\beta_3= \pm 1$, $\beta_1\gamma_2= \pm 1$ and $\gamma_1\gamma_3= \pm 1$, then one can show that $B(U\oplus V), B(U\oplus W), B(V\oplus W)$ are all finite dimensional, the proof is the same as that of \cite[Proposition 5.1]{HYZ}.  In what follows, we will prove that $B(U\oplus V\oplus W)$ is infinite dimensional. For $\Z^3$-graded homogenous elements $X, Y\in B(U\oplus V\oplus W)$, we denote $\deg(X\otimes Y)=(\deg(X),\ \deg(Y))$.

\begin{proposition}\label{p7.7}
Suppose $\beta_2\beta_3= \pm 1$, $\beta_1\gamma_2= \pm 1$ and $\gamma_1\gamma_3= \pm 1$, then $B(U\oplus V\oplus W)$ is infinite dimensional.
\end{proposition}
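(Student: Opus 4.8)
The plan is to isolate the single source of nondiagonality and to convert it into an infinite-height generator. Since $\Phi(e^{i_1}f^{i_2}g^{i_3},\dots)=(-1)^{i_1j_2k_3}$ restricts trivially to each of the rank-two subgroups $\langle e,f\rangle,\langle e,g\rangle,\langle f,g\rangle$, Lemma~\ref{l2.5} shows that $U\oplus V$, $U\oplus W$ and $V\oplus W$ are of diagonal type; under the standing hypotheses $\beta_2\beta_3=\pm1$, $\beta_1\gamma_2=\pm1$, $\gamma_1\gamma_3=\pm1$ these three Nichols algebras are finite dimensional (as recalled before the statement, by the argument of \cite{HYZ}) with explicit quadratic relations. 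First I would invoke Remark~\ref{r5.7}: a short check shows that the allowed sign changes for the bases of $U$, $V$, $W$ suffice to normalize all three products to $+1$ simultaneously, reducing to a single configuration in which every one of the six diagonal self-braidings equals $-1$ (type~(I)) and, after diagonalizing each pair, each cross-braiding $\widetilde q$ between generators of two different modules equals exactly $\beta_1\gamma_2$, $\gamma_1\gamma_3$ or $\beta_2\beta_3$, hence $+1$.

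The decisive observation is that $M=U\oplus V\oplus W$ has no rank-three braided subspace of diagonal type, so neither Heckenberger's list nor Proposition~\ref{p2.11} applies to $M$ directly: a diagonal sub-braiding with generators of $G$-degrees $e,f,g$ would require a simultaneous eigenvector for $e,f,g$ inside each module, but on $U$ the operators $f$ and $g$ anticommute, since from \eqref{e2.5} one computes $\widetilde{\Phi}_e(f,g)/\widetilde{\Phi}_e(g,f)=-1$ (which is also why $\dim U=2$), and symmetrically on $V$ and $W$. I would therefore work directly inside the $\Z^3$-graded braided Hopf algebra $B(M)$ and search for a generator of infinite height in the mixed multidegree $(1,1,1)$. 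Choosing eigenvectors $x\in U$, $y\in V$, $z\in W$, I would form the iterated braided commutator $w=[[x,y]_c,z]_c$ of $G$-degree $efg$ and compute its self-braiding $q_{ww}$ directly from \eqref{eq2.6} and \eqref{n3.8}. This is exactly where the triple interaction is felt: after normalization the pairwise contributions give $(-1)^3(\beta_1\gamma_2)(\gamma_1\gamma_3)(\beta_2\beta_3)=-1$, while the nonassociativity contributes a further factor, governed by $\Phi(e,f,g)=-1$, equal to $-1$, so that $q_{ww}=1$. A PBW generator of self-braiding $1$ has infinite height, i.e. $w^n\neq0$ for every $n$, whence $\dim B(M)=\infty$.

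The hardest part is twofold. The braiding of mixed products must be propagated through the associator $\Phi$ rather than through a diagonal braiding matrix --- precisely the bookkeeping flagged by the convention $\deg(X\otimes Y)=(\deg X,\deg Y)$ --- and because no single eigenvector of $U$ is adapted to both the $U$--$V$ and the $U$--$W$ pairings, the self-braiding of $w$ cannot be read off from the pairwise diagonal data and must be evaluated by hand. More seriously, one must prove that $w$ does not already vanish in $B(M)$, that is, that it is not killed by the quadratic relations inherited from the three finite pairwise subalgebras; establishing this non-degeneracy uniformly over the normalized configuration is the technical core. Should $w$ vanish for the chosen bracketing, the fallback is to test the multidegree $(2,1,1)$, or to exhibit a rank-two diagonal sub-braiding formed by $w$ together with one original generator and match it against the non-finite-type generalized Dynkin diagrams of \cite{H4}.
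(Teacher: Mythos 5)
Your proposal has a genuine gap at its central step, and it occurs exactly where the nondiagonality bites. You propose to take $w=[[x,y]_c,z]_c$ of multidegree $(1,1,1)$ and to read off a self-braiding $q_{ww}=1$ as a product of pairwise contributions times an associator factor $\Phi(e,f,g)=-1$. But no element of $G$-degree $efg$ built from $x\in U$, $y\in V$, $z\in W$ has a well-defined scalar self-braiding of this kind: the braiding on such an element is $c(w\otimes w)=\bigl((efg)\triangleright w\bigr)\otimes w$, and the action of $efg$ is not diagonal on the degree-$(1,1,1)$ component. Indeed, since $g\triangleright X_i\propto X_{3-i}$, $e\triangleright Y_i\propto Y_{3-i}$ and $e\triangleright Z_i\propto Z_{3-i}$, the operator $efg$ swaps the two ``index patterns''; the paper computes explicitly (Step 3 of its proof) that $(efg)\triangleright E=-F$ and $(efg)\triangleright F=E$ for $E=\ad_{X_1}(\ad_{Y_1}(Z_1))$, $F=\ad_{X_2}(\ad_{Y_2}(Z_2))$, so $(efg)^2$ acts as $-\id$ on this span and the only eigenvalues of $efg$ there are $\pm i$. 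Consequently your $w$ is either not an $efg$-eigenvector (so $q_{ww}$ does not exist), or, after complexifying to an eigenvector such as $E\pm iF$, its self-braiding is $\pm i$ --- a fourth root of unity, which suggests \emph{finite} height, not infinite. The heuristic of multiplying pairwise diagonal data cannot be repaired, because (as you yourself observe) $U$, $V$, $W$ admit no common eigenvectors, so there is no diagonal frame in which the computation takes place. Moreover, even if one granted $q_{ww}=1$, the inference ``a PBW generator of self-braiding $1$ has infinite height'' rests on Kharchenko's PBW theory for Nichols algebras of \emph{diagonal} type; the braided subspace containing $w$ is not diagonal, so that theory does not apply, and the nonvanishing $w\neq 0$ is in any case left unproved (you flag it as the technical core but do not close it).

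The paper's proof is designed precisely to avoid both obstacles. It normalizes the signs to $\beta_2\beta_3=\beta_1\gamma_2=\gamma_1\gamma_3=-1$ (your normalization to $+1$ is equally achievable via Remark \ref{r5.7}; that part of your argument is fine, as is the observation that the three pairwise Nichols algebras are diagonal and finite dimensional). It then works exclusively with comultiplication and the $\Z^3$-grading: it computes $\D$ of the adjoint elements, proves that $E$, $F$, $M=\ad_{Y_1}(\ad_{X_1}(Z_2))$, $N=\ad_{Y_2}(\ad_{X_2}(Z_1))$ are linearly independent, uses the twisted action $(efg)\triangleright M=-N$ to form $\ad_E(M)=EM+NE$ of multidegree $(2,2,2)$, shows that $\ad_E(M)$ is nonzero and not in the span of $M^2$, $MN$, $NM$, $N^2$, and finally proves $(\ad_E(M))^n\neq 0$ for all $n$ by isolating the component of $\D\bigl((\ad_E(M))^n\bigr)$ in bidegree $\bigl((2n-2)(e_1+e_2+e_3),\,2(e_1+e_2+e_3)\bigr)$. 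In other words, the infinite-height generator lives in degree $(2,2,2)$, not $(1,1,1)$, and its powers are controlled by a direct coproduct-degree argument rather than by any self-braiding scalar. To salvage your approach you would have to replace the self-braiding computation by an honest analysis of the nondiagonal $efg$-action --- at which point you are led essentially to the paper's construction.
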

\begin{proof}

Let $T(U\oplus V\oplus W)$ be the tensor algebra of $U\oplus V\oplus W$, $\mathcal{I}$ the maximal $\N$-graded Hopf ideal contained in $\bigoplus_{n\geq 2}(U\oplus V\oplus W)^{\otimes \overrightarrow{n}}$. Thus $B(U\oplus V\oplus W)=T(U\oplus V\oplus W)/\mathcal{I}$ by definition.  According to Proposition \ref{p2.9}, $B(U\oplus V\oplus W)$ is $\Z^3$-graded with $\deg(U)=e_1, \deg(V)=e_2, \deg(W)=e_3$, where $\{e_1, e_2, e_3\}$ are free generators of $\Z^3$.
Next we will prove that $B(U\oplus V\oplus W)$ is infinite dimensional. Without loss of generality, we can assume that  $\beta_2\beta_3= -1$, $\beta_1\gamma_2= -1$ and $\gamma_1\gamma_3=-1$ by 
Remark \ref{r5.7}.

The remaining proof will be divided into four steps.

{\bf Step 1.} We will consider the comultiplications of some elements in the spaces $\ad_V(W),$ $\ad_U(V),$ $\ad_U(W) \subset B(U\oplus V\oplus W)$. In $\ad_V(W)$ we have 
\begin{eqnarray*}
\ad_{Y_1}(Z_1)=Y_1Z_1-(f\triangleright Z_1)Y_1=Y_1Z_1-\beta_3 Z_1Y_1,
\end{eqnarray*}
thus
\begin{equation}\label{eq7.11}
\begin{split}
\D(\ad_{Y_1}(Z_1))=&\D(Y_1Z_1-\beta_3 Z_1Y_1)\\
=&(1\otimes Y_1+Y_1\otimes 1)(1\otimes Z_1+ Z_1\otimes 1)\\
&-\beta_3(1\otimes Z_1+Z_1\otimes 1)(1\otimes Y_1+ Y_1\otimes 1)\\
=&1\otimes Y_1Z_1+Y_1Z_1\otimes 1+\beta_3 Z_1\otimes Y_1+Y_1\otimes Z_1\\
&-\beta_3[1\otimes Z_1Y_1+Z_1Y_1\otimes 1+\beta_2 Y_1\otimes Z_1+Z_1\otimes Y_1]\\
=&1\otimes \ad_{Y_1}(Z_1)+ \ad_{Y_1}(Z_1)\otimes 1 +2Y_1\otimes Z_1.
\end{split}
\end{equation}
Similarly, we have 
\begin{eqnarray}
&&\D(\ad_{Y_1}(Z_2))=1\otimes \ad_{Y_1}(Z_2)+ \ad_{Y_1}(Z_2)\otimes 1, \label{eq7.12}\\
&&\D(\ad_{Y_2}(Z_1))=1\otimes \ad_{Y_2}(Z_1)+ \ad_{Y_2}(Z_1)\otimes 1,\label{eq7.13}\\
&&\D(\ad_{Y_2}(Z_2))=1\otimes \ad_{Y_2}(Z_2)+ \ad_{Y_2}(Z_2)\otimes 1 +2Y_2\otimes Z_2. \label{eq7.14}
\end{eqnarray}
The identities \eqref{eq7.12}-\eqref{eq7.13} imply that $\ad_{Y_1}(Z_2)=0, \ad_{Y_2}(Z_1)=0$. 
%So $\ad_V(W)$ has a basis $\{\ad_{Y_1}(Z_1), \ad_{Y_2}(Z_2)\}$.  
%Moreover, we have 
%\begin{eqnarray}
%&& e \triangleright (\ad_{Y_1}(Z_1))=-\gamma_2\gamma_3 \ad_{Y_2}(Z_2),\ e \triangleright (\ad_{Y_2}(Z_2))=-\gamma_2\gamma_3 \ad_{Y_1}(Z_1).\\
%&& f \triangleright (\ad_{Y_1}(Z_1))=-\beta_3 \ad_{Y_1}(Z_1),\  f\triangleright (\ad_{Y_2}(Z_2))=\beta_3 \ad_{Y_2}(Z_2).\\
%&& g \triangleright (\ad_{Y_1}(Z_1))=-\beta_2 \ad_{Y_1}(Z_1),\ g \triangleright (\ad_{Y_2}(Z_2))=\beta_2 \ad_{Y_2}(Z_2).
%\end{eqnarray}

In $\ad_U(V)$,  we have 

\begin{eqnarray}
&&\D(\ad_{X_1}(Y_1))=1\otimes \ad_{X_1}(Y_1) +\ad_{X_1}(Y_1)\otimes 1+X_1\otimes (Y_1+Y_2), \\
&&\D(\ad_{X_1}(Y_2))=1\otimes \ad_{X_1}(Y_2)+ \ad_{X_1}(Y_2)\otimes 1+X_1\otimes (Y_1+Y_2),\\
&&\D(\ad_{X_2}(Y_1))=1\otimes \ad_{X_2}(Y_1)+\ad_{X_2}(Y_1)\otimes 1+ X_2\otimes(Y_1-Y_2),\\
&&\D(\ad_{X_2}(Y_2))=1\otimes \ad_{X_2}(Y_2)+\ad_{X_2}(Y_2)\otimes 1 +X_2\otimes(Y_2-Y_1).
\end{eqnarray}
It is easy to see that 
\begin{eqnarray}
&&\ad_{X_1}(Y_1)-\ad_{X_1}(Y_2)=0,\\
&&\ad_{X_2}(Y_1)+\ad_{X_2}(Y_2)=0.
\end{eqnarray}

Similarly in $\ad_U(W)$,  we have 

\begin{eqnarray}
&&\D(\ad_{X_1}(Z_1))=1\otimes \ad_{X_1}(Z_1) +\ad_{X_1}(Z_1)\otimes 1+X_1\otimes Z_1+X_2\otimes Z_2,\label{eq7.21} \\
&&\D(\ad_{X_1}(Z_2))=1\otimes \ad_{X_1}(Z_2)+ \ad_{X_1}(Z_2)\otimes 1+X_1\otimes Z_2+X_2\otimes Z_1,\label{eq7.22} \\
&&\D(\ad_{X_2}(Z_1))=1\otimes \ad_{X_2}(Z_1)+\ad_{X_2}(Z_1)\otimes 1+ X_2\otimes Z_1+X_1\otimes Z_2,\label{eq7.23}\\
&&\D(\ad_{X_2}(Z_2))=1\otimes \ad_{X_2}(Z_2)+\ad_{X_2}(Z_2)\otimes 1 + X_1\otimes Z_1+X_2\otimes Z_2.\label{eq7.24}
\end{eqnarray}
By \eqref{eq7.21}-\eqref{eq7.24} we get 
\begin{eqnarray}
&&\ad_{X_1}(Z_1)-\ad_{X_2}(Z_2)=0,\label{eq7.28}\\
&&\ad_{X_1}(Z_2)-\ad_{X_2}(Z_1)=0.\label{eq7.29}
\end{eqnarray}

{\bf Step 2.} We will prove that $\ad_{X_1}(\ad_{Y_1}(Z_1)),$ $\ad_{X_2}(\ad_{Y_2}(Z_2)),$ $\ad_{Y_1}(\ad_{X_1}(Z_2))$ and $\ad_{Y_2}(\ad_{X_2}(Z_1))$ are linear independent in $B(U\oplus V\oplus W)$. Firstly we have 
\begin{equation}\label{eq7.27}
\begin{split}
&\D(\ad_{X_1}(\ad_{Y_1}(Z_1)))\\
&=\D(X_1\ad_{Y_1}(Z_1)-e\triangleright (\ad_{Y_1}(Z_1))X_1)\\
&=\D(X_1\ad_{Y_1}(Z_1)+\gamma_2\gamma_3 \ad_{Y_2}(Z_2) X_1)\\
&=(1\otimes X_1+X_1\otimes 1) (1\otimes \ad_{Y_1}(Z_1)+ \ad_{Y_1}(Z_1)\otimes 1 +2Y_1\otimes Z_1)\\
&\ \ \ +\gamma_2\gamma_3 (1\otimes \ad_{Y_2}(Z_2)+ \ad_{Y_2}(Z_2)\otimes 1 +2Y_2\otimes Z_2)(1\otimes X_1+X_1\otimes 1)\\
&=1\otimes \ad_{X_1}(\ad_{Y_1}(Z_1)) +\ad_{X_1}(\ad_{Y_1}(Z_1))\otimes 1+ X_1\otimes \ad_{Y_1}(Z_1)\\
&\ \ \ +X_2\otimes  \ad_{Y_2}(Z_2)-2\gamma_2Y_2\otimes \ad_{X_1}(Z_1) -2X_1Y_1\otimes Z_1-2\gamma_2Y_2X_2\otimes Z_2.
\end{split}
\end{equation}
Here the third identity follows from \eqref{eq7.11} and \eqref{eq7.14}. Similarly we have 

\begin{equation}\label{eq7.28}
\begin{split}
&\D(\ad_{X_2}(\ad_{Y_2}(Z_2)))\\
&=\D(X_2\ad_{Y_2}(Z_2)-e\triangleright (\ad_{Y_2}(Z_2))X_2)\\
&=\D(X_2\ad_{Y_2}(Z_2)+\gamma_2\gamma_3 \ad_{Y_1}(Z_1) X_2)\\
&=(1\otimes X_2+X_2\otimes 1)\Big (1\otimes \ad_{Y_2}(Z_2)+ \ad_{Y_2}(Z_2)\otimes 1 +2Y_2\otimes Z_2\Big)\\
&\ \ \ +\gamma_2\gamma_3 \Big(1\otimes \ad_{Y_1}(Z_1)+ \ad_{Y_1}(Z_1)\otimes 1 +2Y_1\otimes Z_1\Big)(1\otimes X_2+X_2 \otimes 1)\\
&=1\otimes \ad_{X_2}(\ad_{Y_2}(Z_2)) + \ad_{X_2}(\ad_{Y_2}(Z_2)) \otimes 1-X_1\otimes \ad_{Y_1}(Z_1)\\
&\ \ \ +X_2\otimes  \ad_{Y_2}(Z_2)-2\gamma_2Y_1\otimes \ad_{X_2}(Z_2)-2X_2Y_2\otimes Z_2-2\gamma_2Y_1X_1\otimes Z_1,
\end{split}
\end{equation}

\begin{equation}\label{eq7.29}
\begin{split}
&\D(\ad_{Y_1}(\ad_{X_1}(Z_2)))\\
&=\D(Y_1\ad_{X_1}(Z_2)-f\triangleright (\ad_{X_1}(Z_2))Y_1)\\
&=\D(Y_1\ad_{X_1}(Z_2)-\beta_1\beta_3 \ad_{X_1}(Z_2) Y_1)\\
&=(1\otimes Y_1+Y_1\otimes 1) \\
&\ \ \ \ \ \ \ \ \ \ \ \ \ \ \ \times\Big(1\otimes \ad_{X_1}(Z_2)+ \ad_{X_1}(Z_2)\otimes 1+X_1\otimes Z_2+X_2\otimes Z_1\Big)\\
&\ \ \ -\beta_1\beta_3 \Big(1\otimes \ad_{X_1}(Z_2)+ \ad_{X_1}(Z_2)\otimes 1+X_1\otimes Z_2+X_2\otimes Z_1\Big)\\
&\ \ \ \ \ \ \ \ \ \ \ \ \ \ \ \ \ \ \ \ \ \ \ \ \ \ \ \ \ \ \ \ \ \ \ \ \ \ \ \ \ \ \ \ \ \ \ \ \ \ \ \ \ \ \ \ \ \ \ \ \ \ \ \ \times (1\otimes Y_1+Y_1\otimes 1)\\
&=1\otimes \ad_{Y_1}(\ad_{X_1}(Z_2)) +\ad_{Y_1}(\ad_{X_1}(Z_2))\otimes 1+ (Y_1+Y_2)\otimes \ad_{X_1}(Z_2)\\
&\ \ \ +\beta_1X_2\otimes \ad_{Y_1}(Z_1) +(Y_1X_2-\beta_1X_2Y_1)\otimes Z_1+\ad_{Y_1}(X_1)\otimes Z_2
\end{split}
\end{equation}
and 
\begin{equation}\label{eq7.30}
\begin{split}
&\D(\ad_{Y_2}(\ad_{X_2}(Z_1)))\\
&=\D(Y_2\ad_{X_2}(Z_1)-f\triangleright (\ad_{X_2}(Z_1))Y_2)\\
&=\D(Y_2\ad_{X_2}(Z_1)-\beta_1\beta_3 \ad_{X_2}(Z_1) Y_2)\\
&=(1\otimes Y_2+Y_2\otimes 1) \\
&\ \ \ \ \ \ \ \ \ \ \ \ \ \ \ \times\Big(1\otimes \ad_{X_2}(Z_1)+\ad_{X_2}(Z_1)\otimes 1+ X_2\otimes Z_1+X_1\otimes Z_2\Big)\\
&\ \ \ -\beta_1\beta_3 \Big(1\otimes \ad_{X_2}(Z_1)+\ad_{X_2}(Z_1)\otimes 1+ X_2\otimes Z_1+X_1\otimes Z_2\Big)\\
&\ \ \ \ \ \ \ \ \ \ \ \ \ \ \ \ \ \ \ \ \ \ \ \ \ \ \ \ \ \ \ \ \ \ \ \ \ \ \ \ \ \ \ \ \ \ \ \ \ \ \ \ \ \ \ \ \ \ \ \ \ \ \ \ \times (1\otimes Y_2+Y_2\otimes 1)\\
&=1\otimes \ad_{Y_2}(\ad_{X_2}(Z_1)) +\ad_{Y_2}(\ad_{X_2}(Z_1))\otimes 1+ (Y_2-Y_1)\otimes \ad_{X_2}(Z_1)\\
&\ \ \ -\beta_1X_1\otimes \ad_{Y_2}(Z_2) +(Y_2X_1+\beta_1X_1Y_2)\otimes Z_2+\ad_{Y_2}(X_2)\otimes Z_1.
\end{split}
\end{equation}

Now let $k_1,k_2,k_3,k_4\in \k$ such that $$k_1 \ad_{X_1}(\ad_{Y_1}(Z_1))+k_2\ad_{X_2}(\ad_{Y_2}(Z_2))+k_3 \ad_{Y_1}(\ad_{X_1}(Z_2))+k_4 \ad_{Y_2}(\ad_{X_2}(Z_1))=0.$$
By \eqref{eq7.27}-\eqref{eq7.30}, the homogenous term in the comultiplication of the left hand side of the above identity with degree $(e_2, e_1+e_3)$  is
\begin{equation*}
\begin{split}
&-2k_1\gamma_2Y_2\otimes \ad_{X_1}(Z_1)-2k_2\gamma_2Y_1\otimes \ad_{X_2}(Z_2)\\
&\ \ +k_3 (Y_1+Y_2)\otimes \ad_{X_1}(Z_2)+k_4(Y_2-Y_1)\otimes \ad_{X_2}(Z_1)\\
&=-2\gamma_2(k_1Y_2+k_2Y_1)\otimes \ad_{X_1}(Z_1)+ ((k_3-k_4)Y_1+(k_3+k_4)Y_2)\otimes  \ad_{X_1}(Z_2)\\
\end{split}
\end{equation*}
On the other hand, this element must be zero since comultiplication keep $Z^3$-degrees.  This implies $k_1=k_2=k_3=k_4=0$ because $\ad_{X_1}(Z_1)$ and  $\ad_{X_1}(Z_2)$ are linear independent by \eqref{eq7.21}-\eqref{eq7.22} and 
$\{Y_1,Y_2\}$ is a basis of $V$.

{\bf Step 3.} Let 
\begin{eqnarray*}
&&E=\ad_{X_1}(\ad_{Y_1}(Z_1)),\ \ \ \ \  F=\ad_{X_2}(\ad_{Y_2}(Z_2)),\\
&&M=\ad_{Y_1}(\ad_{X_1}(Z_2)),\ \ \ \ N=\ad_{Y_2}(\ad_{X_2}(Z_1)).
\end{eqnarray*}
 We will prove that $\ad_{E}(M)\neq 0 $ and can not be linear spanned by $M^2, MN,NM, N^2$. 
Firstly we have 
\begin{equation}
\begin{split}
&(efg) \triangleright E\\
&=\widetilde{\Phi}_{efg}(e,f)\widetilde{\Phi}_{efg}(ef,g) e\triangleright\{f\triangleright[g\triangleright (\ad_{X_1}(\ad_{Y_1}(Z_1))]\}\\
&=-e\triangleright\{f\triangleright[g\triangleright (\ad_{X_1}(\ad_{Y_1}(Z_1))]\}\\
&=-\beta_2\gamma_1 e\triangleright [f\triangleright (\ad_{X_2}(\ad_{Y_1}(Z_1))]\\
&=\beta_2\gamma_1 \beta_1\beta_3 e\triangleright (\ad_{X_2}(\ad_{Y_1}(Z_1))\\
&=\beta_2\gamma_1 \beta_1\beta_3 \gamma_2\gamma_3 (\ad_{X_2}(\ad_{Y_2}(Z_2))\\
&=-F.
\end{split}
\end{equation}
Similarly, one can show that $(efg) \triangleright F=E, (efg) \triangleright M=-N, (efg) \triangleright N=M.$ By \eqref{eq7.27}-\eqref{eq7.30}, the comultiplications of $E,F,M$ and $N$ can be written as  the forms of 
\begin{eqnarray}
&&\D(E)=E\otimes 1+1\otimes E+\sum \overline{E}_1\otimes \overline{E}_2,\label{eq7.32}\\
&&\D(F)=F\otimes 1+1\otimes F+\sum \overline{F}_1\otimes \overline{F}_2,\label{eq7.33}\\
&&\D(M)=M\otimes 1+1\otimes M+\sum \overline{M}_1\otimes \overline{M}_2,\label{eq7.34}\\
&&\D(N)=N\otimes 1+1\otimes N+\sum \overline{N}_1\otimes \overline{N}_2,\label{eq7.35}
\end{eqnarray}
Where $\deg(\overline{E}_2), \deg(\overline{F}_2), \deg(\overline{M}_2),\deg(\overline{N}_2)\in \{e_1+e_3, e_2+e_3, e_3\}$. In the following, let $\mathcal{Z}$ be the subset of $\Z^3$ defined by 
\begin{equation}
\mathcal{Z}=\{k_1e_1+k_2e_2+k_3e_3|\ k_1,k_2\leq k_3, \min\{k_1,k_2\}<k_3\}.
\end{equation}
Here $\min\{k_1,k_2\}$ means the smaller number of $\{k_1,k_2\}$. It is clear that $\deg(\overline{E}_2)$, $\deg(\overline{F}_2)$, $\deg(\overline{M}_2)$, $\deg(\overline{N}_2)\in \mathcal{Z}$. Moreover, for all homogenous elements $X,Y,Z \in B(U\oplus V\oplus W)$ such that $\deg(X),\deg(Y)\in \mathcal{Z}$, $\deg(Z)=k(e_1+e_2+e_3)$, we have $\deg(XY)\in \mathcal{Z},\  \deg(XZ)=\deg(ZX)\in \mathcal{Z}$.

 By \eqref{eq7.32}-\eqref{eq7.35}, we have 
\begin{equation}\label{eq7.37}
\begin{split}
&\D(\ad_{E}(M))\\
&=\D(EM-efg\triangleright ME)=\D(EM+NE)\\
&=\Big(E\otimes 1+1\otimes E+\sum \overline{E}_1\otimes \overline{E}_2\Big)\Big(M\otimes 1+1\otimes M+\sum \overline{M}_1\otimes \overline{M}_2\Big)\\
&\ \ +\Big(N\otimes 1+1\otimes N+\sum \overline{N}_1\otimes \overline{N}_2\Big)\Big(E\otimes 1+1\otimes E+\sum \overline{E}_1\otimes \overline{E}_2\Big)\\
&=\ad_{E}(M)\otimes 1+1\otimes \ad_{E}(M)+E\otimes M+F\otimes N+\sum T_1\otimes T_2,
\end{split}
\end{equation}
where 
\begin{eqnarray*}
\sum T_1\otimes T_2&=&(E\otimes 1+1\otimes E)(\sum \overline{M}_1\otimes \overline{M}_2)+(\sum \overline{E}_1\otimes \overline{E}_2)(M\otimes 1+1\otimes M)\\
&&+(N\otimes 1+1\otimes N)(\sum \overline{E}_1\otimes \overline{E}_2)+(\sum \overline{N}_1\otimes \overline{N}_2)(E\otimes 1+1\otimes E)\\
&&+(\sum \overline{E}_1\otimes \overline{E}_2)(\sum \overline{M}_1\otimes \overline{M}_2)+(\sum \overline{N}_1\otimes \overline{N}_2)(\sum \overline{E}_1\otimes \overline{E}_2).
\end{eqnarray*}
It is easy to see that $\deg(T_2)\in \mathcal{Z}$, so $\D(\ad_{E}(M))\neq 0$ since $E\otimes M+F\otimes N\neq 0$.
Next consider the comultiplications of $M^2, MN,NM, N^2$. Since  
\begin{eqnarray*}
\D(M^2)&=&(M\otimes 1+1\otimes M+\sum \overline{M}_1\otimes \overline{M}_2)(M\otimes 1+1\otimes M+\sum \overline{M}_1\otimes \overline{M}_2)\\
&=&M^2\otimes1 +1\otimes M^2+(M-N)\otimes M+(M\otimes 1+1\otimes M)(\sum \overline{M}_1\otimes \overline{M}_2)\\
&&+ (\sum \overline{M}_1\otimes \overline{M}_2)(M\otimes 1+1\otimes M)+(\sum \overline{M}_1\otimes \overline{M}_2)(\sum \overline{M}_1\otimes \overline{M}_2).
\end{eqnarray*}
The term in $\D(M^2)$ with degree $(e_1+e_2+e_3, e_1+e_2+e_3)$ is $(M-N)\otimes M$. Similarly, the terms in $\D(MN),\D(NM)$ and $\D(N^2)$ with degree $(e_1+e_2+e_3, e_1+e_2+e_3)$ are $M\otimes (N+M)$, $N\otimes (M-N)$ and $(M+N)\otimes N$
respectively. Now suppose that there exist $k_1,k_2,k_3,k_4$ such that 
\begin{equation}\label{eq7.38}
\ad_{E}(M)=k_1M^2+k_2MN+k_3NM+k_4N^2.
\end{equation} Considering the terms with degree $(e_1+e_2+e_3, e_1+e_2+e_3)$ in the comultiplications of the both sides of \eqref{eq7.38}, we obtain 
\begin{eqnarray*}
&&E\otimes M+F\otimes N\\
&=&k_1((M-N)\otimes M)+k_2M\otimes (N+M)+k_3N\otimes (M-N)+k_4(M+N)\otimes N\\
&=&M\otimes [(k_1+k_2)M+(k_2+k_4)N]+N\otimes[(k_3-k_1)M+(k_4-k_3)N].
\end{eqnarray*}
But this is impossible since $E,F,M,N$ are linear independent.
We have proved that $\ad_{E}(M)$ can not be linearly spanned by $M^2, MN,NM, N^2$.

{\bf Step 4.} We will prove that $(\ad_{E}(M))^n\neq 0$ for all $n\in \mathbbm{N}$ inductively, and this clearly implies that $B(U\otimes V\otimes W)$ is infinite dimensional.  In Step 3, we have show that $\ad_{E}(M)\neq 0$. Next suppose $(\ad_{E}(M))^{n-1}\neq 0$.

By \eqref{eq7.37},  $\D(\ad_{E}(M))=\ad_{E}(M)\otimes 1+1\otimes \ad_{E}(M)+E\otimes M+F\otimes N+\sum T_1\otimes T_2$, where $\deg(T_2)\in \mathcal{Z}$.  So the terms in $\D((\ad_{E}(M))^n)$ with degree of the form $$(k(e_1+e_2+e_3),l(e_1+e_2+e_3)), \ k,l\in \N$$  must be contained in
$$[\ad_{E}(M)\otimes 1+1\otimes \ad_{E}(M)+E\otimes M+F\otimes N]^n.$$
So the term in $\D((\ad_{E}(M))^n)$ with degree $((2n-2)(e_1+e_2+e_3), 2(e_1+e_2+e_3))$ is $$n(\ad_{E}(M))^{n-1}\otimes \ad_{E}(M)+S_1\otimes M^2+S_2\otimes MN+S_3\otimes NM+S_4\otimes N^2,$$ where $S_1,S_2,S_2,S_4$ are certain elements with degree $(2n-2)(e_1+e_2+e_3)$. By hypothesis, we have $n(\ad_{E}(M))^{n-1}\otimes \ad_{E}(M)\neq 0$. On the other hand, since $\ad_{E}(M)$ can not be spanned by $M^2, MN,NM, N^2$, we obtain $$n(\ad_{E}(M))^{n-1}\otimes \ad_{E}(M)+S_1\otimes M^2+S_2\otimes MN+S_3\otimes NM+S_4\otimes N^2\neq 0.$$ This implies $\D((\ad_{E}(M))^n)\neq 0$ and hence $(\ad_{E}(M))^n\neq 0$.
\end{proof}

It is clear that Proposition \ref{p7.1} follows from Propositions \ref{p7.6} and \ref{p7.7}. 

\subsection{A proof of Theorem \ref{t3.9}}

In this subsection, let $G=\langle g_1\rangle \times \langle g_2\rangle \times g_3\rangle$, $\Phi$ be a nonabelian $3$-cocycle on $G$, $V_1, V_2, V_3\in  {_{\k G}^{\k G} \mathcal{YD}^\Phi}$ 
be simple twisted Yetter-Drinfeld modules of type (C1) such that $\dim(V_i)=2$,  $\deg(V_i)=g_i$ for $1\leq i\leq 3$. In what follows, we denote $m_i=|g_i|$, $i=1,2,3$. By \eqref{eq2.4}, we have 
\begin{equation}\label{eq7.31}
\begin{split}
&\Phi(g_{1}^{i_{1}}g_{1}^{i_{2}}g_{1}^{i_{3}},g_{1}^{j_{1}}g_{1}^{j_{2}}g_{1}^{j_{3}},g_{1}^{k_{1}}g_{1}^{k_{2}}g_{1}^{k_{3}})\\
=& \prod_{l=1}^{3}\zeta_{m_l}^{c_{l}i_{l}[\frac{j_{l}+k_{l}}{m_{l}}]}
\prod_{1\leq s<t\leq 3}\zeta_{m_{t}}^{c_{st}i_{t}[\frac{j_{s}+k_{s}}{m_{s}}]}
\times \zeta_{(m_{1},m_{2},m_{3})}^{c_{123}i_{1}j_{2}k_{3}},
\end{split}
\end{equation}
Where $0\leq c_{l}< m_l$ for $1\leq l\leq 3$, $0\leq c_{st}<m_t$ for $1\leq s<t\leq 3$, $0\leq c_{123}<(m_{1},m_{2},m_{3})$. Furthermore , we have the following lemma.
\begin{lemma}\label{l7.8}
With the notations above, we have $c_{123}=\frac{(m_{1},m_{2},m_{3})}{2}$, that is 
\begin{equation}
\zeta_{(m_{1},m_{2},m_{3})}^{c_{123}k_{1}j_{2}i_{3}}=(-1)^{k_{1}j_{2}i_{3}}.
\end{equation}
\end{lemma}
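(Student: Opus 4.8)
The plan is to read off $c_{123}$ from the hypothesis $\dim(V_1)=2$ together with the explicit shape \eqref{eq7.31} of $\Phi$. By Lemma~\ref{l3.5}, $\dim(V_1)$ equals the order of $\widetilde{\Phi}_{g_1}(g_2,g_3)/\widetilde{\Phi}_{g_1}(g_3,g_2)$; since this order is $2$, the ratio must equal $-1$. Hence it suffices to evaluate this ratio explicitly in terms of the cocycle data and to check that it pins down $c_{123}$.

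First I would rewrite the ratio in the symmetric form provided by Lemma~\ref{l6.3},
\[
\frac{\widetilde{\Phi}_{g_1}(g_2,g_3)}{\widetilde{\Phi}_{g_1}(g_3,g_2)}
=\frac{\Phi(g_1,g_2,g_3)\Phi(g_2,g_3,g_1)\Phi(g_3,g_1,g_2)}
{\Phi(g_2,g_1,g_3)\Phi(g_1,g_3,g_2)\Phi(g_3,g_2,g_1)},
\]
and then evaluate each of the six factors by \eqref{eq7.31}. The crucial simplification is that in every factor the three arguments are three \emph{distinct} generators, so the exponent vectors of the second and third arguments have disjoint support; therefore every floor term $[\frac{j_l+k_l}{m_l}]$ vanishes (here one uses $m_l\ge 2$, which holds because $g_l$ acts by $-1$ on the type~(I) module $V_l$ and so has order at least $2$). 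Consequently all the single-index factors $\zeta_{m_l}^{c_l\cdots}$ and the double-index factors $\zeta_{m_t}^{c_{st}\cdots}$ reduce to $1$, and only the last factor $\zeta_{(m_1,m_2,m_3)}^{c_{123}i_1j_2k_3}$ can survive. This last factor is nontrivial precisely when the relevant exponents are $(i_1,j_2,k_3)=(1,1,1)$, i.e.\ only for $\Phi(g_1,g_2,g_3)$, so I expect $\Phi(g_1,g_2,g_3)=\zeta_{(m_1,m_2,m_3)}^{c_{123}}$ while the other five factors equal $1$. This yields $\widetilde{\Phi}_{g_1}(g_2,g_3)/\widetilde{\Phi}_{g_1}(g_3,g_2)=\zeta_{(m_1,m_2,m_3)}^{c_{123}}$.

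Combining the two steps, $\zeta_{(m_1,m_2,m_3)}^{c_{123}}$ has order $2$, i.e.\ equals $-1$. Since $0\le c_{123}<(m_1,m_2,m_3)$, the only exponent realizing $-1$ is $c_{123}=\frac{(m_1,m_2,m_3)}{2}$ (so $(m_1,m_2,m_3)$ is in particular even), whence $\zeta_{(m_1,m_2,m_3)}^{c_{123}k_1j_2i_3}=(-1)^{k_1j_2i_3}$, as asserted. Note that by Lemma~\ref{l6.3} the same conclusion would follow from either of the other two modules $V_2,V_3$, so the choice of $V_1$ is only for definiteness.

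The only delicate point is the bookkeeping in the six-fold evaluation of \eqref{eq7.31}: one must verify that no term carrying $c_l$ or $c_{st}$ escapes the disjoint-support cancellation and that the floor terms genuinely vanish. Once the disjoint-support observation is in place, every factor except $\Phi(g_1,g_2,g_3)$ collapses to $1$ and the remaining implication is immediate.
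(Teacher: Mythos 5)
Your proposal is correct and follows essentially the same route as the paper: invoke Lemma \ref{l3.5} to get $\widetilde{\Phi}_{g_1}(g_2,g_3)/\widetilde{\Phi}_{g_1}(g_3,g_2)=-1$ from $\dim(V_1)=2$, then evaluate this ratio via the explicit formula \eqref{eq7.31} to find it equals $\Phi(g_1,g_2,g_3)=\zeta_{(m_1,m_2,m_3)}^{c_{123}}$, forcing $c_{123}=\frac{(m_1,m_2,m_3)}{2}$. The paper simply asserts the outcome of the six-factor evaluation, whereas you carry out the disjoint-support bookkeeping explicitly; the content is the same.
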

\begin{proof}
By Lemma \ref{l3.5}, $\dim(V_1)=\Big|\frac{\widetilde{\Phi}_{g_1}(g_2,g_3)}{\widetilde{\Phi}_{g_1}(g_3,g_2)}\Big|$, the order of $\frac{\widetilde{\Phi}_{g_1}(g_2,g_3)}{\widetilde{\Phi}_{g_1}(g_3,g_2)}.$ Since $\dim(V_i)=2$ for $1\leq i\leq 3$, we have 
\begin{equation}\label{eq7.33}
\frac{\widetilde{\Phi}_{g_1}(g_2,g_3)}{\widetilde{\Phi}_{g_1}(g_3,g_2)}=-1.
\end{equation}
By \eqref{eq7.31} and \eqref{eq7.33}, we obtain $$\frac{\widetilde{\Phi}_{g_1}(g_2,g_3)}{\widetilde{\Phi}_{g_1}(g_3,g_2)}=\Phi(g_1,g_2,g_3)=\zeta_{(m_{1},m_{2},m_{3})}^{c_{123}}=-1.$$
\end{proof}

{\bf Proof of Theorem  \ref{t3.9}}.
Let $\Psi$ and $\Gamma$ be the $3$-cocycles of $G$ given by 
\begin{eqnarray}
&\Psi(g_{1}^{i_{1}}g_{1}^{i_{2}}g_{1}^{i_{3}},g_{1}^{j_{1}}g_{1}^{j_{2}}g_{1}^{j_{3}},g_{1}^{k_{1}}g_{1}^{k_{2}}g_{1}^{k_{3}})=(-1)^{i_1j_2k_3}, \\
&\Gamma(g_{1}^{i_{1}}g_{1}^{i_{2}}g_{1}^{i_{3}},g_{1}^{j_{1}}g_{1}^{j_{2}}g_{1}^{j_{3}},g_{1}^{k_{1}}g_{1}^{k_{2}}g_{1}^{k_{3}})=\prod_{l=1}^{3}\zeta_{m_l}^{c_{l}i_{l}[\frac{j_{l}+k_{l}}{m_{l}}]}
\prod_{1\leq s<t\leq 3}\zeta_{m_{t}}^{c_{st}i_{t}[\frac{j_{s}+k_{s}}{m_{s}}]}.
\end{eqnarray}
By \eqref{eq7.31} and Lemma \ref{l7.8}, we have $\Phi=\Gamma \times \Psi$. By \eqref{eq2.13}, $\Gamma$ is an abelian $3$-cocycle of $G$. Let $\widehat{G}=\langle \mathbbm{g}_1\rangle \times \langle \mathbbm{g}_2\rangle \times \langle \mathbbm{g}_3\rangle$ such that $|\mathbbm{g}_i|=m_i^2, 1\leq i\leq 3$, and $\pi\colon \widehat{G}\To G$ be the epimorphism determined by 
\begin{equation}
\pi(\mathbbm{g}_i)=g_i, \ 1\leq i\leq 3.
\end{equation}
Let $\iota\colon G\To \widehat{G}$ be the section of $\pi$ given by 
\begin{eqnarray}
\iota(g_i^l)=\mathbbm{g}_i^l,\ \ 0\leq l<m_i.
\end{eqnarray}
Then we have an object $\widetilde{V}=\widetilde{V}_1\oplus \widetilde{V}_2\oplus \widetilde{V}_3 \in {^{\k \widehat{G}}_{\k \widehat{G}} \mathcal{YD}^{\pi^*\Phi}}$ defined by \eqref{eq3.1}-\eqref{eq3.2}, and $B(\widetilde{V})\cong B(V)$ by Lemma \ref{l2.8}. 
By Proposition \ref{p2.6}, $\pi^*\Gamma$ is a $3$-coboundary of $\widehat{G}$. Let $J$ be the $2$-cochain of $\widehat{G}$ such that $\partial J=\pi^*\Gamma$. So we have 
$$\partial(J^{-1})\cdot \pi^*\Phi=(\pi^*\Gamma)^{-1}\cdot \pi^*(\Psi \Gamma)=\pi^*\Psi.$$
By lemma \ref{l2.6}, we have 
$$B(\widetilde{V})^{J^{-1}}\cong B(\widetilde{V}^{J^{-1}})\in {^{\k \widehat{G}}_{\k \widehat{G}} \mathcal{YD}^{\partial J \cdot \pi^*\Phi}}= {^{\k \widehat{G}}_{\k \widehat{G}} \mathcal{YD}^{\pi^*\Psi}}.$$
Note that $\deg(\widetilde{V}_i^{J^{-1}})=\deg(\widetilde{V}_i)=\mathbbm{g}_i$ for $1\leq i\leq 3$, and 
\begin{equation*}
\begin{split}
&\pi^*\Psi(\mathbbm{g}_{1}^{i_{1}}\mathbbm{g}_{1}^{i_{2}}\mathbbm{g}_{1}^{i_{3}},\mathbbm{g}_{1}^{j_{1}}\mathbbm{g}_{1}^{j_{2}}\mathbbm{g}_{1}^{j_{3}},\mathbbm{g}_{1}^{k_{1}}\mathbbm{g}_{1}^{k_{2}}\mathbbm{g}_{1}^{k_{3}})\\
&=\Psi(\pi(\mathbbm{g}_{1}^{i_{1}}\mathbbm{g}_{1}^{i_{2}}\mathbbm{g}_{1}^{i_{3}}),\pi(\mathbbm{g}_{1}^{j_{1}}\mathbbm{g}_{1}^{j_{2}}\mathbbm{g}_{1}^{j_{3}}),\pi(\mathbbm{g}_{1}^{k_{1}}\mathbbm{g}_{1}^{k_{2}}\mathbbm{g}_{1}^{k_{3}}))\\
&=\Psi(g_{1}^{i_{1}}g_{1}^{i_{2}}g_{1}^{i_{3}},g_{1}^{j_{1}}g_{1}^{j_{2}}g_{1}^{j_{3}},g_{1}^{k_{1}}g_{1}^{k_{2}}g_{1}^{k_{3}})\\
&=(-1)^{k_1j_2i_3}
\end{split}
\end{equation*}
for all $ 0\leq i_l,j_l,k_l<m_l^2, 1\leq l\leq 3$. By Proposition \ref{p7.1}, $B(\widetilde{V})^{J^{-1}}$ is infinite dimensional. So $B(\widetilde{V})$ and $B(V)$ are also infinite dimensional.
 \hfill $\Box$

\section{Finite quasi-quantum groups over abelian groups}

In this section, we will give a classification of finite-dimensional coradically graded pointed coquasi-Hopf algebras over finite abelian groups.

Let $M=\oplus_{i\geq 0}M_i$ be a coradically graded pointed coquasi-Hopf algebra over a finite abelian group $G$. Then $M_0=(\k G,\Phi)$ for a $3$-cocycle $\Phi$ on $G$. Let $R=\oplus_{i\geq 0}R[i]$ be the coinvariant subalgebra of $M$.
With these  notations, we have 
\begin{theorem}\label{t9.1}
Assume that $M$ is finite dimensional. Then coinvariant subalgebra $R$ of $M$ is a Nichols algebra in  ${_{\k G}^{\k G} \mathcal{YD}^\Phi}$.
\end{theorem}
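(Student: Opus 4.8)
The plan is to identify $R$ with the Nichols algebra of its degree-one part $V:=R[1]$; the two nontrivial ingredients will be the paper's main classification theorem, used to force $V$ to be diagonal, and the previously established diagonal case. First I would set up the bosonization: since $M$ is a coradically graded pointed coquasi-Hopf algebra with $M_0=(\k G,\Phi)$, the graded projection $M\To M_0$ splits as coquasi-Hopf algebras and exhibits $M$ as a biproduct $R\#\k G$, where $R=\oplus_{i\ge 0}R[i]$ is a graded braided Hopf algebra in ${_{\k G}^{\k G}\mathcal{YD}^\Phi}$ (the coquasi analogue of Radford's biproduct, as developed in \cite{HLYY2}). Because $\dim M=\dim R\cdot|G|$ and $|G|<\infty$, finite dimensionality of $M$ forces finite dimensionality of $R$.

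Next I would record the low-degree structure. From $M_0=\k G$ one reads off $R[0]=\k$, so $R$ is connected and $\N$-graded, and every element of $R[1]$ is automatically primitive. The essential use of the hypothesis that $M$ is \emph{coradically graded} is that the coradical filtration of $R$ then coincides with its grading; in particular its first term is $\k\oplus R[1]$, which is exactly the assertion that the space of primitives satisfies $P(R)=R[1]=V$. Consequently the braided Hopf subalgebra $\langle V\rangle\subseteq R$ generated by $V$ has trivial degree-zero component, is generated in degree one, and has all of its primitives in degree one, which characterizes it as the Nichols algebra $B(V)$ (Definition \ref{d2.9}). Thus $B(V)\subseteq R$, and the theorem reduces to the reverse inclusion $R\subseteq B(V)$, i.e. to the statement that $R$ is generated in degree one.

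The reduction to the diagonal situation comes next. As a subalgebra of the finite-dimensional algebra $R$, the Nichols algebra $B(V)$ is finite dimensional, so Corollary \ref{c3.15} (equivalently Theorem \ref{t3.13}) applies: $\Phi|_{G_V}$ is abelian and $V$ has a standard basis, i.e. $V$ is of diagonal type. Hence $M$ is of diagonal type in the sense of the introduction, and by Lemma \ref{l2.8} and Proposition \ref{p2.11} the whole structure can be transported, via change of based groups and a $2$-cochain twist, into an ordinary (untwisted) Yetter--Drinfeld category ${_{\k \widehat{G_V}}^{\k \widehat{G_V}}\mathcal{YD}}$, in which $B(V)$ becomes a Nichols algebra of diagonal type.

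What remains, the equality $R=B(V)$, is the generation-in-degree-one assertion and is the heart of the argument; it is precisely the abelian case of the Etingof--Gelaki--Nikshych--Ostrik generation conjecture. I would finish it by appealing to the diagonal result already established in \cite{HLYY,HLYY2}: a finite-dimensional coradically graded pointed coquasi-Hopf algebra of diagonal type is of the form $B(V)\#\k G$, whence $R=B(V)$. The main obstacle is exactly this last step, since \emph{a priori} $R$ could acquire new generators in degrees $\ge 2$; the strategy neutralizes it by first eliminating every nondiagonal possibility through the paper's main theorem and then invoking the known diagonal generation. The delicate points to verify are that the change of based groups and the twist preserve both the $\N$-grading and the identity $P(R)=R[1]$, so that the diagonal generation statement may legitimately be pulled back to conclude $R=B(V)$ in the original category ${_{\k G}^{\k G}\mathcal{YD}^\Phi}$.
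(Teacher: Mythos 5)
Your proposal is correct and takes essentially the same route as the paper: both arguments deduce that $B(R[1])$ is finite dimensional from the finite dimensionality of $R$ (the paper views $B(R[1])$ as a subquotient of $R$, while you realize it as the subalgebra generated by $R[1]$ using $P(R)=R[1]$), then apply Corollary \ref{c3.15} to conclude that $\Phi$ is abelian on the support group, and finally invoke the diagonal-type generation result of \cite{HLYY2} (Proposition 5.1 there) to obtain $R\cong B(R[1])$. The only point the paper makes explicit that you pass over silently is the equality $G_{R[1]}=G_R$, which holds because $R$ is coradically graded and is needed so that diagonality of $V=R[1]$ really does make all of $R$ (hence $M$) diagonal before the result of \cite{HLYY2} is cited.
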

\begin{proof}
First we have $G_{R[1]}=G_{R}$ for $R$ is coradically graded. Since $R$  is finite dimensional, we have $B(R[1])$ is also finite dimensional since $B(R[1])$ is a subquotient of $R$. This implies that $\Phi_{G_{R[1]}}$ is an abelian $3$-cocycle on $G_{R[1]}$
by Corollary \ref{c3.15}. So we have $R\cong B(R[1])$ by \cite[Proposition 5.1]{HLYY2}.
\end{proof}

Now we can give a classification of finite-dimensional coradically graded pointed coquasi-Hopf algebras over finite abelian groups.

\begin{theorem}\label{t5.2}
\begin{itemize}
\item[(1).] Let $V\in {_{\k G}^{\k G} \mathcal{YD}^\Phi}$ be a Yetter-Drinfeld module of finite type. Then $ B(V)\#\k G$ is a finite-dimensional pointed coquasi-Hopf algebra.
\item[(2).] Let $M$ be a finite-dimensional coradically graded pointed coquasi-Hopf algebra over a finite abelian group, $M_0=(G,\Phi)$. Then we have $M\cong B(V)\#\k G$ for a Yetter-Drinfeld module of finite type 
$V\in {_{\k G}^{\k G} \mathcal{YD}^\Phi}$.
\end{itemize}
\end{theorem}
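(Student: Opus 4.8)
The plan is to deduce both statements from the two structural results already in hand, namely Theorem \ref{t8.8}, which characterizes finite-dimensionality of a Nichols algebra $B(V)$ by the finite-type condition on $V$, and Theorem \ref{t9.1}, which identifies the coinvariant subalgebra of a finite-dimensional coradically graded pointed coquasi-Hopf algebra as a Nichols algebra, together with the bosonization (Radford biproduct) formalism for Hopf algebras in the twisted Yetter-Drinfeld category ${_{\k G}^{\k G}\mathcal{YD}^\Phi}$. Everything reduces to threading these ingredients in the right order and handling the one genuinely coquasi feature, the nontrivial associator $\Phi$.

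For part (1), since $V$ is of finite type, Theorem \ref{t8.8} gives that $B(V)$ is finite dimensional. As $B(V)$ is a Hopf algebra in ${_{\k G}^{\k G}\mathcal{YD}^\Phi}$, its bosonization $B(V)\#\k G$ carries a natural coquasi-Hopf algebra structure whose associator is inherited from the $3$-cocycle $\Phi$, and its dimension is $\dim B(V)\cdot|G|$, hence finite. It then remains to check pointedness: the coradical filtration of $B(V)\#\k G$ is the one induced by the $\N$-grading coming from $B(V)$, so the coradical is $\k G$, all of whose simple comodules are one-dimensional, and therefore $B(V)\#\k G$ is pointed.

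For part (2), let $R=\oplus_{i\geq 0}R[i]$ be the coinvariant subalgebra of the finite-dimensional $M$ and put $V=R[1]$. Being a subobject of $M$, the algebra $R$ is finite dimensional, so Theorem \ref{t9.1} yields $R\cong B(V)$, and Theorem \ref{t8.8} then forces $V$ to be of finite type. To conclude $M\cong B(V)\#\k G$ I would invoke the reconstruction theorem for coradically graded pointed coquasi-Hopf algebras: the coalgebra projection $M\to M_0=\k G$ splits the inclusion $\k G\hookrightarrow M$, and the standard biproduct argument recovers $M$ from its diagram $R$ and the group datum $(G,\Phi)$ as the smash product $R\#\k G$. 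The main obstacle is precisely this last reconstruction step: unlike the ordinary Hopf case, the presence of the nontrivial associator $\Phi$ means one must verify that the projection/injection pair is compatible with the quasi-coassociativity constraints, so that $R$ genuinely lives in ${_{\k G}^{\k G}\mathcal{YD}^\Phi}$ and the biproduct faithfully reassembles $M$. This is the content of the coquasi-analog of Radford's theorem, which I would take from \cite{HLYY2}, the same source that underlies the proof of Theorem \ref{t9.1}.
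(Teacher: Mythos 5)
Your proposal is correct and follows essentially the same route as the paper: part (1) is Theorem \ref{t8.8} plus finiteness of the bosonization, and part (2) uses the biproduct decomposition $M=R\#\k G$, Theorem \ref{t9.1} to identify $R\cong B(V)$ with $V=R[1]$, and Theorem \ref{t8.8} again to conclude $V$ is of finite type. The only differences are cosmetic: you spell out the pointedness check in (1) and the associator-compatibility of the Radford-type reconstruction in (2), both of which the paper leaves implicit (relying on \cite{HLYY2}).
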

\begin{proof}
(1). It follows from Theorem \ref{t8.8} that $B(V)$ is finite dimensional, thus $B(V)\#\k G$ is finite dimensional. 

(2). Let $R$ be the coinvariant subalgebra of $M$. Then $M= R\#\k G$. Let $V=R[1]$. Then by Theorem \ref{t9.1},  $R=B(V)$ is a Nichols algebra in  ${_{\k G}^{\k G} \mathcal{YD}^\Phi}$. So $M=B(V)\#\k G$ for the Yetter-Drinfeld module $V$, and $V$ is of finite type since $M$ and hence $B(V)$ is finite dimensional.
\end{proof}

Finally, we will consider the generation problem of pointed finite tensor categories. We partially prove the following conjecture due to Etingof, Gelaki, Nikshych and Ostrik.
\begin{conjecture}
Every pointed finite tensor categories over a field of characteristic zero is tensor generated by objects of length two.
\end{conjecture}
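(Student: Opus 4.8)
The plan is to recast the statement in coquasi-Hopf terms and then attempt the very reduction that works in the abelian case, isolating precisely the step at which the nonabelian hypothesis becomes indispensable. First I would invoke the Tannakian formalism of \cite{EGNO} to replace $\mathcal{C}$ by a finite-dimensional pointed coquasi-Hopf algebra $H$ whose coradical is $H_0=(\k G,\Phi)$, where now $G=G(\mathcal{C})$ is an \emph{arbitrary} finite group and $\Phi$ a $3$-cocycle on $G$; here the twisted Yetter--Drinfeld category over $(\k G,\Phi)$ is understood as the categorical center $\mathcal{Z}(\Vec_G^\Phi)$, which is defined for nonabelian $G$ as well. Under this dictionary simple objects correspond to elements of $G$ and length-$2$ objects to the degree-$1$ part of the coradical filtration, so ``$\mathcal{C}$ is tensor generated by objects of length $2$'' is equivalent to ``$\gr H$ is generated in degrees $0$ and $1$''. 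Passing to the coinvariant subalgebra $R=\bigoplus_{i\ge 0}R[i]$ of $\gr H$, a connected graded braided Hopf algebra in ${_{\k G}^{\k G}\mathcal{YD}^\Phi}$, the problem becomes: show that $R$ is generated in degree $1$, i.e. $R[n]=R[1]\cdot R[n-1]$ for all $n\ge 2$. As in the abelian case, I would aim for the sharper conclusion that $R$ coincides with the Nichols algebra $B(R[1])$, which in particular forces generation in degree $1$.

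Next I would run the nonabelian analog of the reduction of Section~2. Using a change of based group I would choose a finite group $\widehat{G}$ with an epimorphism $\pi\colon\widehat{G}\to G$ and a section, transporting $R$ to a corresponding braided Hopf algebra $\widetilde{R}$ in ${_{\k\widehat G}^{\k\widehat G}\mathcal{YD}^{\pi^*\Phi}}$ with $\widetilde{R}[1]=\widetilde{R[1]}$. The first genuinely new input is a nonabelian version of Proposition~\ref{p2.6}: since $\operatorname{H}^3(G,\k^*)$ is finite, the class of $\Phi$ is torsion, and one expects to be able to choose $\widehat{G}$ (for instance a suitable finite central cover of $G$) so that $\pi^*\Phi=\partial J$ is a coboundary. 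Twisting by $J^{-1}$ (the analog of Lemma~\ref{l2.6}) would then realize $R$, up to twist equivalence, as a braided Hopf algebra in the \emph{untwisted} category ${_{\k\widehat G}^{\k\widehat G}\mathcal{YD}}$, i.e. as an ordinary Yetter--Drinfeld module over the finite group $\widehat G$. Because both change of based group and twisting are monoidal constructions, they preserve the $\N$-grading and hence the property of being generated in degree $1$; extending Lemmas~\ref{l2.6}--\ref{l2.8} from Nichols algebras to arbitrary coinvariant subalgebras is routine and formal.

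At this point the question reduces to a purely Hopf-algebraic one: a finite-dimensional coinvariant subalgebra in ${_{\k\widehat G}^{\k\widehat G}\mathcal{YD}}$ over the (still nonabelian) group $\widehat G$ is generated in degree $1$. This is exactly the Andruskiewitsch--Schneider generation problem over nonabelian groups, and it is where I expect essentially all the difficulty to lie; everything preceding it is a formal transcription of the abelian machinery.

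The main obstacle is precisely this last step. In the abelian case the paper closes it by transporting everything to a \emph{diagonal} braiding over an enlarged abelian group and invoking Heckenberger's classification of arithmetic root systems \cite{H4} (this is how Corollary~\ref{c3.15} and Theorem~\ref{t9.1} are obtained). Over a nonabelian $\widehat G$ the braiding on $R[1]$ is no longer diagonal, Weyl groupoids and arithmetic root systems are unavailable, and there is at present no complete classification of finite-dimensional Nichols algebras. A full proof of the conjecture as stated therefore requires either completing that classification in the nonabelian setting, or finding a classification-free argument --- for example a cohomological vanishing statement bounding the degrees in which new generators and primitive elements can occur, or a lifting/deformation argument showing the defining ideal of $R$ is generated in degree $2$. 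Lacking such an input, the present paper proves only the case in which $G(\mathcal{C})$ is abelian (Theorem~\ref{t5.5}), where the reduction to diagonal type makes Heckenberger's classification directly applicable.
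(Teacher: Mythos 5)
You have read the situation correctly: the statement you were given is labelled a \emph{conjecture} in the paper (it is due to Etingof, Gelaki, Nikshych and Ostrik), and the paper does not prove it. What the paper proves is Theorem \ref{t5.5}, the special case in which $G(\mathcal{C})$ is abelian, and your proposal is an accurate account of exactly this state of affairs. Where your outline overlaps with the paper it matches it: the Tannakian step and the equivalence ``tensor generated in length $\le 2$ $\Leftrightarrow$ generated by group-likes and skew-primitives'' is Proposition \ref{p9.4}; the passage to the coinvariant subalgebra, change of based group, and twisting are the paper's Lemmas \ref{l2.6}--\ref{l2.8} and Proposition \ref{p2.6}; and the closing of the abelian case via reduction to diagonal braidings and Heckenberger's classification \cite{H4} is exactly how Corollary \ref{c3.15}, Theorem \ref{t9.1} and Theorem \ref{t5.2} are obtained. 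The paper's actual proof of Theorem \ref{t5.5} is packaged slightly differently from your sketch: it first base-changes $M$ to the algebraic closure of its ground field (a step you omit, and which matters because the conjecture is stated over an arbitrary field of characteristic zero), then applies Theorem \ref{t5.2}(2) to get $\gr(\widetilde{M})\cong B(V)\#\k G$, and reads off generation in degrees $0$ and $1$ from the bosonization.

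Two caveats on the nonabelian part of your outline. First, your ``expected'' nonabelian analogue of Proposition \ref{p2.6} --- that $\pi^*\Phi$ becomes a coboundary on a suitable finite cover $\widehat{G}$ --- is in fact true for every finite group, but it does not follow merely from the class being torsion; one genuine proof presents $G$ as a quotient of a free group $F$ and uses vanishing of degree-$3$ continuous cohomology of the free profinite completion with finite coefficients, so that the pulled-back class already dies on some finite quotient of $F$ surjecting onto $G$. No such argument appears in the paper, so this step would have to be supplied, not assumed. Second, granting that lemma and the (plausible, but unwritten) extension of the transport lemmas from Nichols algebras to arbitrary coinvariant subalgebras, your reduction shows that the EGNO conjecture for pointed finite tensor categories would follow from the Andruskiewitsch--Schneider generation problem over arbitrary finite groups; that problem is open, which is precisely why the statement remains a conjecture. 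So the only gap in your proposal is the one you yourself flag, and it is a gap the paper shares: neither you nor the paper proves the statement beyond the abelian case.
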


In fact, this conjecture can be viewed as a generalization of Andruskiewitsch-Schneider conjecture. Let $\mathcal{C}$ be a pointed finite tensor categories. Then it is well known that $\mathcal{C}\cong \comod(M)$ for a finite-dimensional pointed coquasi-Hopf algebra $M$, see \cite{EO} for details. In \cite{HYZ}, we prove the following proposition.

\begin{proposition}\cite[Proposition 4.10]{HYZ}\label{p9.4}
Let $\mathcal{C}$ be a pointed finite tensor category, and $M$ a finite-dimensional pointed coquasi-Hopf algebra such that $\mathcal{C}\cong \comod(M)$. Then $\mathcal{C}$ is tensor generated by objects of length two if and only if $M$ is generated by 
group-like elements and skew-primitive elements.
\end{proposition}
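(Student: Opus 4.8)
The plan is to translate both conditions through the coefficient-coalgebra dictionary relating tensor subcategories of $\comod(M)$ to sub-coquasi-Hopf-algebras of $M$, and to pin both sides down in terms of the coradical filtration $M_0\subset M_1\subset M_2\subset\cdots$ of $M$. Throughout I write $G(M)$ for the group-like elements of $M$ and recall that, $M$ being pointed, $M_0=\k G(M)$.

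First I would record the object side. Since $M$ is pointed, its simple comodules are one-dimensional and correspond bijectively to the elements of $G(M)$; these are exactly the objects of length $1$, and they are invertible in $\comod(M)$. An indecomposable object of length $2$ is an extension of two such simples, and its matrix coefficients lie in $M_1$. Conversely $M_1=\k G(M)\oplus P$, where $P$ is spanned by the (nontrivial) skew-primitive elements, and each skew-primitive is realized as an off-diagonal matrix coefficient of a length-$2$ comodule. Hence the coefficient coalgebras of all objects of length $\le 2$ together span precisely $M_1$, and $M_1$ is spanned by the group-like elements together with the skew-primitive elements.

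Next I would set up the algebra side. To a tensor subcategory $\mathcal{D}\subseteq\comod(M)$ closed under subquotients and duals one attaches its coefficient subspace $C(\mathcal{D})\subseteq M$, the sum of the coefficient coalgebras of the objects of $\mathcal{D}$; this is a sub-coquasi-Hopf-algebra of $M$, and $\mathcal{D}\mapsto C(\mathcal{D})$ is an inclusion-preserving bijection onto such subobjects (the coquasi analogue of Tannakian reconstruction, whereby $M$ is recovered from $\comod(M)$). The essential computation is that the coefficient coalgebra of a tensor product $V\otimes W$ is the product $C_V\cdot C_W$ inside $M$; because the associator $\Phi$ is scalar-valued and convolution-invertible, the different parenthesizations of an iterated product differ only by invertible scalars and therefore span the same subspace, so that $C(-)$ of the tensor subcategory generated by a family of objects is exactly the subalgebra of $M$ multiplicatively generated by the coefficient coalgebras of that family. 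Note also that, since comultiplication is multiplicative and the coradical filtration is stable under $S$, this subalgebra is automatically a subcoalgebra stable under the quasi-antipode, hence a genuine sub-coquasi-Hopf-algebra.

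Combining the two sides, the tensor subcategory generated by the objects of length $\le 2$ corresponds under $C(-)$ to the subalgebra of $M$ generated by $M_1$. Thus $\mathcal{C}$ is tensor generated by objects of length two if and only if the subalgebra generated by $M_1$ is all of $M$, which by the description of $M_1$ in the first step is equivalent to $M$ being generated as an algebra by its group-like and skew-primitive elements. I expect the main obstacle to lie in the third paragraph: verifying that the coefficient-coalgebra correspondence is a genuine bijection respecting the monoidal structure when the multiplication of $M$ is only quasi-associative, and confirming that ``the subalgebra generated by $M_1$'' is unambiguous despite this nonassociativity (this is exactly where the scalar nature of $\Phi$ is used). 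Once that dictionary is in place the stated equivalence is formal.
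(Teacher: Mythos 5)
The paper itself does not prove this proposition: it is imported verbatim from \cite{HYZ} (Proposition 4.10 there), so there is no internal proof to compare against; your proposal has to be judged on its own merits, and on those it is sound and is the natural Tannakian argument one would expect (and that \cite{HYZ} uses). The three pillars are all correct: (i) for a pointed coalgebra, $M_1$ is spanned by group-likes and skew-primitives (Taft--Wilson), and these are exactly the matrix coefficients of comodules of length at most two; (ii) the coaction on $V\otimes W$ over a coquasi-bialgebra is $(v\otimes w)\mapsto (v_{(0)}\otimes w_{(0)})\otimes v_{(1)}w_{(1)}$, so $C_{V\otimes W}=C_V\cdot C_W$; (iii) $C_V\subseteq C_W$ if and only if $V$ is a subquotient of a finite direct sum of copies of $W$. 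One phrase deserves tightening: it is not literally true that different parenthesizations ``differ by invertible scalars'' element-wise; the correct mechanism is the convolution identity $(ab)c=\Phi^{-1}(a_1,b_1,c_1)\,a_2(b_2c_2)\,\Phi(a_3,b_3,c_3)$, which, because the factors range over \emph{subcoalgebras} $C_U,C_V,C_W$ (so that applying $\Delta$ twice stays inside them), yields the equality of subspaces $(C_UC_V)C_W=C_U(C_VC_W)$; this is precisely what makes ``the subalgebra generated by $M_1$'' unambiguous. Finally, closure under duals costs nothing here since the quasi-antipode is a coalgebra antimorphism preserving the coradical filtration, whence $S(M_1)\subseteq M_1$.
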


With the help of Theorem \ref{t5.2} and Proposition \ref{p9.4}, we can prove the following theorem.

\begin{theorem}\label{t5.5}
Let $\mathcal{C}$ be a pointed finite tensor category over a field of characteristic zero such that $G(\mathcal{C})$ is an abelian group. Then $\mathcal{C}$ is tensor generated by objects of length two.
\end{theorem}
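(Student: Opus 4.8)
The plan is to transport the statement into the language of coquasi-Hopf algebras, feed it into the structural result of Theorem \ref{t5.2}, and then lift a generation property from the associated graded algebra to the algebra itself. First I would realize $\mathcal{C}$ as a comodule category: since $\mathcal{C}$ is a pointed finite tensor category, by \cite{EO} there is a finite-dimensional pointed coquasi-Hopf algebra $M$ with $\mathcal{C}\cong\comod(M)$, and the group of group-like elements of $M$ is precisely $G(\mathcal{C})$. By hypothesis this group is abelian, so $M_0=(\k G,\Phi)$ for the finite abelian group $G=G(\mathcal{C})$ and some $3$-cocycle $\Phi$. (If $\k$ is not algebraically closed one first extends scalars to $\bar{\k}$: both tensor generation and the abelianness of the group of invertible simple objects are insensitive to this faithfully flat base change, so it suffices to treat the algebraically closed case covered by the earlier results.) By Proposition \ref{p9.4}, it now suffices to prove that $M$ is generated as an algebra by its group-like and skew-primitive elements.

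Next I would pass to the coradically graded algebra $\gr M$ attached to the coradical filtration $M_0\subset M_1\subset\cdots$. Since $\gr M$ is a finite-dimensional coradically graded pointed coquasi-Hopf algebra over $G$, Theorem \ref{t5.2}(2) yields $\gr M\cong B(V)\#\k G$ for a Yetter-Drinfeld module $V\in{_{\k G}^{\k G}\mathcal{YD}^\Phi}$ of finite type, where $V=R[1]$ and $R=\bigoplus_{i\ge 0}R[i]$ is the coinvariant subalgebra. As $B(V)$ is a Nichols algebra, it is generated in degree one by $V$; hence $\gr M$ is generated as an algebra by $\k G$ together with the degree-one component $M_1/M_0$. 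In other words, $\gr M$ is generated in filtration degrees $0$ and $1$.

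It then remains to lift this to $M$. Concretely, generation of $\gr M$ in degrees $\le 1$ means that for every $n$ the multiplication surjects onto $M_n/M_{n-1}$ from products of $n$ elements of $M_1/M_0$ together with $M_0$; equivalently $M_n=M_{n-1}+(M_1)^{n}$, where $(M_1)^{n}$ denotes the span of all $n$-fold, arbitrarily bracketed, products of elements of $M_1$. An induction on $n$, using $M=\bigcup_n M_n$ and finite-dimensionality, shows that the smallest subspace of $M$ containing $M_1$ and closed under the multiplication of $M$ is all of $M$. Finally, the Taft--Wilson description of the pointed coalgebra $M$ gives $M_1=\k G\oplus P$ with $P$ spanned by skew-primitive elements, so $M$ is generated by group-like and skew-primitive elements, and Proposition \ref{p9.4} completes the proof.

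The genuinely hard input is already packaged in Theorem \ref{t5.2}, whose proof rests on the classification culminating in Theorem \ref{t3.1}; within the present argument the only delicate point is the graded-to-filtered lifting, which must be carried out for the quasi-associative multiplication of a coquasi-Hopf algebra rather than for an ordinary algebra. Here I would be careful to first record that $M_iM_j\subseteq M_{i+j}$ — which holds because the multiplication of $M$ is a morphism of coalgebras and the coradical filtration is the associated coalgebra filtration — so that the inductive step remains valid no matter how the products are parenthesized.
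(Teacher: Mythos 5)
Your proposal is correct and follows essentially the same route as the paper's proof: realize $\mathcal{C}\cong\comod(M)$, reduce via Proposition \ref{p9.4} to showing $M$ is generated by group-like and skew-primitive elements, extend scalars to an algebraically closed field, apply Theorem \ref{t5.2} to the coradically graded algebra, and lift generation in degrees $0$ and $1$ from $\gr M$ back to $M$. The only difference is one of detail: you spell out the graded-to-filtered lifting (the inductive argument, the fact that $M_iM_j\subseteq M_{i+j}$ despite quasi-associativity, and the Taft--Wilson identification of $M_1$), which the paper compresses into a single ``hence.''
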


\begin{proof}
Let $M$ be a finite-dimensional pointed coquasi-Hopf algebra such that $\mathcal{C}\cong \comod(M)$. By Proposition \ref{p9.4}, we only need to show that $M$ is generated by group-like elements and skew-primitive elements. Let $k$ be the based field of $M$,
$K$ the algebraically closure of $k$. Let $\widetilde{M}=M\otimes_{k}K$. Then $\widetilde{M}$ is also a pointed coquasi-Hopf algebra with the structure induced from that of $M$, and it is obvious that $M$ is generated by group-like elements and skew-primitive elements if and only if $\widetilde{M}$ is generated by group-like elements and skew-primitive elements. On the other hand, we have $\gr(\widetilde{M})\cong B(V)\#\k G$ by Theorem \ref{t5.2}, where $G=G(\mathcal{C})$ and $V$ is a Yetter-Drinfeld module of finite type in ${_{\k G}^{\k G} \mathcal{YD}^\Phi}$ for some $3$-cocyce $\Phi$ on $G$. Thus $\gr(\widetilde{M})$, and hence $\widetilde{M}$ are generated by group-like elements $G$ and skew-primitive elements $V$.
Therefore $M$ is generated by group-like elements and skew-primitive elements.
\end{proof}

 \section*{acknowledgments}
This work is supported by the National Natural Science Foundation of China (Nos. 12271243, 12371037, 12131015, 12161141001 and 12371042), the Fundamental Research Funds for the Central Universities (No. SWU-XDJH202305), the Innovation Program for Quantum Science and Technology (No. 2021ZD0302902), and the Natural Science Foundation of Chongqing (No. cstc2021 jcyj-msxmX0714). The first author is also supported by Fuzhou-Xiamen-Quanzhou National Independent Innovation Demonstration Zone Collaborative Innovation Platform under Grant 2022FX5.

\vskip 2pt

\end{document}